\newtheorem{thm}{Theorem}[section]
\newtheorem{lemma1}[thm]{Lemma}
\newtheorem{cor}[thm]{Corollary}
\newtheorem{example}[thm]{Example}
\newtheorem{definition1}[thm]{Definition}
\newtheorem{remark}[thm]{Remark}
\newtheorem{prop}[thm]{Proposition}
\begin{document}

\title{Combinatorial approaches to Hopf bifurcations in systems in interacting elements}

\author{David Angeli\thanks{Imperial College London, Department of Electrical and Electronic Engineering.}
\and Murad Banaji\thanks{University of Portsmouth, Department of Mathematics. Corresponding author: {\tt murad.banaji@port.ac.uk}.}
\and Casian Pantea\thanks{West Virginia University, Department of Mathematics.}
}\maketitle

\begin{abstract}
We describe combinatorial approaches to the question of whether families of real matrices admit pairs of nonreal eigenvalues passing through the imaginary axis. When the matrices arise as Jacobian matrices in the study of dynamical systems, these conditions provide necessary conditions for Hopf bifurcations to occur in parameterised families of such systems. The techniques depend on the spectral properties of additive compound matrices: in particular, we associate with a product of matrices a signed, labelled digraph termed a $\mathrm{DSR}^{[2]}$ graph, which encodes information about the second additive compound of this product. A condition on the cycle structure of this digraph is shown to rule out the possibility of nonreal eigenvalues with positive real part. The techniques developed are applied to systems of interacting elements termed ``interaction networks'', of which networks of chemical reactions are a special case.
\end{abstract}

\begin{keywords}
Hopf bifurcation; compound matrices; interaction networks\\

{\bf MSC.} 15A18; 15A75; 05C90; 34C23; 37C27
\end{keywords}

\section{Introduction}\label{sec:intro}

The material in this paper can be motivated both by abstract questions in linear algebra about the spectra of sets of matrices, and by the study of asymptotic behaviour in dynamical systems. The connection is quite natural: given a sufficiently smooth dynamical system, the structure or spectra of the Jacobian matrices associated with the system may determine certain behaviours, for example the possibility of various local bifurcations, and more generally the possibility of multiple steady states, oscillations or chaos. Given a set $\cal{X}$ and a map $J:{\cal X} \to \mathbb{R}^{n \times n}$, define
\[
{\cal J} = \{J(x)\,:\, x \in {\cal X}\}\, \quad \mbox{and} \quad \mathrm{Spec}\,{\cal J} = \bigcup_{x \in {\cal X}}\mathrm{Spec}\,J(x)\,.
\]
We may ask, for example: 
\begin{enumerate}
\item[Q1.] ``Is $0 \in \mathrm{Spec}\,{\cal J}$?''
\item[Q2.] ``Does $ \mathrm{Spec}\,{\cal J}\backslash\{0\}$ intersect the imaginary axis?''
\item[Q3.] ``Does the nonreal part of $ \mathrm{Spec}\,{\cal J}$ intersect both left and right open half-planes of $\mathbb{C}$?''
\end{enumerate}
and so forth. The best-known special case is where ${\cal J}$ is a ``qualitative class'', namely it consists of all matrices with some sign pattern, in which case Q1 reduces to the well-studied {\em combinatorial} problem of characterising sign nonsingular matrices \cite{brualdi}. By an easy argument, given convex open $Y \subseteq \mathbb{R}^n$, and a $C^1$ function $f:Y \to \mathbb{R}^n$, sign nonsingularity of the Jacobian matrices $Df$ in fact implies injectivity of $f$ \cite{gouze98}. Apart from such applications, exploration of the zero patterns of sign nonsingular matrices has led to a rich combinatorial theory \cite{RobertsonSeymourThomas}.

If ${\cal J}$ arises as the Jacobian matrices of some family of vector fields, a negative answer to either Q2 or Q3 implies that this family does not admit Hopf bifurcation. In this paper it will be Q3 which is of primary interest. 

The philosophical approach taken is quite analogous to that taken in the study of sign nonsingularity. As a general principle, when posing some question about a set of matrices $\mathcal{J}$, one hopes to associate with elements of $\mathcal{J}$ discrete objects such as graphs which are constant or vary little over $\mathcal{J}$, and then reduce the question to some finite computation on these objects. The most obvious example is again when $\mathcal{J}$ is a qualitative class, which can naturally be associated with a signed digraph, or a signed bipartite graph. However, more generally, products of matrices which have constant sign pattern (or some of which are constant) also arise naturally in applications. To see why it is worth going beyond qualitative classes, we introduce the notion of an ``interaction network'' as described in \cite{banajicraciun2}. 

\subsection{Interaction networks} 

Consider a system consisting of $n$ species with values (amounts, concentrations, populations, etc.) $x_1, \ldots, x_n \in X \subseteq \mathbb{R}$. Define $x = (x_1, \ldots, x_n)^t \in X^n$ to be the state of the system. Suppose that amongst these species there are $m$ interactions, each involving some nonempty subset of the species, and occurring at rates $v_i:X^n \to \mathbb{R}$ ($i = 1, \ldots, m$) dependent on the state $x$, but independent of time. Define the rate function $v:X^n \to \mathbb{R}^m$ by $v(x) = (v_1(x), \ldots, v_m(x))^t$. Finally, for $i = 1, \ldots, n$, let $f_{i}:\mathbb{R}^m \to \mathbb{R}$ describe the total rate of change of species $i$ as a function of the interaction rates. The evolution of the system is then given by:
\begin{equation}
\label{genprodform}
\dot x_i = f_{i}(v(x)), \qquad i = 1, \ldots, n.
\end{equation}
or more briefly, defining $f:\mathbb{R}^m \to \mathbb{R}^n$ by $f(y) = (f_1(y), \ldots, f_n(y))^t$,
\begin{equation}
\label{genprodform1}
\dot x = f(v(x)).
\end{equation}
Assume that $f$ and $v$ are $C^1$ functions, so that by the chain rule, 
\begin{equation}
\label{decomp}
D(f(v(x)) = Df(v(x))Dv(x).
\end{equation}
For our purposes here, the important point is that the right hand side of an interaction network (\ref{genprodform1}) is defined as a composition of functions, and hence its Jacobian matrices have a natural factorisation (\ref{decomp}). This factorisation was the starting point for quite general combinatorial approaches to questions of injectivity and multistationarity in interaction networks in \cite{banajicraciun2}, and the treatment here relies heavily on the ideas first presented there. 

\begin{remark}
(\ref{genprodform1}) is not restrictive since an arbitrary system of autonomous first order differential equations can be represented in this way by choosing, for example, one of $v$ or $f$ to be the identity function. Thus casting a system as an interaction network cannot be done uniquely, and can be seen as a formalism for studying certain questions about the system, rather than a categorisation of the system. However the usefulness of this construction is most apparent when $f$ and $v$ are defined by natural physical constraints as we shall see below. 
\end{remark}

\subsection{Chemical reaction networks}

O.D.E. models of systems of chemical reactions, termed ``chemical reaction networks'' or CRNs, naturally take the interaction network form: in fact (\ref{genprodform1}) reduces to 
\begin{equation}
\label{genprodformCRN}
\dot x = \Gamma v(x).
\end{equation}
where $\Gamma$ is now a constant $n \times m$ matrix, termed the ``stoichiometric matrix'' of the system, and $v(x)$ is the vector of reaction rates. As the state variables are chemical concentrations, the natural state space is the nonnegative orthant in $\mathbb{R}^n$, i.e. $x \in \mathbb{R}^n_{\geq 0}$. The Jacobian matrices become
\[
\Gamma Dv(x). 
\]
Moreover, under mild physical assumptions, the sign pattern of $Dv$ may be related to that of $\Gamma$. For example, if $x \in \mathrm{int}\,\mathbb{R}^n_{\geq 0}$, no chemical occurs on both sides of any reaction, and all chemical reactions are reversible, then we expect $Dv_{ji}$ to have the same sign as $-\Gamma_{ij}$. This will be discussed further below.

\subsection{A roadmap for the paper}

The results will be developed as follows. In Section~\ref{sec:prelim} a considerable volume of preliminary material is gathered for subsequent use. This includes definitions, notation and known results, along with a few new but relatively straightforward lemmas. Section~\ref{sec:C2decomp} constructs an object termed the ``DSR$^{[2]}$ graph'', which is the central tool required for everything to follow. This is followed, in Section~\ref{sec:results1} by a series of claims which can be made immediately from observation of the DSR$^{[2]}$ graph combined with the results in Section~\ref{sec:prelim}, and without the need for deeper theory. Section~\ref{sec:theory} contains technical results on the DSR$^{[2]}$ graph which allow, in Section~\ref{sec:final} some nontrivial extensions of the results of earlier sections. Section~\ref{sec:final} also describes limitations of the theory developed, and suggests the way forward.

Examples are interspersed throughout, and we return to examples to note how the analysis becomes more sophisticated or more rapid as the theoretical tools are developed. Similarly we sometimes prove results which later become corollaries of deeper results; although this adds to the length of the paper, it is intended to make the material more transparent. Remarks linking results here to related published work, and also to the study of CRNs, are interspersed throughout. The frequent reference to CRNs reflects the fact that although these systems are only one possible application for the theory, they were an important motivator for this work.

\begin{section}{Preliminaries}\label{sec:prelim}

\subsection{Some background on matrices}
This paper is concerned with sets of real matrices. These can be naturally identified with subsets of Euclidean space, thus inheriting topological properties such as openness, closedness and connectedness. We begin with the most fundamental of these sets: a matrix $M \in \mathbb{R}^{n \times m}$ determines the {\bf qualitative class} $\mathcal{Q}(M) \subseteq \mathbb{R}^{n \times m}$ consisting of all matrices with the same sign pattern as $M$, i.e., $X \in \mathcal{Q}(M)$ if and only if $(M_{ij} > 0) \Rightarrow (X_{ij} > 0)$; $(M_{ij} < 0) \Rightarrow (X_{ij} < 0)$; and $(M_{ij} = 0) \Rightarrow (X_{ij} = 0)$. The closure of $\mathcal{Q}(M)$ will be written $\mathcal{Q}_0(M)$, while the closure of a more general $\mathcal{M}\subseteq \mathbb{R}^{n \times m}$ will be written $\overline {\mathcal{M}}$. A square matrix $M$ is {\bf sign nonsingular} if all matrices in $\mathcal{Q}(M)$ are nonsingular and {\bf sign singular} if all matrices in $\mathcal{Q}(M)$ are singular \cite{brualdi}.

\begin{remark}
\label{rem:CRNJacobian}
{\bf Jacobian matrices of CRNs.} Returning to (\ref{genprodformCRN}) and the subsequent discussion, the notation developed above allows us to abbreviate the condition ``$Dv_{ji}$ has the same sign as $-\Gamma_{ij}$ for all $i,j$'' as $Dv \in \mathcal{Q}(-\Gamma^t)$. More generally, for systems of reactions where no chemical occurs on both sides of any reaction, we expect the condition
\begin{equation}
\label{eq:CRNJac}
Dv \in \mathcal{Q}_0(-\Gamma^t)\,
\end{equation}
to hold on the entire nonnegative orthant, including the boundary. Even when this does not hold (namely when some chemical occurs on both sides of some reaction), provided every reversible reaction is treated as a pair of irreversible reactions, we still expect $Dv$ to belong to the closure of a qualitative class which can be inferred from the network structure. Thus quite generally, the Jacobian matrices of a CRN take the form of a constant matrix times a matrix belonging to some qualitative class. 
\end{remark}

\subsubsection{Submatrices, minors and terms in a determinant.} Given an $n \times m$ matrix $M$, and nonempty $\alpha \subseteq \{1, \ldots, n\}$, $\beta \subseteq \{1, \ldots, m\}$, define $M(\alpha|\beta)$ to be the submatrix of $M$ obtained by choosing rows of $M$ from $\alpha$ and columns from $\beta$, and (provided $|\alpha| = |\beta|$) $M[\alpha|\beta]$ to be the corresponding minor of $M$. $M[\alpha]$ will be an abbreviation for the principal minor $M[\alpha|\alpha]$. Given an $n \times n$ matrix $M$ and some permutation $\sigma$ of $\{1, \ldots, n\}$ denote by $P(\sigma)$ the sign of this permutation, and by $M_{(\sigma)}$ the term ${P(\sigma)}\Pi_{i=1}^nM_{i\sigma_i}$ in $\mathrm{det}\,M$, so that $\mathrm{det}\,M = \sum_\sigma M_{(\sigma)}$.

\subsubsection{The Cauchy-Binet formula.} Consider an $n \times k$ matrix $A$ and an $k \times m$ matrix $B$ for arbitrary positive integers $n,m,k$. The Cauchy-Binet formula (\cite{gantmacher} for example) tells us that for any nonempty $\alpha \subseteq \{1, \ldots, n\}$, $\beta \subseteq \{1, \ldots, m\}$ with $|\alpha| = |\beta|$:
\begin{equation}
\label{eqCB0}
(AB)[\alpha|\beta] = \sum_{\substack{\gamma \subseteq \{1, \ldots, m\}\\ |\gamma| = |\alpha|}}A[\alpha|\gamma]B[\gamma|\beta]. 
\end{equation}
This formula for the minors of a product is central to many of the matrix and graph-theoretic results developed and cited below. This formula also has a briefer statement in terms of compound matrices mentioned later.

\subsubsection{Matrix spectra and stability.} Given a square matrix $M$, define $\mathrm{Spec}(M)$ to be the multiset of eigenvalues of $M$. Denote by $\mathbb{C}_{-}$ (resp. $\overline{\mathbb{C}_{-}}$) the open (resp. closed) left half-plane of $\mathbb{C}$ with $\mathbb{C}_{+}$ and $\overline{\mathbb{C}_{+}}$ similarly defined. A square matrix $M$ whose spectrum lies entirely in $\mathbb{C}_{+}$ (resp. $\overline{\mathbb{C}_{+}}$) is {\bf positive stable} (resp. {\bf positive semistable}). $M$ is {\bf Hurwitz} if all of its eigenvalues lie in $\mathbb{C}_{-}$, which occurs iff $-M$ is positive stable. {\bf $P$-matrices} are square matrices all of whose principal minors are positive. {\bf $P_0$-matrices} are matrices in the closure of the $P$-matrices, i.e., all of whose principal minors are nonnegative. Here a $P$-matrix (or $P_0$-matrix) will refer to a real matrix. Of particular importance will be the following restrictions on the spectra of $P_0$-matrices and $P$-matrices proved in \cite{kellogg}:
\begin{lemma1}
\label{P0eigenvals}
A complex number $\lambda = re^{i\theta}$ is an eigenvalue of an $n \times n$ $P_0$-matrix if and only if $r=0$ or
\begin{equation}
\label{eg:P0wedge}
|\theta - \pi| \geq \pi/n\,.
\end{equation}
$\lambda$ is an eigenvalue of an $n \times n$ $P$-matrix if and only if $r > 0$ and 
\[
|\theta - \pi| > \pi/n\,.
\]
\end{lemma1}
This implies, in particular that no $P_0$-matrix can have a negative real eigenvalue. Obviously, if $-J$ is a $P_0$-matrix, then $J$ cannot have a positive real eigenvalue. We will also need the following well-known fact about the spectrum of a product of matrices:
\begin{lemma1}
\label{spectranposelemma}
Given $A \in \mathbb{R}^{n \times m}, B \in \mathbb{R}^{m \times n}$, $\mathrm{Spec}(AB)\backslash\{0\} = \mathrm{Spec}(BA)\backslash\{0\}$.
\end{lemma1}

\subsubsection{Compound matrices.} We define $\Lambda^k\mathbb{R}^n$ as the $k$th exterior power of $\mathbb{R}^n$. In particular, $\Lambda^0\mathbb{R}^n = \mathbb{R}$, $\Lambda^1\mathbb{R}^n = \mathbb{R}^n$, and for $k \geq 2$, the elements of $\Lambda^k\mathbb{R}^n$ are finite formal linear combinations of elements of the form
\[
u_1 \wedge u_2\wedge\ldots \wedge u_k
\]
where $u_i \in \mathbb{R}^n$, and ``$\wedge$'' is the wedge-product (see \cite{darling} for example). Let $M$ denote a linear transformation from $\mathbb{R}^m$ to $\mathbb{R}^n$, and also its matrix representation in some basis. $M$ determines transformations from $\Lambda^k\mathbb{R}^m$ to $\Lambda^k\mathbb{R}^n$ in two important ways \cite{li_muldowney_2000}:
\begin{enumerate}
\item $M^{(k)}:\Lambda^k\mathbb{R}^m \to \Lambda^k\mathbb{R}^n$ is defined by:
\[
M^{(k)}(u_1 \wedge \cdots \wedge u_k) = (Mu_1) \wedge \cdots \wedge (Mu_k)\,.
\]
In this case the map $M^{(k)}$ is termed the $k$th exterior power of $M$, or the $k$th multiplicative compound of $M$. Choosing some bases for $\Lambda^k\mathbb{R}^m$ and $\Lambda^k\mathbb{R}^n$, the resulting ${n\choose k} \times {m\choose k}$ matrix will also be denoted $M^{(k)}$. 
\item If $m = n$, $M^{[k]}:\Lambda^k\mathbb{R}^n \to \Lambda^k\mathbb{R}^n$ is defined by:
\[
M^{[k]}(u_1 \wedge \cdots \wedge u_k) = \sum_{i=1}^ku_1 \wedge \cdots (Mu_i) \wedge \cdots \wedge u_k\,.
\]
In this case the map $M^{[k]}$ is termed the $k$th additive compound of $M$. Choosing a basis for $\Lambda^k\mathbb{R}^n$, the resulting matrix is ${n\choose k} \times {n\choose k}$, and will also be denoted $M^{[k]}$. Note that $(-M)^{[2]} = -(M^{[2]})$ so we can write without ambiguity $-M^{[2]}$. 
\end{enumerate}

\begin{remark}
Multiplicative and additive compound matrices can easily be shown to satisfy the relationships $(AB)^{(k)} = A^{(k)}B^{(k)}$ (provided $A$ and $B$ are of dimensions such that $AB$ makes sense) and $(A+B)^{[k]} = A^{[k]}+ B^{[k]}$ (provided $A$ and $B$ are square and of the same dimensions). $(AB)^{(k)} = A^{(k)}B^{(k)}$ is in fact a restatement of the Cauchy-Binet formula in terms of multiplicative compounds. 
\end{remark}

\begin{remark}
When referring to a compound matrix we assume without comment that the dimensions are such that the matrix exists. For example, if $M^{[2]}$ mentioned, it is to be assumed that $M$ is a square matrix of dimension at least $2$. 
\end{remark}

\subsubsection{Spectra of compound matrices.} Compound matrices of a square matrix $M \in \mathbb{R}^{n\times n}$ appear naturally in the study of various questions on the stability of differential equations (see \cite{muldowney} for example). We will refer to some of this theory, but most important here are basic spectral properties of compound matrices: for $k = 1, \ldots, n$, the ${n\choose k}$ eigenvalues of $M^{(k)}$ are precisely {\em products} of all sets of $k$ distinct elements of  $\mathrm{Spec}(M)$, while the ${n\choose k}$ eigenvalues of $M^{[k]}$ are the {\em sums} of all sets of $k$ distinct elements of $\mathrm{Spec}(M)$.
Here it is second additive compound matrices which will be of most interest: if 
$\mathrm{Spec}(M)=\{\lambda_1,\ldots, \lambda_n\}$ then 
$$\mathrm{Spec}(M^{[2]})=\{\lambda_i+\lambda_j:1\le i<j\le n\}.$$
Second additive compound matrices are sometimes presented by means of a {\em bialternate product} \cite{Guckenheimer1997aa}. This reference also demonstrates that second additive compounds are natural objects to consider when studying Hopf bifurcation. For us, the following two observations are important: 

\begin{lemma1}
\label{J2nonsingular}
If $M$ is a real square matrix and $M^{[2]}$ is nonsingular, then $\mathrm{Spec}\,M\backslash\{0\}$ does not intersect the imaginary axis.
\end{lemma1}
\begin{proof}
If $\mathrm{Spec}\,M\backslash\{0\}$ intersects the imaginary axis, then $M$ has a pair of eigenvalues $\pm i \omega$ where $\omega \neq 0$. These sum to zero and hence $M^{[2]}$ is singular. 
\end{proof}

\begin{lemma1}
\label{J2P0}
If $M$ is a real square matrix and $M^{[2]}$ is a $P_0$-matrix, then the nonreal part of $\mathrm{Spec}\,M$ (i.e. $\mathrm{Spec}\,M\backslash \mathbb{R}$)  does not intersect $\mathbb{C}_{-}$. If $-M^{[2]}$ is a $P_0$-matrix, then the nonreal part of $\mathrm{Spec}\,M$ does not intersect $\mathbb{C}_{+}$. 
\end{lemma1}
\begin{proof}
If the nonreal part of $\mathrm{Spec}\,M$ intersects $\mathbb{C}_{-}$, then $M$ has a pair of eigenvalues $a \pm i \omega$ where $a < 0$. These sum to $2a < 0$ and hence $M^{[2]}$ has a negative real eigenvalue, forbidden by Lemma~\ref{P0eigenvals}. The second statement follows immediately since the spectrum of $-M^{[2]}$ is simply the reflection in the imaginary axis of that of $M^{[2]}$. 
\end{proof}

\begin{remark}
\label{rem:adddiag}
If $M^{[2]}$ is a $P_0$-matrix, then in fact Lemma~\ref{J2P0} extends immediately as follows: given any nonnegative diagonal matrix $D$, the nonreal part of $\mathrm{Spec}\,(M+D)$ does not intersect $\mathbb{C}_{-}$. This follows because (i) $(M+D)^{[2]} = M^{[2]} + D^{[2]}$, (ii) $D^{[2]}$ is itself a nonnegative diagonal matrix, and (iii) the set of $P_0$-matrices is closed under the addition of nonnegative diagonal matrices. This fact is of importance in applications to CRNs, where diagonal terms represent arbitrary degradation or outflow of chemicals (see \cite{craciun,banajicraciun}). This will be commented on further below.
\end{remark}

We also have the following two lemmas about stability:
\begin{lemma1}
\label{lemposstable0}
Consider a square matrix $M$. If both $M$ and $M^{[2]}$ are $P_0$-matrices (resp. $P$-matrices), then $M$ is positive semistable (resp. positive stable).
\end{lemma1}
\begin{proof}
Since $M$ is a $P_0$-matrix (resp. $P$-matrix) by Lemma~\ref{P0eigenvals} the real part of $\mathrm{Spec}\,M$ does not intersect $\mathbb{C}_{-}$ (resp. $\overline{\mathbb{C}_{-}}$). Since $M^{[2]}$ is a $P_0$-matrix (resp. $P$-matrix), by Lemma~\ref{J2P0}, the nonreal part of $\mathrm{Spec}\,M$ does not intersect $\mathbb{C}_{-}$ (resp. $\overline{\mathbb{C}_{-}}$). Combining these two facts, $M$ is positive semistable (resp. positive stable).
\end{proof}

\begin{remark}
\label{rem:adddiaga}
Continuing from Remark~\ref{rem:adddiag}, if the conditions of Lemma~\ref{lemposstable0} hold, namely both $M$ and $M^{[2]}$ are $P_0$-matrices (resp. $P$-matrices), then given any nonnegative diagonal matrix $D$, $M + D$ is positive semistable (resp. positive stable). 
\end{remark}

\begin{lemma1}
\label{lemposstable}
Consider a set $\mathcal{M} \subseteq \mathbb{R}^{n \times n}$ which is path-connected. Suppose that for each $M \in \mathcal{M}$, both $M$ and $M^{[2]}$ are nonsingular and that $\mathcal{M}$ includes a positive stable (resp. Hurwitz) matrix. Then $\mathcal{M}$ consists entirely of positive stable (resp. Hurwitz) matrices. 
\end{lemma1}
\begin{proof}
Let $M' \in \mathcal{M}$ be a positive stable (resp. Hurwitz) matrix which exists by assumption. Suppose $\mathcal{M}$ includes a matrix $M''$ which fails to be positive stable (resp. Hurwitz). Since $\mathcal{M}$ is path-connected, and eigenvalues of a matrix depend continuously on the entries of a matrix, there must lie, on any path in $\mathcal{M}$ connecting $M'$ and $M''$, a matrix $M \in \mathcal{M}$ with some eigenvalue on the imaginary axis. However, since both $M$ and $M^{[2]}$ are nonsingular, by Lemma~\ref{J2nonsingular}, $M$ has no eigenvalues on the imaginary axis. 
\end{proof}

\begin{remark}
The stability criterion in Lemma~\ref{lemposstable} is closely related to that of Li and Wang in \cite{li_wang}, and indeed, it is possible to use the spectral properties of compound matrices to develop other stability criteria for sets of matrices which can be more useful in practice than attempting directly to check the Routh-Hurwitz conditions.
\end{remark}

\subsection{Hopf bifurcations} 

A one-parameter family of vector fields $f(x, \mu)$ generically undergoes a Poincar\'e-Andronov-Hopf bifurcation (from here on abbreviated to ``Hopf bifurcation'') if there exists some $(x_0, \mu_0)$ such that $f(x_0, \mu_0) = 0$, $Df(x_0, \mu_0)$ has a pair of imaginary eigenvalues and moreover these eigenvalues cross the imaginary axis as $\mu$ varies through $\mu_0$ (\cite{Wiggins} for example). Thus vector fields can be shown to {\em forbid} Hopf bifurcation either if the nonzero spectrum of each Jacobian matrix avoids the imaginary axis, or the nonreal spectrum of each Jacobian matrix avoids $\mathbb{C}_{-}$ or $\mathbb{C}_{+}$.

Let $\mathbb{R}$ and $\Im$ be the real and imaginary axes in $\mathbb{C}$. As indicated earlier, we are particularly interested in matrix products. In particular, when studying Hopf bifurcation, we are led to the following questions: 
\begin{enumerate}
\item Given $\mathcal{A} \subseteq \mathbb{R}^{n \times m}$ and $\mathcal{B}\subseteq \mathbb{R}^{m \times n}$, is $\mathrm{Spec}(AB)\backslash\{0\} \cap \Im = \varnothing \,\, \mbox{for all} \,\, A \in \mathcal{A}, B \in \mathcal{B}$?
\item Given $\mathcal{A} \subseteq \mathbb{R}^{n \times m}$ and $\mathcal{B}\subseteq \mathbb{R}^{m \times n}$, is $\mathrm{Spec}(AB) \subseteq \mathbb{R} \cup \overline{\mathbb{C}_{+}}\,\, \mbox{for all} \,\, A \in \mathcal{A}, B \in \mathcal{B}$?
\end{enumerate}
Since under certain assumptions, the Jacobian matrix of a CRN is of the form $-AB$ where $B \in \mathcal{Q}_0(A^t)$ (see Remark~\ref{rem:CRNJacobian}), when analysing the behaviour of CRNs we are led to the questions:
\begin{enumerate}
\item Given $A \in \mathbb{R}^{n \times m}$, is $\mathrm{Spec}(AB)\backslash\{0\} \cap \Im = \varnothing\,\, \mbox{for all} \,\, B \in \mathcal{Q}_0(A^t)$?
\item Given $A \in \mathbb{R}^{n \times m}$, is $\mathrm{Spec}(AB) \subseteq \mathbb{R} \cup \overline{\mathbb{C}_{+}}\,\, \mbox{for all} \,\, B \in \mathcal{Q}_0(A^t)$?
\end{enumerate}
In this paper we begin the process of developing tools which provide answers to these questions. 

\begin{remark}
\label{rem:chemHopf}
From Lemma~\ref{J2P0}, the reader may already guess that our main tools for ruling out Hopf bifurcations here will involve proving that $J^{[2]}$ or $-J^{[2]}$ is a $P_0$-matrix for each allowed Jacobian matrix of some system. Continuing the theme of Remark~\ref{rem:adddiag}, if Hopf bifurcation is ruled out for a chemical system with Jacobian matrices $\mathcal{J}$ because $-J^{[2]}$ is a $P_0$-matrix for all $J \in \mathcal{J}$, then it is in fact ruled out even if arbitrary degradation or outflow reactions are added. (Analogous statements follow replacing ``$-J^{[2]}$'' with ``$-J$'' and ``Hopf bifurcation'' with ``saddle-node bifurcation'', but this is well-studied \cite{banajicraciun2,banajiSIAM} and not the theme of this paper.)
\end{remark}

As indicated earlier, second additive compounds arise naturally in the study of Hopf bifurcations, a theme discussed thoroughly in \cite{Guckenheimer1997aa}. In that work, the focus lies on devising continuation methods to detect Hopf bifurcations, with $J^{[2]}$ occurring in algebraic systems which augment the equilibrium condition. On the other hand, our goal here is to present conditions which rely on combinatorial features of $J^{[2]}$ to forbid Hopf bifurcation. These conditions translate into graphical requirements involving {\bf DSR graphs}, which are discussed next.

\subsection{The DSR graph}
\label{sec:DSR}

A class of generalised graphs, sometimes termed SR graphs and DSR graphs, have become useful tools in the study of properties of interaction networks, such as multistationarity and stability \cite{craciun1,banajicraciun,banajicraciun2,angelileenheersontag,craciunPanteaSontag,minchevaroussel}.

DSR graphs for general interaction networks were constructed originally in \cite{banajicraciun2}. The definition presented here involves some minor modifications of that in \cite{banajicraciun2} for ease of presentation and maximum generality. Given $A \in \mathbb{R}^{n \times m}$ and $B \in \mathbb{R}^{m \times n}$, the DSR graph $G_{A, B}$ is defined as follows: we begin with a signed bipartite digraph on $n+m$ vertices, $S_1, \ldots, S_n$, termed S-vertices, and $R_1, \ldots, R_m$, termed R-vertices, with arc $R_jS_i$ if and only if $A_{ij}\not = 0$, and arc $S_iR_j$ if and only if $B_{ji} \not = 0$. The arc $R_jS_i$ is said to have R-to-S orientation and is given the sign of $A_{ij}$; the arc $S_iR_j$ is said to have S-to-R orientation and is given the sign of $B_{ji}$. In addition:
\begin{enumerate}
\item A pair of antiparallel edges of the same sign are treated as a single undirected edge, so that each edge of $G_{A, B}$ either has S-to-R orientation or R-to-S orientation or both, in which case we say that the edge is undirected. When discussing the degree of a vertex, an undirected edge incident into the vertex contributes exactly $1$ to the degree. 
\item It is useful to give the (directed or undirected) edge $R_jS_i$ associated with a nonzero entry $A_{ij}$ the label $|A_{ij}|$. We refer to this label of an edge $e$ as $l(e)$. If $e$ has only S-to-R orientation (i.e. it is associated only with an entry from $B$), then we formally define $l(e) = \infty$. This choice of labelling convention springs from the fact that we can generally consider $A$ to be constant if the system being studied is a CRN (see Remark~\ref{rem:CRNJacobian}). 
\end{enumerate}

\begin{remark}
\label{rem:DSRterminology}
What is here termed $G_{A, B}$ would be $G_{A, B^t}$ in the terminology of \cite{banajicraciun2}. Further, the construction here is presented for a single pair of matrices rather than directly for sets of matrices; note however that if $A$ is fixed and $B$ varies over a qualitative class, then the DSR graph remains constant. On the other hand, if $A$ varies (within a qualitative class), then varying edge labels can be replaced with the formal label $\infty$. 
\end{remark}

\begin{example}\label{figDSR}
Below we illustrate the DSR graph $G_{A,B}$ associated with two matrices $A$ and $B$. Negative edges (corresponding to negative entries in the matrices) are shown as dashed lines, while positive edges are bold lines, a convention which will be followed throughout this paper. To make the construction more transparent, an intermediate stage is shown where separate signed digraphs $G_{A, 0}$ and $G_{0, B}$ are constructed from $A$ and from $B$: $G_{A,B}$ can be regarded as the ``superposition'' of these two digraphs. Edge-labels are associated only with $A$: an edge imported only from $B$ is labelled $\infty$.

\begin{center}

\begin{tikzpicture}
[scale=0.54, place/.style={circle,draw=blue!50,fill=blue!5,thick,inner sep=0pt,minimum size=5.5mm},
place1/.style={circle,draw=white!5,fill=white!9,thick,inner sep=0pt,minimum size=5.5mm},
transition/.style={rectangle,draw=black!50,fill=black!5,thick,inner sep=0pt,minimum size=5mm},
transition1/.style={rectangle,draw=black!0,fill=black!0,thick,inner sep=0pt,minimum size=5mm},
pre/.style={->,shorten <=1pt,shorten >=1pt,>=stealth',semithick},
pres/.style={->,shorten <=-1pt,shorten >=-1pt,>=stealth',semithick},
pre1/.style={<-,shorten <=1pt,shorten >=1pt,>=stealth',semithick},
pres1/.style={<-,shorten <=-1pt,shorten >=0pt,>=stealth',semithick}];

\node at (-4.1,7.5) {$A = \left(\begin{array}{rr}-1 & 3\\0 & 2\\-6 & 1 \end{array}\right),\,\,\, G_{A,0} = $};

\node[place1] (S1) at (1,9) {$S_1$};
\node[transition1] (R2) at (4,9) {$R_2$};
\node[place1] (S2) at (7,9) {$S_2$};
\node[transition1] (R1) at (1,6) {$R_1$};
\node[place1] (S3) at (4,6) {$S_3$};

\draw[pre1,->] (R2) -- (S1);
\draw[pre1,->] (R2) -- (S2);
\draw[pre1,->] (R2) -- (S3);

\draw[pre1,dashed,->] (R1) -- (S1);
\draw[pre1,dashed,->] (R1) -- (S3);

\node at (2.5, 9.3) {$\scriptstyle{3}$};
\node at (2.5, 6.3) {$\scriptstyle{6}$};

\node at (1.3, 7.5) {$\scriptstyle{1}$};
\node at (4.3, 7.5) {$\scriptstyle{1}$};

\node at (5.5, 9.3) {$\scriptstyle{2}$};

\node at (-4.1,2.5) {$B^t = \left(\begin{array}{rr}-6 & 2\\0 & 2\\8 & 0 \end{array}\right),\,\,\, G_{0, B} = $};

\node[place1] (S1) at (1,4) {$S_1$};
\node[transition1] (R2) at (4,4) {$R_2$};
\node[place1] (S2) at (7,4) {$S_2$};
\node[transition1] (R1) at (1,1) {$R_1$};
\node[place1] (S3) at (4,1) {$S_3$};

\draw[pre1,<-] (R2) -- (S1);
\draw[pre1,<-] (R2) -- (S2);

\draw[pre1,dashed,<-] (R1) -- (S1);
\draw[pre1,<-] (R1) -- (S3);

\draw[-, semithick] (8.5, 0.5) .. controls (9,0.5) and (9, 0.5) .. (9,2.5);
\draw[->, semithick] (9, 2.5) .. controls (9,5) and (9, 5) .. (9.5,5);
\draw[-, semithick] (8.5, 9.5) .. controls (9,9.5) and (9, 9.5) .. (9,7.5);
\draw[-, semithick] (9, 7.5) .. controls (9,5) and (9, 5) .. (9.5,5);

\node[place1] (S1) at (11,6.5) {$S_1$};
\node[transition1] (R2) at (14,6.5) {$R_2$};
\node[place1] (S2) at (17,6.5) {$S_2$};
\node[transition1] (R1) at (11,3.5) {$R_1$};
\node[place1] (S3) at (14,3.5) {$S_3$};

\draw[pre1,-] (R2) -- (S1);
\draw[pre1,-] (R2) -- (S2);
\draw[pre1,->] (R2) -- (S3);

\draw[pre1,dashed,-] (R1) -- (S1);

\draw[pre1,->, dashed] (R1) .. controls (12.2,3.9) and (12.8, 3.9) .. (S3);
\draw[pre1,<-] (R1) .. controls (12.2,3.1) and (12.8, 3.1) .. (S3);

\node at (12.5, 6.8) {$\scriptstyle{3}$};
\node at (12.5, 4.2) {$\scriptstyle{6}$};
\node at (12.5, 2.8) {$\scriptstyle{\infty}$};

\node at (11.3, 5) {$\scriptstyle{1}$};
\node at (14.3, 5) {$\scriptstyle{1}$};

\node at (15.5, 6.8) {$\scriptstyle{2}$};

\node at (14, 1) {$G_{A,B}$};

\end{tikzpicture}
\end{center}
\end{example}

\begin{remark}
\label{remsignpat}
A special case relevant to the study of CRNs is when the matrices $A, B$ satisfy $B \in \mathcal{Q}(A^t)$. In this case all edges of $G_{A,B}$ are undirected, and in fact $G_{A,B} = G_{A, A^t}$. This graph can also be referred to as the {\bf SR graph} corresponding to the matrix $A$.  
\end{remark}

To present the main results about DSR graphs, some further definitions are needed. 

\subsubsection{Walks and cycles.} A {\bf walk} $W$ in a digraph is an alternating sequence of vertices and edges, beginning and ending with a vertex, and where each edge in $W$ is preceded by its start-point and followed by its end-point, each vertex (except the first) is preceded by an edge incident into it, and each vertex (except the last) is followed by an edge incident out of it. Here we allow a walk $W$ to repeat both vertices and edges, and its {\bf length} $|W|$ is the number of edges in $W$, counted with repetition. If the first and last vertex are the same, the walk is called {\bf closed}. In what follows we may refer to walks by their sequence of edges, or vertices, or both and we will say that a walk is empty and denote it as $\varnothing$ if it contains no edges (an empty walk may include a single vertex). When used without qualification, ``walk'' will mean a nonempty walk. The {\bf sign} of a walk is defined as the product of signs of the multiset of edges in the walk. The empty walk is formally given the sign $+1$.

A closed nonempty walk which does not repeat vertices (except in the trivial sense that the first and last vertices are the same), is called a {\bf cycle}. Clearly a cycle has no repeated edges. A DSR graph with no cycles will be termed acyclic: note that $(u,v,u)$ is considered a cycle in a DSR graph if and only if $uv$ and $vu$ are directed edges with different signs and so are not treated as a single edge.

\subsubsection{Parity of walks, e-cycles and o-cycles.} As DSR graphs are bipartite objects, each closed walk $W$ has even length. Any walk of even length can be given a parity $P(W)$ defined as:
\[
P(W) = (-1)^{|W|/2}\mathrm{sign}(W)\,.
\]
If $P(W) = 1$, then $W$ is termed {\bf even}, and if $P(W) = -1$, then $W$ is termed {\bf odd}. A cycle $C$ is termed an {\bf e-cycle} if $P(C) = 1$, and an {\bf o-cycle} if $P(C) = -1$.

\subsubsection{Closed s-walks and s-cycles.} A closed walk $W = (e_1, e_2, \ldots, e_{2r})$ is an {\bf s-walk} if each edge $e$ in $W$ has an associated label $l(e) \neq \infty$ and
\[
\prod_{i = 1}^{r}l(e_{2i-1}) = \prod_{i = 1}^{r}l(e_{2i}).
\]
If $W$ is a cycle which is an s-walk, it is termed an {\bf s-cycle}. 

\subsubsection{Intersections of cycles.} A cycle in a DSR graph which includes some edge with only S-to-R orientation or only R-to-S orientation has a unique orientation; otherwise the cycle has two possible orientations. Two oriented cycles in a DSR graph are compatibly oriented, if each induces the same orientation on every edge in their intersection. Two cycles (possibly unoriented) are compatibly oriented if there is an orientation for each so that this requirement is fulfilled. In a DSR graph, two cycles have {\bf odd intersection} if they are compatibly oriented and each component of their intersection has odd length. Note that odd intersection was termed ``S-to-R intersection'' in \cite{craciun1,banajicraciun2}.

\begin{remark}
\label{remoddintersect}
A necessary condition for the existence of two cycles, say $C$ and $D$, with odd intersection in a DSR graph is the existence of at least one S-vertex of degree $3$ or more, and of at least one R-vertex of degree $3$ or more. This follows since the vertices at the ends of a component of the intersection of $C$ and $D$ of odd length must have degree at least $3$, and one of these must be an S-vertex and one an R-vertex.
\end{remark}

\subsubsection{Matchings.} A matching in a DSR graph is a set of edges without common vertices. An S-to-R matching is a matching all of whose edges have S-to-R orientation, and an R-to-S matching is similarly defined. A matching which covers all vertices is perfect.

\subsection{Some useful results on matrix sets and DSR graphs}
\nopagebreak

We will refer to a DSR graph $G$ as {\bf odd} if $G$ has no e-cycles. We will refer to it as {\bf odd$^*$} if all e-cycles in $G$ are s-cycles, and no two e-cycles have odd intersection. It will be called {\bf steady} if all cycles in $G$ are s-cycles. We summarise several important results connecting DSR graphs and $P_0$-matrices in a single theorem. 
\begin{thm}
\label{mainP0thm}
Define the following conditions on matrices $A \in \mathbb{R}^{n \times m}$ and $B \in \mathbb{R}^{m \times n}$ and associated graphs:
\begin{itemize}
\item[{\bf C1.}] $G_{A, B}$ is odd.
\item[{\bf C2.}] $G_{A, B}$ is odd$^*$.
\item[{\bf C3.}] $A'B'$ is a $P_0$-matrix for all $A'\in \mathcal{Q}_0(A)$, $B' \in \mathcal{Q}_0(B)$.
\item[{\bf C4.}] $AB'$ is a $P_0$-matrix for all $B' \in \mathcal{Q}_0(B)$.
\item[{\bf C5.}] $G_{A, A^t}$ is odd.
\item[{\bf C6.}] $G_{A, A^t}$ is odd$^*$.
\item[{\bf C7.}] Every square submatrix of $A$ is either sign nonsingular or sign singular. 
\item[{\bf C8.}] Every square submatrix of $A$ is either sign nonsingular or singular.
\item[{\bf C9.}] $A'C$ is a $P_0$-matrix for all $A'\in \mathcal{Q}_0(A)$, $C \in \mathcal{Q}_0(A^t)$.
\item[{\bf C10.}] $AC$ is a $P_0$-matrix for all $C \in \mathcal{Q}_0(A^t)$.
\end{itemize}
The following implications hold:
\[
\begin{array}{ccc}
C1 & \Leftrightarrow & C3\\
\Downarrow & & \Downarrow\\
C2 & \Rightarrow & C4.
\end{array}
\]
\[
\begin{array}{ccccc}
C5 & \Leftrightarrow & C7 & \Leftrightarrow & C9\\
\Downarrow & & \Downarrow & & \Downarrow\\
C6 & \Rightarrow & C8 & \Leftrightarrow & C10.
\end{array}
\]
\end{thm}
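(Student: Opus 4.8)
\emph{Overall plan.} The implications fall into five immediate ones; the two Cauchy--Binet equivalences $C7\Leftrightarrow C9$ and $C8\Leftrightarrow C10$; the observation that the second diagram results from the first on taking $B=A^t$; and the two substantive implications $C1\Leftrightarrow C3$ and $C2\Rightarrow C4$ of the first diagram. I would present them in that order.

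\emph{Immediate arrows and reductions.} $C1\Rightarrow C2$ and $C5\Rightarrow C6$ hold because with no e-cycles present the two clauses defining odd$^*$ are vacuous; $C3\Rightarrow C4$ and $C9\Rightarrow C10$ are the cases $A'=A\in\mathcal{Q}_0(A)$; and $C7\Rightarrow C8$ holds since a sign singular matrix is singular. Setting $B=A^t$ (so $\mathcal{Q}_0(B)=\mathcal{Q}_0(A^t)$) turns $C1\Leftrightarrow C3$ into $C5\Leftrightarrow C9$ and $C2\Rightarrow C4$ into $C6\Rightarrow C10$. For the Cauchy--Binet equivalences, take $A'\in\mathcal{Q}_0(A)$, $C\in\mathcal{Q}_0(A^t)$ and $|\alpha|=k$; then $(A'C)[\alpha]=\sum_{|\gamma|=k}A'[\alpha|\gamma]\,C[\gamma|\alpha]$, and since $A^t(\gamma|\alpha)$ is the transpose of $A(\alpha|\gamma)$ we have $A^t[\gamma|\alpha]=A[\alpha|\gamma]$. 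Hence each of $A'[\alpha|\gamma]$, $C[\gamma|\alpha]$ carries the sign of $A[\alpha|\gamma]$ or vanishes when $A(\alpha|\gamma)$ is sign nonsingular, and at least one of them vanishes when $A(\alpha|\gamma)$ is sign singular --- for $C10$, where $A'=A$, ``sign singular'' may be weakened to ``singular'' --- which gives $C7\Rightarrow C9$ and $C8\Rightarrow C10$. Conversely, if some $A(\alpha|\gamma)$ is neither sign nonsingular nor sign singular (resp.\ singular), it has two nonzero terms of opposite sign, so the closures of the qualitative classes of $A(\alpha|\gamma)$ and of its transpose contain matrices of determinant of either sign; supporting $A'$ (resp.\ keeping $A$) and $C$ only on the blocks $\alpha\times\gamma$ and $\gamma\times\alpha$ and arranging $A'[\alpha|\gamma]$ and $C[\gamma|\alpha]$ to have opposite signs makes $(A'C)[\alpha]<0$, all other Cauchy--Binet terms being zero. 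Together with $C6\Rightarrow C10\Rightarrow C8$, the whole second diagram follows from the first.

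\emph{The equivalence $C1\Leftrightarrow C3$.} For $C3\Rightarrow C1$, argue by contraposition: from an e-cycle $C$ of $G_{A,B}$ with S-vertex set $\alpha$, choose $A'\in\mathcal{Q}_0(A)$, $B'\in\mathcal{Q}_0(B)$ supported exactly on the edges of $C$ with arbitrary nonzero magnitudes; then $G_{A',B'}$ is the single cycle $C$, so $(A'B')[\alpha]$ reduces to the term obtained by traversing $C$, which by the parity count below has sign $-P(C)=-1$, so $A'B'$ is not a $P_0$-matrix. For $C1\Rightarrow C3$, every cycle of $G_{A',B'}$ (with $A'\in\mathcal{Q}_0(A)$, $B'\in\mathcal{Q}_0(B)$) is a cycle of $G_{A,B}$ of the same length and sign, hence parity, so $G_{A,B}$ odd forces $G_{A',B'}$ odd and it suffices to show that $G_{M,N}$ odd implies $MN$ is a $P_0$-matrix. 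Expanding over permutations $\pi$ of $\alpha$ and maps $\phi:\alpha\to\{1,\ldots,m\}$,
\[
(MN)[\alpha]=\sum_{\pi}P(\pi)\sum_{\phi}\prod_{i\in\alpha}M_{i\phi(i)}N_{\phi(i)\pi(i)};
\]
for fixed $\phi$ the sum over $\pi$ equals $\bigl(\prod_i M_{i\phi(i)}\bigr)\det(N_{\phi(i)i'})_{i,i'\in\alpha}$, which vanishes unless $\phi$ is injective. For injective $\phi$ the edges $R_{\phi(i)}S_i$ and $S_{\pi(i)}R_{\phi(i)}$ partition $\alpha\cup\phi(\alpha)$ into disjoint cycles of $G_{M,N}$, one per cycle of $\pi$, and a cycle $C$ on $\ell$ S-vertices (so $|C|=2\ell$) contributes the factor $(-1)^{\ell-1}\mathrm{sign}(C)=-(-1)^\ell\mathrm{sign}(C)=-P(C)$ (fixed points of $\pi$ give undirected edges, which cannot be $2$-cycles of opposite sign since $G_{M,N}$ is odd, and contribute $+1$). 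As $G_{M,N}$ has no e-cycles, every such $C$ is an o-cycle, so each nonzero term has sign $\prod_C(-P(C))=\prod_C(+1)=+1$; hence $(MN)[\alpha]\geq0$.

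\emph{The implication $C2\Rightarrow C4$, and the main obstacle.} Here $A$ is fixed, and as before it suffices to show that $G_{M,N}$ odd$^*$ implies $MN$ is a $P_0$-matrix, re-examining the same expansion. Now e-cycles may appear among the disjoint-cycle families, contributing terms of sign $-1$, and the crux is that these cancel. The s-cycle hypothesis says that for each such e-cycle $C$ the two transversals of the corresponding submatrix of $M=A$ have equal magnitude-products, so the contribution of a family containing $C$ is exactly matched by that of the family in which $C$ is replaced by its other traversal, carrying the opposite $P(\pi)$; the hypothesis that no two e-cycles have odd intersection is what permits these replacements to be performed consistently when a family contains several e-cycles, so that no term is required to cancel against two different partners. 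Organising this bookkeeping uniformly over all principal minors and all configurations of overlapping e-cycles is, I expect, the main obstacle, and it is precisely the point at which the combinatorial apparatus for SR/DSR graphs of \cite{banajicraciun2} and its antecedents must be adapted to matrix products. Finally $C6\Rightarrow C8$ follows from $C6\Rightarrow C10$ and $C10\Leftrightarrow C8$.
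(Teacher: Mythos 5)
Your route is genuinely different from the paper's, because the paper does not actually prove the substantive arrows: its proof records that $C1\Rightarrow C2$, $C3\Rightarrow C4$, $C5\Rightarrow C6$, $C7\Rightarrow C8$, $C9\Rightarrow C10$ are definitional, cites \cite{banajiSIAM,banajicraciun,banajicraciun2} for the remaining implications, and observes (as you do) that $C5\Leftrightarrow C9$ and $C6\Rightarrow C10$ follow by specialising $B=A^t$. The parts you prove from scratch are essentially correct and are a real gain in self-containedness: the Cauchy--Binet arguments for $C7\Leftrightarrow C9$ and $C8\Leftrightarrow C10$, including the converse constructions with the factors supported on a single $\alpha\times\gamma$ block, are sound, and your permutation expansion for $C1\Rightarrow C3$ (nonzero terms decompose into disjoint cycles of $G_{M,N}$, each contributing $-P(C)$, fixed points contributing positively because $2$-cycles are e-cycles) is the right mechanism, as is the reduction from $\mathcal{Q}_0(A),\mathcal{Q}_0(B)$ to a fixed pair. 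One wording caution in your $C3\Rightarrow C1$: ``supported exactly on the edges of $C$'' must mean keeping, for each edge, only the entry corresponding to the direction in which $C$ traverses it (zeroing the unused entry of an undirected edge, which $\mathcal{Q}_0$ permits); if both entries of an undirected edge are retained, $(A'B')[\alpha]$ acquires extra terms and is no longer the single cycle term.

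The genuine gap is $C2\Rightarrow C4$, which you explicitly leave open, and with it $C6\Rightarrow C10$ and $C6\Rightarrow C8$ fall as well. Your sketched cancellation is also not quite right as stated: if ``the other traversal'' means reversing the cycle, the permutation sign is unchanged and the $B'$-entries involved change, so magnitudes need not match; the pairing that works keeps the $B'$-matching fixed and swaps only the $A$-matching inside the e-cycle (here the s-cycle condition on the labels $|A_{ij}|$ gives equal magnitudes, and the absence of $2$-cycles forced by odd$^*$ gives the sign flip through the entry signs, not only through $P(\pi)$, since an e-cycle may have negative sign and an odd number of S-vertices). The genuinely hard part --- terms containing several e-cycles and ensuring the pairing never consumes a positive term twice, which is exactly where ``no odd intersection'' enters --- is the lengthy argument of \cite{banajicraciun,banajicraciun2,banajiSIAM}. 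The paper closes this by citation; if you do the same for $C2\Rightarrow C4$ your write-up is complete, but as a standalone proof it is missing precisely this piece.
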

\begin{proof}
The implications $C1 \Rightarrow C2$, $C3 \Rightarrow C4$, $C5 \Rightarrow C6$, $C7 \Rightarrow C8$ and $C9 \Rightarrow C10$ follow by definition, while the other implications are results in \cite{banajiSIAM,banajicraciun,banajicraciun2}, or are immediate consequences of these results. Note that $C5 \Leftrightarrow C9$ and $C6 \Rightarrow C10$ are just immediate corollaries of $C1 \Leftrightarrow C3$ and $C2\Rightarrow C4$ respectively.
\end{proof}

\begin{remark}
\label{rem:P0proof}
Although some of the proofs of results in Theorem~\ref{mainP0thm} are lengthy, the spirit is worth outlining. Consider matrices $A,B$ and DSR graph $G_{A, B}$. Since the results are about the product $AB$, they begin with the Cauchy-Binet formula (\ref{eqCB0}), which provides an expression for each minor of $AB$ as a sum of products of minors of $A$ and $B$. The transition from matrices to DSR graphs is via the association of terms in minors with matchings in DSR graphs. Given appropriate sets $\alpha, \beta$ a term in $A[\alpha|\beta]$ corresponds to an R-to-S matching in $G_{A, B}$ which is perfect on the vertices $\{S_i\}_{i \in \alpha} \cup \{R_j\}_{j \in \beta}$, while a term in $B[\beta|\alpha]$ corresponds to an S-to-R matching which is perfect on the same vertex set. The union of these matchings is a subgraph of $G_{A, B}$ consisting of (disjoint) cycles and isolated, undirected, edges. Conclusions about the signs and values of terms in the minors follow from examination of these subgraphs.
\end{remark}

Alongside Theorem~\ref{mainP0thm}, we will need a few further results specifically about steady DSR graphs, which were not considered in the references above, but become important in the discussion here. 

\begin{lemma1}\label{lem:swalks}
Let $G$ be a steady DSR graph. Then any closed walk on $G$ is an s-walk.  
\end{lemma1}
\begin{proof}
Let $W$ be a closed walk on $G$. Certainly each edge of $W$ which occurs in a cycle of $G$ has finite label, since all cycles are s-cycles. Further, each edge of $W$ not occurring in a cycle of $G$ must be undirected, and consequently have a finite label: traversing arc $uv$ and returning subsequently to $u$ otherwise implies the existence of a cycle which includes $uv$. Thus all edges in $W$ have finite labels. We show that  
$$\lambda(W)\stackrel{\text{\tiny def}}{=}\left(\prod_{e\in\{\text{S-to-R edges of }W\}}l(e)\right)\left(\prod_{e\in\{\text{R-to-S edges of }W\}}l(e)\right)^{-1}=1.$$
If no vertex is repeated in $W$, then $W$ is either a cycle, in which case $\lambda(W)=1$ by hypothesis, or a single edge traversed back and forward, in which case again $\lambda(W)=1$. If, on the other hand, $W$ contains repeated vertices, then the shortest closed subwalk of $W,$ denoted $U$, is either a cycle or a single repeated edge. Denote by $W\backslash U$ the closed subwalk obtained by removing $U$ from $W$, namely if the vertex-list of $W$ is $(w_1, w_2, \ldots, w_r=w_1)$ and that of $U$ is $(w_j, \ldots, w_k=w_j)$, then for $W\backslash U$ we have $(w_1, \ldots, w_j, w_{k+1}, \ldots, w_r=w_1)$. Clearly
$$\lambda(W)=\lambda(U)\lambda(W\backslash U)=\lambda(W\backslash U).$$
If the walk $W\backslash U$ does not repeat vertices, we are done. Otherwise, we may repeat this procedure until the remaining walk has no repeated vertices, and the lemma is proved.  
\end{proof}

Since an acyclic DSR graph vacuously fulfils the conditions of Lemma~\ref{lem:swalks}, an immediate corollary is:
\begin{cor}\label{cor:tree-s}
Let $G$ be an acyclic DSR graph. Any closed walk on $G$ is an s-walk. 
\end{cor}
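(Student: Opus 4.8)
The plan is to derive \ref{cor:tree-s} as an immediate special case of Lemma~\ref{lem:swalks}. Recall that a DSR graph is called \emph{steady} precisely when \emph{every} cycle in it is an s-cycle. If $G$ is acyclic then it has no cycles whatsoever, so this defining condition is satisfied vacuously; hence an acyclic DSR graph is steady. Lemma~\ref{lem:swalks} then applies verbatim and yields that every closed walk on $G$ is an s-walk. Since the corollary is obtained by a single observation plus a citation of the preceding lemma, I do not anticipate any genuine obstacle; the only thing to check is that ``no cycles'' really does make the premise of ``steady'' vacuously true, which is immediate from the definition.

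Should a self-contained argument be preferred, one can instead reproduce the inductive scheme from the proof of Lemma~\ref{lem:swalks}, with two simplifications afforded by acyclicity. First, all edges of a closed walk on $G$ automatically have finite label: an edge traversed and later returned to (which must happen on a closed walk in a forest) would otherwise force a cycle, and in any case there are no directed-only edges lying on cycles because there are no cycles, so every edge is undirected hence finitely labelled. Second, in the induction on the number of repeated vertices, the shortest closed subwalk $U$ of a closed walk with a repeated vertex can never be a cycle, so it is always a single edge traversed forward and back, for which $\lambda(U)=1$ trivially; peeling off such $U$'s (each leaving $\lambda$ unchanged) terminates at a walk with no repeated vertices, which must itself be a single back-and-forth edge, again with $\lambda=1$. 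But as Lemma~\ref{lem:swalks} is already established, the vacuous-steadiness route is the one I would actually write down.
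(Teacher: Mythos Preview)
Your proposal is correct and matches the paper's own proof exactly: the paper simply observes that an acyclic DSR graph vacuously satisfies the hypothesis of Lemma~\ref{lem:swalks} (it is steady because there are no cycles to check), and the corollary follows immediately. Your optional self-contained sketch is also sound, though unnecessary given that Lemma~\ref{lem:swalks} is already in hand.
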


\begin{lemma1}
\label{lem:scyclePmat}
Consider some $M' \in \mathbb{R}^{n \times m}$, and some $\mathcal{M} \subseteq \mathcal{Q}(M')$ such that for each $M \in \mathcal{M}$, $G_{M, M^t}$ is steady. Given any $A, B^t \in \overline{\mathcal{M}}$, $AB$ is a $P_0$-matrix. 
\end{lemma1}
\begin{proof}
If the premise of the theorem implies that $AB$ is a $P_0$-matrix for each $A, B^t \in \mathcal{M}$, then the result follows for $A, B^t \in \overline {\mathcal{M}}$ since the set of $P_0$-matrices is closed in $\mathbb{R}^{n \times n}$. So consider $A, B^t \in {\mathcal{M}}$. Given any nonempty $\alpha \subseteq \{1, \ldots, n\}$, by the Cauchy-Binet formula:
\begin{equation}
\label{eqCB1}
(AB)[\alpha] = \sum_{\substack{\gamma \subseteq \{1, \ldots, m\}\\ |\gamma| = |\alpha|}}A[\alpha|\gamma]B[\gamma|\alpha]. 
\end{equation}
Fix $\gamma$, and for brevity define $M = A(\alpha|\gamma)$ and $k = |\alpha|$. Suppose $M_{(\sigma)}$ and $M_{(\tau)}$ are two nonzero terms in $\mathrm{det}\,M = A[\alpha|\gamma]$ corresponding to permutations $\sigma \neq \tau$ of $\{1, \ldots, k\}$. Note that
\[
|M_{(\sigma)}| = \prod_{i=1}^k |M_{i\sigma_i}|, \quad |M_{(\tau)}| = \prod_{i=1}^k |M_{i\tau_i}|\,.
\]
$M_{(\sigma)}$ and $M_{(\tau)}$ each correspond to a matching in $G_{A, A^t}$, and the union of these two matchings defines a set of disjoint cycles corresponding to nontrivial cycles in the permutation $\tau\circ\sigma^{-1}$ and isolated edges corresponding to trivial cycles in $\tau\circ\sigma^{-1}$ (see Remark~\ref{rem:P0proof}). Since $G_{A, A^t}$ is steady, we get $|M_{(\sigma)}| = |M_{(\tau)}|$ (see the proof of Lemma~7 in \cite{banajicraciun} for example). Thus $A[\alpha|\gamma] = r(\alpha, \gamma)|M_{(\sigma)}|$ where $r(\alpha, \gamma)$ is an integer independent of the values of nonzero entries in $A$. Since $B^t$ belongs to the same qualitative class as $A$ and all cycles in $G_{B^t,B}$ are s-cycles, we get $B[\gamma|\alpha] = r(\alpha, \gamma)|N_{(\sigma)}|$ where $N = B(\gamma|\alpha)$. Thus 
\[
A[\alpha|\gamma]B[\gamma|\alpha] = [r(\alpha, \gamma)]^2\,|M_{(\sigma)}|\,\,|N_{(\sigma)}| \geq 0\,,
\]
and the result that $AB$ is a $P_0$-matrix now follows from (\ref{eqCB1}).
\end{proof}

\end{section}

\begin{section}{Compound matrices and the DSR$^{[2]}$ graph}\label{sec:C2decomp}

The main purpose of this section is as follows: given a square matrix product $AB$, we wish to be able to write $(AB)^{[2]} = \tilde A \tilde B$ where $\tilde A$ depends only on $A$, and $\tilde B$ depends only on $B$. This will allow us to construct an object, the DSR$^{[2]}$ graph of $AB$, whose structure encodes information about $(AB)^{[2]}$ allowing the use of Theorem~\ref{mainP0thm} and subsequent results. To motivate the construction we first present two examples:

\begin{example}
\label{example_motivate}
Consider the matrices
\begin{equation}
\label{eq:motivate}
A = \left(\begin{array}{rrr}1&1&0\\1&1&1\\0&1&1\end{array}\right), \quad B^t = \left(\begin{array}{rrr}a&b&0\\c&d&e\\0&f&g\end{array}\right), \quad J = A B = \left(\begin{array}{ccc}a+b&c+d&f\\a+b&c+d+e&f+g\\b&d+e&f+g\end{array}\right).
\end{equation}
Assume that $a,b,c,d,e,f,g > 0$. Here, as in all examples to follow, we let $e_i$ denote the $i$th standard basis vector in $\mathbb{R}^n$ and choose the natural basis for $\Lambda^2\mathbb{R}^3$ consisting of the wedge-products of pairs of vectors $e_i$, arranged in lexicographic order. So for $\Lambda^2\mathbb{R}^3$, using the basis $(e_1 \wedge e_2, e_1 \wedge e_3, e_2 \wedge e_3)$, we can compute:
\begin{equation}
\label{eq:J2_motivate}
J^{[2]} = \left(\begin{array}{ccc}a+b+c+d+e & f+g & -f\\d+e & a+b+f+g & c+d\\-b & a+b & c+d+e+f+g\end{array}\right).
\end{equation}
By computing its $7$ principal minors, we can check directly that $J^{[2]}$ is a $P_0$-matrix (although $J$ itself need not be). We will see later that the conclusion that $J^{[2]}$ is a $P_0$-matrix can be reached without symbolic computation, and this example is a special case of a much more general result. 
\end{example}

\begin{example}
\label{example_motivate0}
Consider the matrices $A, B$ defined by:
\[
A = \left(\begin{array}{rccc}a&b&0\\-c&d&0\\0&e&f\\0&0&g\end{array}\right)\,,\quad B^t = \left(\begin{array}{rccc}a'&b'&0\\-c'&d'&0\\0&e'&f'\\0&0&g'\end{array}\right)\,,
\]
where $a,b,c,d,e,f,g,a',b',c',d',e',f',g'$ are all positive real numbers. $AB$ is a $4 \times 4$ matrix, so $(AB)^{[2]}$ is a $6 \times 6$ matrix. (i) We can check that $AB$ is a $P_0$-matrix. (ii) With the help of a symbolic algebra package such as MAXIMA \cite{maxima}, we can compute the $63$ principal minors of $(AB)^{[2]}$, confirm that these are all nonnegative, and conclude that $(AB)^{[2]}$ is a $P_0$-matrix. By Lemma~\ref{lemposstable0}, (i) and (ii) imply that $AB$ is positive semistable. It is again natural to ask whether this conclusion can be reached simply, via Theorem~\ref{mainP0thm}, and without the use of symbolic algebra. We will return to this example and show that this is indeed the case. 

\end{example}

\subsection{Identifying $\Lambda^2\mathbb{R}^n$ with antisymmetric matrices}

We will eventually write:
\[
(AB)^{[2]} = \overline{\mathbf{L}}^A\underline{\mathbf{L}}^B
\]
where the matrix $\overline{\mathbf{L}}^A$ depends on $A$ and $\underline{\mathbf{L}}^B$ depends on $B$. The linear algebraic justification for the factorisation begins by identifying $\Lambda^2\mathbb{R}^n$ with the set of $n \times n$ antisymmetric matrices. 

Define the matrix $X_{ij}$ by
\[
X_{ij} = e_ie_j^t-e_je_i^t\,.
\]
Note that $(X_{ij})^t = X_{ji} = -X_{ij}$. Denote by $\mathrm{asym}(n)$ the subspace of $\mathbb{R}^{n \times n}$ spanned by $\{X_{ij}\}$, i.e. the space of all real $n \times n$ antisymmetric matrices. Given any $C \in \mathbb{R}^{n \times n}$ define $L^{C}:\mathrm{asym}(n) \to \mathrm{asym}(n)$ via $L^C(X) = CX + XC^t$. Identifying $X_{ij} \in \mathrm{asym}(n)$ with $e_i \wedge e_j \in \Lambda^2\mathbb{R}^n$, $L^{C}$ can be identified with the second additive compound $C^{[2]}$. To be precise, define $\iota:\Lambda^2\mathbb{R}^n \to \mathrm{asym}(n)$ via $\iota(e_i \wedge e_j) = X_{ij}$. That this action on basis vectors extends naturally to give a well-defined linear bijection can easily be checked. We can then confirm that given any square matrix $C$, the following diagram commutes:\begin{center}
\begin{tikzpicture}[domain=0:4,scale=0.7]

\node at (1,4) {$\Lambda^2\mathbb{R}^n$};
\node at (4.2,4) {$\Lambda^2\mathbb{R}^n$};
\node at (0.8,1) {$\mathrm{asym}(n)$};
\node at (4.2,1) {$\mathrm{asym}(n)$};

\draw[->, semithick] (1.7,4.0) -- (3.5, 4.0);
\draw[->, semithick] (1.8,1.0) -- (3.2, 1.0);

\draw[->, semithick] (1.0, 3.6) -- (1.0, 1.4);
\draw[->, semithick] (4.0, 3.6) -- (4.0, 1.4);

\node at (0.6, 2.5) {$\iota$};
\node at (2.5, 0.6) {$L^C$};
\node at (2.5, 4.4) {$C^{[2]}$};
\node at (4.4, 2.5) {$\iota$};

\end{tikzpicture}
\end{center}
The calculation is straightforward:
\begin{eqnarray*}
(\iota \circ C^{[2]})(e_i\wedge e_j) & = &  \iota(Ce_i \wedge e_j + e_i \wedge Ce_j)\\
& = &\iota\left(\sum_kC_{ki}e_k \wedge e_j\right) + \iota\left(e_i \wedge \sum_kC_{kj}e_k\right)\\
& = &\sum_kC_{ki}\iota(e_k \wedge e_j) + \sum_kC_{kj}\iota(e_i \wedge e_k)\\
& = &\sum_kC_{ki}X_{kj} + \sum_kC_{kj}X_{ik}\\
& = &\sum_k\left[C_{ki}(e_ke_j^t-e_je_k^t) + C_{kj}(e_ie_k^t-e_ke_i^t)\right]\\
& = &Ce_ie_j^t- e_je_i^tC^t + e_ie_j^tC^t-Ce_je_i^t\\
& = & CX_{ij} + X_{ij}C^t =  L^CX_{ij} = (L^C\circ \iota)(e_i\wedge e_j)\,.
\end{eqnarray*}

\subsection{Factorising the second additive compound}

The construction is first carried out for rank $1$ matrices and extended to arbitrary matrices. 

Given $s \in \mathbb{R}^n$, we can define the linear map $\overline{\mathbf{L}}^s:\mathbb{R}^n \to \mathrm{asym}(n)$ via its action on coordinate vectors:
\[
\overline{\mathbf{L}}^se_k =  e_ks^t - se_k^t = \sum_le_ks_le_l^t - \sum_ls_le_le_k^t =  \sum_ls_lX_{kl}  = -\!\sum_{l=1}^{k-1}s_lX_{lk} + \sum_{l=k+1}^ns_lX_{kl}\,.
\]

Noting that if $k < l$ and $i<j$, then $e_i^tX_{kl}e_j=\delta_{ik}\delta_{jl}$ (where $\delta_{ik}$ is the Kronecker delta), a matrix representation of $\overline{\mathbf{L}}^s$ is obtained:
\begin{eqnarray}
\overline{\mathbf{L}}^s_{(i, j), k} & = & \left[\overline{\mathbf{L}}^se_k\right]_{(i, j)} \quad (i < j) \nonumber\\
& = & \sum_{l=k+1}^ns_le_i^tX_{kl}e_j - \sum_{l=1}^{k-1}s_le_i^tX_{lk}e_j \nonumber\\
\label{eq:matrep}
& = & \sum_{l=k+1}^ns_l\delta_{ki}\delta_{jl} - \sum_{l=1}^{k-1}s_l\delta_{kj}\delta_{il}  = \left\{\begin{array}{cc}s_j & (k= i)\\-s_i&(k=j)\\0&\mbox{otherwise}\end{array}\right.
\end{eqnarray}
For example if $s = (s_1,s_2,s_3,s_4)^t$, then
\[
\overline{\mathbf{L}}^s = \bordermatrix{{}&1&2&3 &4\cr
                (1,2)&s_2&-s_1&0&0\cr
                (1,3)&s_3&0&-s_1&0\cr
                (1,4)&s_4&0&0&-s_1\cr
                (2,3)&0&s_3&-s_2&0\cr
                (2,4)&0&s_4&0&-s_2\cr
                (3,4)&0&0&s_4&-s_3}
\]
Row and column labels have been shown explicitly to highlight the fact that each column is associated with an entry in $s$, while each row is associated with a {\em pair} of entries. If $v$ is a $1\times n$ matrix (a row vector) then  $\underline{\mathbf{L}}^v:\mathrm{asym}(n)\to \mathbb R^n$ is similarly defined via its action on basis elements of $\mathrm{asym}(n)$:
\[
\underline{\mathbf{L}}^vX_{ij} = X_{ij}v^t = (e_ie_j^t - e_je_i^t)v^t =  e_iv_j-e_jv_i\,.
\]
With $i < j$, we obtain the coordinate representation:
\[
\underline{\mathbf{L}}^v_{k, (i, j)} = \left[\underline{\mathbf{L}}^vX_{ij}\right]_k = e_k^t(e_iv_j - e_jv_i) = \delta_{ki}v_j - \delta_{kj}v_i = \left\{\begin{array}{cc}v_j & (k= i),\\-v_i&(k=j),\\0&\mbox{otherwise.}\end{array}\right.
\]
Thus $\underline{\mathbf{L}}^v = (\overline{\mathbf{L}}^{v^t})^t$.

Now consider some rank $1$ matrix $C = sv$ where $s, v^t \in \mathbb{R}^{n}$, and the associated operator $L^C:\mathrm{asym}(n) \to \mathrm{asym}(n)$. Applying the definitions:
\[
L^CX = svX + Xv^ts^t = (Xv^t)s^t -s(Xv^t)^t = (\underline{\mathbf{L}}^vX)s^t -s(\underline{\mathbf{L}}^{v}X)^t = \overline{\mathbf{L}}^s\underline{\mathbf{L}}^vX\,.
\]
In other words, $L^C = \overline{\mathbf{L}}^s\underline{\mathbf{L}}^v$. Thus representations of $\overline{\mathbf{L}}^s$ and $\underline{\mathbf{L}}^v$ above give us a natural representation and factorisation of $L^{sv}$. 

More generally, consider any $A \in \mathbb{R}^{n \times m}$ and $B \in \mathbb{R}^{m \times n}$. Defining $A_k$ to be the $k$th column of $A$ and $B^k$ to be the $k$th row of $B$ we have $AB = \sum_k A_kB^k$. We get
\begin{eqnarray*}
L^{AB}X & = & \sum_k(A_kB^kX + X(B^k)^t(A_k)^t)\\
& = & \sum_k((X(B^t)_k)(A_k)^t - A_k(X(B^t)_k)^t )\\
& = & \sum_k((\underline{\mathbf{L}}^{B^k}X)(A_k)^t -A_k(\underline{\mathbf{L}}^{B^k}X)^t ) = \sum_k\overline{\mathbf{L}}^{A_k}\underline{\mathbf{L}}^{B^k}X\,.
\end{eqnarray*}
Choosing a basis for $\Lambda^2\mathbb{R}^n$ we define for any $n \times m$ matrix $A$ the matrices
\[
\overline{\mathbf{L}}^A = \left[\overline{\mathbf{L}}^{A_1}|\overline{\mathbf{L}}^{A_2}|\cdots|\overline{\mathbf{L}}^{A_m}\right] \quad \mbox{and} \quad \underline{\mathbf{L}}^A = \left[\begin{array}{c}\underline{\mathbf{L}}^{A^1}\\\underline{\mathbf{L}}^{A^2}\\\vdots\\\underline{\mathbf{L}}^{A^n}\end{array}\right]\,.
\]
Note that $\underline{\mathbf{L}}^A = \left(\overline{\mathbf{L}}^{A^t}\right)^t$. Given $A\in\mathbb R^{n\times m}$ and $B\in\mathbb R^{m\times n}$ we get:
\begin{equation}
\label{eq:J2decomp}
(AB)^{[2]} = \overline{\mathbf{L}}^A\underline{\mathbf{L}}^B\,.
\end{equation}
Observe that $\overline{\mathbf{L}}^A$ has dimensions ${n \choose 2} \times mn$ and $\underline{\mathbf{L}}^B$ has dimensions $mn \times {n \choose 2}$.

\begin{remark}
\label{rem:CB}
When $A$ and $B$ are matrices with symbolic entries, one approach to computation of any minor of $(AB)^{[2]}$ is by application of the Cauchy-Binet formula to (\ref{eq:J2decomp}). For example assuming that $A$ is $n \times m$ and $B$ is $m \times n$, and defining $\alpha = \{1, \ldots, {n\choose 2}\}$:
\begin{equation}
\label{eqCB2}
\mathrm{det}\,((AB)^{[2]}) = \sum_{\substack{\beta \subseteq \{1, \ldots, nm\}\\ |\beta| = {n\choose 2}}}\overline{\mathbf{L}}^A[\alpha|\beta]\underline{\mathbf{L}}^B[\beta|\alpha]\,.
\end{equation}
This observation can be used in conjunction with Lemma~\ref{J2nonsingular} to rule out the possibility of pairs of imaginary eigenvalues of $AB$.
\end{remark}

\subsection{The DSR$^{[2]}$ graph}

Given $A \in \mathbb{R}^{n \times m}$ and $B \in \mathbb{R}^{m \times n}$ with $n \geq 2$ we define the {\bf DSR$^{[2]}$ graph} of the product $AB$
\[
G^{[2]}_{A, B} = G_{\overline{\mathbf{L}}^A,\, \underline{\mathbf{L}}^{B}}
\]
to be the DSR graph of the product $\overline{\mathbf{L}}^A\underline{\mathbf{L}}^{B}$. Examination of this graph allows us to make statements about $(AB)^{[2]}$ using, for example, Theorem~\ref{mainP0thm}.

\subsubsection{Terminology and notation.} The S-vertices of $G^{[2]}_{A, B}$ are the $n\choose 2$ unordered pairs $(i,j)\in\{1,\ldots,n\}\times\{1,\ldots,n\}$, $i\neq j$; the R-vertices of $G^{[2]}_{A, B}$ are the $mn$ pairs $(k,l)\in\{1,\ldots,m\}\times\{1,\ldots,n\}$. For notational convenience we denote S-vertices $(i,j)=(j,i)$ by $ij$, or $ji$;  R-vertices $(k,l)$ by $k^l$; and an edge between them of unknown direction and sign by $(ij,k^l).$ From the definitions, a necessary condition for the edge $(ij, k^l)$ to exist is $l \in \{i,j\}$. 

\begin{remark}
\label{remsignpat1}
Continuing from Remark~\ref{remsignpat}, it is clear that given a matrix $A$ and some $B \in \mathcal{Q}(A^t)$, $G^{[2]}_{A, B} = G^{[2]}_{A, A^t}$. It is also not hard to see that if $G^{[2]}_{A, A^t}$ is odd$^*$, then so is $G^{[2]}_{A, B}$ where $B \in \mathcal{Q}_0(A^t)$ ($G^{[2]}_{A, B}$ is just a subgraph of $G^{[2]}_{A, A^t}$). On the other hand, if $A$ has more than one row, and there exist $i,j$ such that $A_{ij}B_{ji} < 0$, then $G^{[2]}_{A, B}$ automatically fails Condition~C2. This follows because given any $k \ne i$ there exist two edges $(ik, j^k)$ in $G^{[2]}_{A, B}$, oppositely directed, of opposite sign, and one having label $\infty$; so $G^{[2]}_{A, B}$ has an e-cycle of length two which is not an s-cycle. In this paper, most claims will involve  matrix pairs $A,B$ satisfying $A_{ij}B_{ji} \geq 0$ (namely, $A,B^t \in \mathcal{Q}_0(C)$ for some matrix $C$), and often we will in fact assume $B \in \mathcal{Q}_0(A^t)$. 
\end{remark}
\end{section}

\begin{example}
\label{example_rank1}
If $A = (1,1,1,1)^t$, we can easily compute $G^{[2]}_{A, A^t}$ to be:
\begin{center}
\begin{tikzpicture}
[scale=.9, place/.style={circle,draw=black!30,fill=black!0 ,thick,inner sep=0pt,minimum size=5.3mm},
transition/.style={rectangle,draw=black!30,fill=black!0,thick,inner sep=0pt,minimum size=5mm},
pre/.style={<,shorten <=2pt,>=stealth',semithick},
post/.style={shorten >=1pt,>=stealth',semithick}]
\node[transition] (1T1)  at (0,1)  {$1^1$};
\node[transition] (1T2)  at (2.5,0)  {$1^2$};
\node[transition] (1T3)  at (4,2)  {$1^3$};
\node[transition] (1T4)  at (2,4)  {$1^4$};
\node[place] (12) at (1.25, 0.25) {$12$}
edge [bend left = 5, post]  (1T1)
edge [bend right = 5, dashed, post]  (1T2);
\node[place] (12) at (1.9, 1.8) {$13$}
edge [bend right = 5, post]  (1T1)
edge [bend left = 5, dashed, post]  (1T3);
\node[place] (12) at (0.5, 2.75) {$14$}
edge [bend right = 8, post]  (1T1)
edge [bend left = 8, dashed, post]  (1T4);
\node[place] (12) at (3.5, 0.75) {$23$}
edge [bend left = 5, post]  (1T2)
edge [bend right = 5, dashed, post]  (1T3);
\node[place] (12) at (2.5, 2.5) {$24$}
edge [bend left = 5, post, line width=0.08cm, color=white]  (1T2)
edge [bend left = 5, post]  (1T2)
edge [bend right = 5, dashed, post]  (1T4);
\node[place] (12) at (3.25, 3.25) {$34$}
edge [bend left = 7, post]  (1T3)
edge [bend right = 7, dashed, post]  (1T4);
\end{tikzpicture}

\end{center}
All edge-labels are $1$, so have been omitted. More generally it is clear that if $v \in \mathbb{R}^n$ has no zero entries, then (disregarding edge-signs and labels) $G^{[2]}_{v, v^t}$ is a subdivision of the complete graph $K_n$ on the R-vertices $1^1, \ldots, 1^n$. 

\end{example}

\begin{section}{Results illustrating the use of the DSR$^{[2]}$ graph}
\label{sec:results1}

Here we develop and apply some results using DSR$^{[2]}$ graphs before the development of further theory. Some of the examples presented will later be shown to fall into families amenable to analysis, after the development of theory relating DSR and DSR$^{[2]}$ graphs. The following rather basic result about the second additive compounds of certain rank $1$ matrices can easily be proved directly, but demonstrates application of the DSR$^{[2]}$ graph. A far-reaching generalisation of this result will be presented as Theorem~\ref{thm:tree}.
\begin{lemma1}
\label{lem:columnvec}
If $u \in \mathbb{R}^n$ and $s,v \in \mathcal{Q}_0(u)$, then $(sv^t)^{[2]}$ is a $P_0$-matrix.
\end{lemma1}
\begin{proof}
If the result is true for $s,v \in \mathcal{Q}(u)$, then it follows for $s,v \in \mathcal{Q}_0(u)$ since the set of $P_0$-matrices is closed. So consider $s,v \in \mathcal{Q}(u)$. By Remark~\ref{remsignpat1}, $G^{[2]}_{s, v^t} = G^{[2]}_{s,s^t}$. Note that from the definition of $\overline{\mathbf{L}}^s$, row $(j,k)$ ($j < k$) has at most two nonzero entries, the entry $s_k$ in column $j$ and the entry $-s_j$ in column $k$. Thus the edges incident with S-vertex $jk$ have labels $|s_j|$ and $|s_k|$. 
\begin{enumerate}
\item[(i)] No two cycles have odd intersection. Since no row of $\overline{\mathbf{L}}^s$ has more than two nonzero entries, each S-vertex in $G^{[2]}_{s,s^t}$ has maximum degree $2$, as illustrated for example by the DSR$^{[2]}$ graph of Example~\ref{example_rank1}. This rules out odd intersections of cycles (see Remark~\ref{remoddintersect}).
\item[(ii)] $G^{[2]}_{s,s^t}$ is steady. Define $\mathcal{I} = \{1, \ldots, n\}$, $\mathcal{I}_2 = \{(1,2), (1,3), \ldots, (n-1,n)\}$ (the set of all unordered pairs from $\mathcal{I}$). Let $i_1, \ldots, i_m \in \mathcal{I}$ be distinct, and $\alpha_1, \ldots, \alpha_m \in \mathcal{I}_2$ be distinct. An arbitrary cycle in $G^{[2]}_{s,s^t}$ must be of the form 
\[
C = (1^{i_1},\alpha_1,1^{i_2},\alpha_2,\cdots, 1^{i_m},\alpha_m,1^{i_1})
\]
where $i_k, i_{k+1} \in \alpha_k$, and since $i_k,i_{k+1}$ are distinct this implies that $\alpha_k = (i_k, i_{k+1})$. (Here we define $i_{m+1} = i_1$.) So the sequence of edge-labels in $C$ is
\[
(|s_{i_2}|, |s_{i_1}|, |s_{i_3}|, |s_{i_2}|, \cdots, |s_{i_{n}}|, |s_{i_{n-1}}|, |s_{i_1}|, |s_{i_n}|),
\]
and $C$ is clearly an $s$-cycle. 
\end{enumerate}
(i) and (ii) together imply that $G^{[2]}_{s,s^t}$ is odd$^*$, and hence by Theorem~\ref{mainP0thm} that $(sv^t)^{[2]}$ is a $P_0$-matrix. 
\end{proof}

\begin{example}[Example~\ref{example_motivate} continued]
\label{ex3row}
We now return to the matrices in (\ref{eq:motivate}). We will see that the DSR$^{[2]}$ graph makes our earlier conclusions immediate, but shows moreover that the example provides just a special case of a general result, Proposition~\ref{3sp1} below. Using (\ref{eq:matrep}), we compute
\[
\overline{\mathbf{L}}^A = \left(\begin{array}{rrrrrrrrr}1 & -1 & 0 & 1 & -1 & 0 & 1 & 0 & 0\\0 & 0 & -1 & 1 & 0 & -1 & 1 & 0 & 0\\0 & 0 & -1 & 0 & 1 & -1 & 0 & 1 & -1\end{array}\right)\,.
\]

The DSR graphs $G_{A, A^t}$ (left) and  $G^{[2]}_{A, A^t}$ (right) are depicted below:
\begin{center}
\begin{tikzpicture}
[scale=0.75, place/.style={circle,draw=blue!50,fill=blue!5 ,thick,inner sep=0pt,minimum size=5mm},
place1/.style={circle,draw=white!5,fill=white!9,thick,inner sep=0pt,minimum size=5.5mm},
transition1/.style={rectangle,draw=black!0,fill=black!0,thick,inner sep=0pt,minimum size=5mm},
transition/.style={rectangle,draw=black!50,fill=black!5,thick,inner sep=0pt,minimum size=5.3mm},
pre/.style={<,shorten <=2pt,>=stealth',semithick},
post/.style={shorten >=1pt,>=stealth',semithick}]
\node[place1] (12) at (-1.732,-1) {$S_1$};
\node[place1] (23) at (1.732,-1) {$S_2$};
\node[place1] (13) at (0, 2) {$S_3$};
\node[transition1] ([13])  at (1.732,1)  {$R_3$}
edge [bend right = 20, post]  (13)
edge [bend left = 20, post]  (23);
\node[transition1] ([12])  at (0,- 2) {$R_1$}
edge [bend left = 20, post]  (12)
edge [bend right = 20, post] (23);
\node[transition1] ([11])  at (0,0) {$R_2$}
edge [post] (12)
edge [post] (13)
edge [post] (23);
\end{tikzpicture}
\hspace{2cm}
\begin{tikzpicture}
[scale=0.8, place/.style={circle,draw=black!30,fill=black!0 ,thick,inner sep=0pt,minimum size=5.3mm},
transition/.style={rectangle,draw=black!30,fill=black!0,thick,inner sep=0pt,minimum size=5mm},
pre/.style={<,shorten <=2pt,>=stealth',semithick},
post/.style={shorten >=1pt,>=stealth',semithick}]
\node[place] (12) at (-1.732,-1) {$12$};
\node[place] (23) at (1.732,-1) {$23$};
\node[place] (13) at (0, 2) {$13$};
\node[transition] ([13])  at (1.732,1)  {$1^3$}
edge [dashed,bend right = 20, post]  (13)
edge [dashed,bend left = 20, post]  (23);
\node[transition] ([12])  at (0,- 2) {$1^2$}
edge [dashed,bend left = 20, post]  (12);
\node[transition] ([11])  at (-1.732,1) {$1^1$}
edge [bend right = 20, post]  (12);
\node[transition] ([23])  at (1.5*1.732,1.5*1)  {$2^3$}
edge [dashed,bend right = 30, post]  (13)
edge [dashed,bend left = 30, post]  (23);
\node[transition] ([22])  at (1.5*0,- 1.5*2) {$2^2$}
edge [dashed,bend left = 30, post]  (12)
edge [bend right = 30, post]  (23);
\node[transition] ([21])  at (-1.5*1.732,1.5*1) {$2^1$}
edge [bend right = 30, post]  (12)
edge [bend left = 30, post]  (13);

\node[transition] ([33])  at (2*1.732,2*1)  {$3^3$}
edge [dashed,bend left = 40, post] (23);
\node[transition] ([32])  at (2*0,- 2*2) {$3^2$}
edge [bend right = 40, post] (23);
\node[transition] ([31])  at (-2*1.732,2*1) {$3^1$}
edge [bend right = 40, post] (12)
edge [bend left = 40, post] (13);

\end{tikzpicture}

\end{center}

All edge-labels are $1$ and have been omitted. Consequently, $G_{A, A^t}$ and $G^{[2]}_{A, A^t}$ are steady. $G_{A, A^t}$ includes the e-cycles $(S_1, R_1, S_2, R_2, S_1)$ and $(S_2, R_2, S_3, R_3, S_2)$ which have odd intersection and so fails to be odd$^*$. It is also easy to confirm that for some choice of $B \in \mathcal{Q}(A^t)$, $AB$ fails to be a $P_0$-matrix. On the other hand, since all R-vertices in $G^{[2]}_{A,A^t}$ have degree less than or equal to $2$, $G^{[2]}_{A, A^t}$ cannot have a pair of cycles with odd intersection, and so is odd$^*$ (see Remark~\ref{remoddintersect}). By Theorem~\ref{mainP0thm}, $(AB)^{[2]}$ is a $P_0$-matrix for all $B \in \mathcal{Q}_0(A^t)$, and so, by Lemma~\ref{J2P0}, the nonreal eigenvalues of $AB$ lie in $\overline{\mathbb{C}_{+}}$. 

\begin{remark}
\label{rem:global_stability}
In fact, if $B \in \mathcal{Q}(A^t)$, and the matrices $J=AB$ arise as the Jacobian matrices of a differential system, the form of $J^{[2]}$ in (\ref{eq:J2_motivate}) has strong consequences. In particular, suppose $Y \subseteq \mathbb{R}^3$ is open, $f:Y\to\mathbb{R}^3$ is $C^1$, and consider the system $\dot x = f(x)$ on $Y$. Let $X\subseteq Y$ be some compact, forward invariant, set, and assume that for each $x \in X$, $-Df(x)$ is of the form in (\ref{eq:motivate}). Then, by observation of (\ref{eq:J2_motivate}), $Df(x)^{[2]}$ is irreducibly diagonally dominant (in its columns) with negative diagonal entries, and it is possible to find a (single) positive diagonal matrix $E$ such that $E\,Df(x)^{[2]}\,E^{-1}$ is strictly diagonally dominant for all $x \in X$. Thus there exists a ``logarithmic norm'' $\mu_E(\,\cdot\,)$ and $p < 0$ such that for all $x \in X$, $\mu_E(Df(x)^{[2]}) < p$ \cite{strom}, and applying the theory developed by Li and Muldowney (\cite{li_muldowney_1993,li_muldowney_1996} for example), the system $\dot x = f(x)$ cannot have invariant closed curves in $X$: periodic orbits whether stable or unstable are ruled out, as are homoclinic trajectories, heteroclinic cycles, etc. In fact every nonwandering point of the system must be an equilibrium \cite{li_muldowney_1996}.
\end{remark}

\end{example}

Example~\ref{example_motivate} suggests the following much more general result:
\begin{prop}
\label{3sp1}
(i) Consider any $3 \times m$ $(0,1,-1)$ matrix $A$ and any $B \in \mathcal{Q}_0(A^t)$. Then $(AB)^{[2]}$ is a $P_0$-matrix and the nonreal spectrum of $AB$ does not intersect $\mathbb{C}_{-}$. (ii) Consider any $m \times 3$ $(0,1,-1)$ matrix $A$ and any $B \in \mathcal{Q}_0(A^t)$. Then the nonreal spectrum of $AB$ does not intersect $\mathbb{C}_{-}$.
\end{prop}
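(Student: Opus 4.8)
My plan is to reduce both parts to the machinery already assembled---the factorisation $(AB)^{[2]} = \overline{\mathbf{L}}^A\underline{\mathbf{L}}^B$, the translation into DSR$^{[2]}$ graphs, Theorem~\ref{mainP0thm}, and Lemma~\ref{J2P0}---together with a structural analysis of the DSR$^{[2]}$ graph of a $3\times m$ $(0,1,-1)$ matrix. For part (i), since the set of $P_0$-matrices is closed, it suffices (as in the proof of Lemma~\ref{lem:columnvec}) to treat $B\in\mathcal{Q}(A^t)$, whence $G^{[2]}_{A,B} = G^{[2]}_{A,A^t}$ by Remark~\ref{remsignpat1}. The crucial observation is that $n = 3$, so $\Lambda^2\mathbb{R}^3$ is again $3$-dimensional and the S-vertices of $G^{[2]}_{A,A^t}$ are just the three pairs $12$, $13$, $23$. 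From the matrix representation (\ref{eq:matrep}), row $(i,j)$ of $\overline{\mathbf{L}}^A$ associated with a column $A_k$ of $A$ contributes an edge $(ij,k^l)$ only when $l\in\{i,j\}$, and since all entries of $A$ lie in $\{0,1,-1\}$, every finite edge-label equals $1$. Hence $G^{[2]}_{A,A^t}$ is automatically \emph{steady} (all cycles are s-cycles, the labels being all $1$).

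It then remains to show $G^{[2]}_{A,A^t}$ is odd$^*$, i.e. that no two e-cycles have odd intersection. By Remark~\ref{remoddintersect}, an odd intersection of two cycles requires an R-vertex of degree $\geq 3$. But each R-vertex $k^l$ of $G^{[2]}_{A,A^t}$ corresponds to the pair $(k,l)$ with $l\in\{1,2,3\}$; the edges incident to $k^l$ go to S-vertices $ij$ with $l\in\{i,j\}$, and there are exactly two such pairs among $\{12,13,23\}$ (the two pairs containing $l$). So every R-vertex has degree at most $2$, odd intersections are impossible, and $G^{[2]}_{A,A^t}$ is odd$^*$. By Theorem~\ref{mainP0thm} (implication $C6\Rightarrow C4$, or $C2\Rightarrow C4$), $(AB)^{[2]} = \overline{\mathbf{L}}^A\underline{\mathbf{L}}^B$ is a $P_0$-matrix for all $B\in\mathcal{Q}_0(A^t)$; Lemma~\ref{J2P0} then gives that the nonreal spectrum of $AB$ does not meet $\mathbb{C}_{-}$, completing part (i).

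For part (ii), $A$ is $m\times 3$ and $B\in\mathcal{Q}_0(A^t)$, so $B$ is $3\times m$, $AB$ is $m\times m$, and $BA$ is $3\times 3$. Here I would invoke Lemma~\ref{spectranposelemma}: $\mathrm{Spec}(AB)\setminus\{0\} = \mathrm{Spec}(BA)\setminus\{0\}$, so the nonreal parts of the two spectra coincide. Now $B^t\in\mathcal{Q}_0(A)$, i.e. $B^t$ is a $3\times m$ matrix belonging to the closure of a $(0,1,-1)$-pattern --- but part (i) requires the \emph{left} factor to be the $3\times m$ $(0,1,-1)$ matrix. So I would apply part (i) with the roles reshuffled: take the $3\times m$ $(0,1,-1)$ matrix to be $A^t$ (the transpose of the given $m\times 3$ matrix $A$) and note $B\in\mathcal{Q}_0((A^t)^t)$; then part (i) applies to the product $A^t B^{t\,t}$... this needs care. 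The cleanest route is: $BA = BA$ where $B$ lies in $\mathcal{Q}_0(A^t)$; transposing, $(BA)^t = A^t B^t$ with $B^t\in\mathcal{Q}_0(A)$ and $A^t$ a $3\times m$ $(0,1,-1)$ matrix, so by part (i) applied to the pair $(A^t, B^t)$, $(A^tB^t)^{[2]}$ is a $P_0$-matrix and the nonreal spectrum of $A^tB^t = (BA)^t$ avoids $\mathbb{C}_{-}$; since a matrix and its transpose have the same spectrum, the nonreal spectrum of $BA$ avoids $\mathbb{C}_{-}$, and by Lemma~\ref{spectranposelemma} so does that of $AB$.

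The main obstacle I anticipate is purely bookkeeping: getting the transpositions in part (ii) exactly right so that the hypothesis ``$(0,1,-1)$ matrix on the left, arbitrary sign-compatible matrix on the right'' of part (i) is matched, and confirming that $\mathcal{Q}_0$ behaves well under transpose (it does: $B\in\mathcal{Q}_0(A^t)\iff B^t\in\mathcal{Q}_0(A)$). The genuinely substantive step is the degree bound on R-vertices of $G^{[2]}_{A,A^t}$ for $n=3$, which is what makes the odd$^*$ condition automatic; everything else is an application of the quoted results. Note one should also double check that the $3\times 3$ case $m=3$ of (ii) is not already subsumed by (i) applied directly --- it is, but the statement is kept general for $m\ne 3$ where (i) does not apply to $AB$ itself.
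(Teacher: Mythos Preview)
Your proposal is correct and follows essentially the same route as the paper: for (i) you observe that all edge-labels in $G^{[2]}_{A,A^t}$ equal $1$ (hence steady) and that every R-vertex has degree at most $2$ (ruling out odd intersections), so the graph is odd$^*$ and Theorem~\ref{mainP0thm} with Lemma~\ref{J2P0} applies; for (ii) you apply (i) to the pair $(A^t,B^t)$ and transfer the spectral conclusion to $AB$ via transposition and Lemma~\ref{spectranposelemma}. The paper does precisely this, only more tersely---your bookkeeping in (ii) is fine, and the paper silently combines the ``same spectrum under transpose'' step with Lemma~\ref{spectranposelemma} just as you do.
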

\begin{proof}
If the results hold for each $B \in \mathcal{Q}(A^t)$, they follow for each $B \in \mathcal{Q}_0(A^t)$ by closure. Choose any $B \in \mathcal{Q}(A^t)$. (i) All edge-labels in $G^{[2]}_{A,B}$ are $1$ and so $G^{[2]}_{A,B}$ is steady. On the other hand, all R-vertices in $G^{[2]}_{A,B}$ have degree less than or equal to two (three edges of the form $(12,k^l)$, $(13,k^l)$ and $(23,k^l)$ cannot all exist), and so odd intersections between cycles are impossible. Consequently, $G^{[2]}_{A,B}$ is odd$^*$. The conclusions follow from Theorem~\ref{mainP0thm} and Lemma~\ref{J2P0}. (ii) From (i), $(A^tB^t)^{[2]}$ is a $P_0$-matrix and consequently, by Lemma~\ref{J2P0}, the nonreal spectrum of $A^tB^t$ does not intersect $\mathbb{C}_{-}$. Since the nonzero spectrum of $AB$ is equal to that of $A^tB^t$ (Lemma~\ref{spectranposelemma}), the same holds for $AB$.
\end{proof}

Further generalisations of Proposition~\ref{3sp1} will be proved as Theorem~\ref{thm:3sp} later.

\begin{remark}
\label{rem:CRN3noHopf}
Proposition~\ref{3sp1} has the following interpretation in terms of the dynamics of CRNs. Consider a system of reactions where all entries in the stoichiometric matrix have magnitude $1$ (a fairly common situation) and where the kinetics satisfies the condition in (\ref{eq:CRNJac}). Suppose further there are no more than three substrates, or no more than three reactions. Then the system is incapable of Hopf bifurcations.
\end{remark}

The next result which follows with little effort, involves $(0,1,-1)$ matrices with no more than two entries in each column (or row). 

\begin{prop}
\label{prop:switchRS}
Let $A$ be an $n \times m$ $(0,1,-1)$ matrix. (i) If $A$ has no more than two nonzero entries in each column then, for each $B \in \mathcal{Q}_0(A^t)$, $AB$ and $(AB)^{[2]}$ are $P_0$-matrices and so $AB$ is positive semistable. (ii) If $A$ has no more than two nonzero entries in each row then, for any $B \in \mathcal{Q}_0(A^t)$, $AB$ is positive semistable. 
\end{prop}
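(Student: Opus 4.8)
The plan is to establish part (i) by reading the required properties directly off the $\mathrm{DSR}^{[2]}$ graph, and then to deduce part (ii) from part (i) by a transposition argument.

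For part (i), the first step is to show that the hypothesis ``at most two nonzero entries in each column of $A$'' forces every R-vertex of $G^{[2]}_{A,A^t}$ to have degree at most $2$. Reading off (\ref{eq:matrep}), the column of $\overline{\mathbf{L}}^A$ labelled $k^l$ has a nonzero entry in row $ij$ only if $l \in \{i,j\}$ and the complementary index lies in the support of the $k$th column of $A$; hence the degree of $k^l$ is bounded by the number of nonzero entries in column $k$ of $A$, which is at most $2$. Since $A$, and therefore $\overline{\mathbf{L}}^A$, is a $(0,1,-1)$ matrix, all edge-labels of $G^{[2]}_{A,A^t}=G_{\overline{\mathbf{L}}^A,(\overline{\mathbf{L}}^A)^t}$ equal $1$, so $G^{[2]}_{A,A^t}$ is steady and in particular every cycle is an s-cycle; combined with the degree bound and Remark~\ref{remoddintersect} (odd intersection of two cycles requires an R-vertex of degree $\geq 3$), this shows $G^{[2]}_{A,A^t}$ is odd$^*$. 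Now for any $B \in \mathcal{Q}_0(A^t)$ the matrix $\underline{\mathbf{L}}^B$ lies in $\mathcal{Q}_0((\overline{\mathbf{L}}^A)^t)$, because the $\underline{\mathbf{L}}$ construction is entrywise and sign-preserving, so condition C6 (hence C10) of Theorem~\ref{mainP0thm}, applied to the matrix $\overline{\mathbf{L}}^A$, gives that $(AB)^{[2]}=\overline{\mathbf{L}}^A\underline{\mathbf{L}}^B$ is a $P_0$-matrix. The same reasoning applied to the ordinary DSR graph $G_{A,A^t}$ — whose R-vertices correspond to columns of $A$ and so have degree at most $2$, and all of whose labels are $1$ — shows $G_{A,A^t}$ is odd$^*$, so Theorem~\ref{mainP0thm} gives that $AB$ is a $P_0$-matrix. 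Lemma~\ref{lemposstable0} then yields that $AB$ is positive semistable.

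For part (ii), observe that if $A$ has at most two nonzero entries per row then $A^t$ has at most two per column, and $B \in \mathcal{Q}_0(A^t)$ implies $B^t \in \mathcal{Q}_0(A)=\mathcal{Q}_0((A^t)^t)$. Applying part (i) to the pair $(A^t, B^t)$ shows $A^tB^t$, and hence its transpose $BA$, is positive semistable; since $\mathrm{Spec}(AB)\backslash\{0\}=\mathrm{Spec}(BA)\backslash\{0\}$ by Lemma~\ref{spectranposelemma} and $0 \in \overline{\mathbb{C}_+}$, it follows that $\mathrm{Spec}(AB)\subseteq\overline{\mathbb{C}_+}$, i.e. $AB$ is positive semistable. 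The step I expect to require the most care is the passage from the column-count hypothesis on $A$ to the degree bound on the R-vertices of $G^{[2]}_{A,A^t}$, together with the parallel verification that $\underline{\mathbf{L}}^B$ stays inside the closed qualitative class of $(\overline{\mathbf{L}}^A)^t$; once these combinatorial facts are in place, everything else is a direct invocation of Theorem~\ref{mainP0thm}, Lemma~\ref{lemposstable0} and Lemma~\ref{spectranposelemma}.
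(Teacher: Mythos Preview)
Your proof is correct and follows essentially the same approach as the paper: both show that the degree of every R-vertex in $G_{A,A^t}$ and $G^{[2]}_{A,A^t}$ is at most $2$ (ruling out odd intersections via Remark~\ref{remoddintersect}), use the $(0,1,-1)$ hypothesis to get all labels equal to $1$ (hence steady), conclude odd$^*$, and invoke Theorem~\ref{mainP0thm} and Lemma~\ref{lemposstable0}; part (ii) is likewise handled by transposition and Lemma~\ref{spectranposelemma}. The only cosmetic difference is that you route the application of Theorem~\ref{mainP0thm} through C6$\Rightarrow$C10 (noting $\underline{\mathbf{L}}^B\in\mathcal{Q}_0((\overline{\mathbf{L}}^A)^t)$), whereas the paper first restricts to $B\in\mathcal{Q}(A^t)$, uses C2$\Rightarrow$C4 on $G^{[2]}_{A,B}=G^{[2]}_{A,A^t}$, and then passes to the closure---but these are equivalent formulations.
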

\begin{proof}
If the results are true for each $B \in \mathcal{Q}(A^t)$, then they hold for each $B \in \mathcal{Q}_0(A^t)$ by closure. Choose any $B \in \mathcal{Q}(A^t)$. (i) As all edge-labels in $G_{A,B}$ and $G^{[2]}_{A,B}$ are $1$, both are steady. As all R-vertices in $G_{A,B}$ have degree less than or equal to two, by Remark~\ref{remoddintersect},  $G_{A,B}$ is in fact odd$^*$. Further, all R-vertices in $G^{[2]}_{A,B}$ have degree less than or equal to two: this follows by noting that the edge $(ij, k^j)$ exists if and only if $A_{ik}\ne 0$, which (for fixed $k$, $j$) can occur for a maximum of two indices $i$ by the assumption that no column of $A$ contains more than two nonzero entries. Consequently, $G^{[2]}_{A,B}$ is odd$^*$. The remaining conclusions follow from Theorem~\ref{mainP0thm} and Lemma~\ref{lemposstable0}. (ii) From (i), $A^tB^t$ and $(A^tB^t)^{[2]}$ are $P_0$-matrices and consequently, by Lemma~\ref{lemposstable0}, $A^tB^t$ is positive semistable. Since the nonzero spectrum of $AB$ is equal to that of $A^tB^t$ (Lemma~\ref{spectranposelemma}), the same holds for $AB$.
\end{proof}

\begin{remark}
Proposition~\ref{prop:switchRS} will be generalised as Theorem~\ref{thm:alls}, where the requirement that $A$ is a $(0,1,-1)$ matrix will be weakened to the requirement that $G_{A, A^t}$ is steady. CRNs where the degree of each S-vertex is less than or equal to two were discussed in \cite{angelileenheersontag}. 
\end{remark}

\begin{example}[Example~\ref{example_motivate0} continued]
Returning to this example we have the matrix $A$ and DSR graph $G_{A, A^t}$:
\begin{center}
\begin{tikzpicture}
[scale=0.95, place/.style={circle,draw=black!30,fill=black!0,thick,inner sep=0pt,minimum size=5.5mm},
transition/.style={rectangle,draw=black!30,fill=black!0,thick,inner sep=0pt,minimum size=5mm},
place1/.style={circle,draw=white!5,fill=white!9,thick,inner sep=0pt,minimum size=5.5mm},
transition1/.style={rectangle,draw=black!0,fill=black!0,thick,inner sep=0pt,minimum size=5mm},
pre/.style={->,shorten <=1pt,shorten >=1pt,>=stealth',semithick},
pres/.style={->,shorten <=-1pt,shorten >=-1pt,>=stealth',semithick},
pre1/.style={<-,shorten <=1pt,shorten >=1pt,>=stealth',semithick},
pres1/.style={<-,shorten <=-1pt,shorten >=0pt,>=stealth',semithick}];

\begin{scope}[xshift=-7cm]

\node at (-4.5,0) {$A = \left(\begin{array}{rccc}a&b&0\\-c&d&0\\0&e&f\\0&0&g\end{array}\right)\,,\qquad G_{A,A^t}=$};

\node[transition1] (R1) at (180:1) {$R_1$};
\node[place1] (S2) at (90:1) {$S_2$}
edge[pre1,dashed,bend right=25, -] node [auto, swap]{$\scriptstyle{c}$} (R1);
\node[transition1] (R2) at (0:1) {$R_2$}
edge[pre1,bend right=25, -] node [auto, swap]{$\scriptstyle{d}$}(S2);
\node[place1] (S1) at (-90:1) {$S_1$}
edge[pre1,bend right=25, -] node [auto, swap]{$\scriptstyle{b}$}(R2)
edge[pre1,bend left=25, -] node [auto]{$\scriptstyle{a}$}(R1);
\end{scope}

\node[place1] (S3) at (-4.7,0) {$S_3$}
edge[pre1,-] node [auto, swap]{$\scriptstyle{e}$}(R2);
\node[transition1] (R3) at (-3.3,0) {$R_3$}
edge[pre1,-] node [auto, swap]{$\scriptstyle{f}$}(S3);
\node[place1] (S4) at (-1.9,0) {$S_4$}
edge[pre1,-] node [auto, swap]{$\scriptstyle{g}$}(R3);
\end{tikzpicture}
\end{center}
$G_{A, A^t}$ is odd by inspection, and so $AB$ is a $P_0$-matrix for all $B \in \mathcal{Q}_0(A^t)$. Following removal of vertices and edges which do not participate in any cycles, we get the reduced representation of $G^{[2]}_{A,A^t}$:
\begin{center}
\begin{tikzpicture}
[scale=1.2, place/.style={circle,draw=black!30,fill=black!0,thick,inner sep=0pt,minimum size=5.3mm},
transition/.style={rectangle,draw=black!30,fill=black!0,thick,inner sep=0pt,minimum size=5mm},
pre/.style={->,shorten <=1pt,shorten >=1pt,>=stealth',semithick},
pres/.style={->,shorten <=-1pt,shorten >=-1pt,>=stealth',semithick},
pre1/.style={<-,shorten <=1pt,shorten >=1pt,>=stealth',semithick},
pres1/.style={<-,shorten <=-1pt,shorten >=0pt,>=stealth',semithick}];

\begin{scope}[xshift=-4cm]

\node[transition] (2T3) at (180:1) {$2^3$};
\node[place] (13) at (90:1) {$13$}
edge[pre1,dashed,bend right=25, -] node [fill=white]{$\scriptstyle{b}$}  (2T3);
\node[transition] (1T3) at (0:1) {$1^3$}
edge[pre1,dashed,bend right=25, -] node [fill=white]{$\scriptstyle{a}$} (13);
\node[place] (23) at (-90:1) {$23$}
edge[pre1,bend right=25, -] node [fill=white]{$\scriptstyle{c}$} (1T3)
edge[pre1,dashed,bend left=25, -] node [fill=white]{$\scriptstyle{d}$} (2T3);
\end{scope}

\node[transition] (2T4) at (180:1) {$2^4$};
\node[place] (14) at (90:1) {$14$}
edge[pre1,dashed,bend right=25, -] node [fill=white]{$\scriptstyle{b}$}  (2T4);
\node[transition] (1T4) at (0:1) {$1^4$}
edge[pre1,dashed,bend right=25, -] node [fill=white]{$\scriptstyle{a}$}  (14);
\node[place] (24) at (-90:1) {$24$}
edge[pre1,bend right=25, -] node [fill=white]{$\scriptstyle{c}$}  (1T4)
edge[pre1,dashed,bend left=25, -] node [fill=white]{$\scriptstyle{d}$}  (2T4);

\node[transition] (3T1) at (-2,1.2) {$3^1$}
edge[pre1,bend right=5, -] node [fill=white]{$\scriptstyle{f}$}  (13)
edge[pre1,bend left=5, -] node [fill=white]{$\scriptstyle{g}$}  (14);

\node[transition] (3T2) at (-2,-1.2) {$3^2$}
edge[pre1,bend left=5, -] node [fill=white]{$\scriptstyle{f}$}  (23)
edge[pre1,bend right=5, -] node [fill=white]{$\scriptstyle{g}$}  (24);

\node[place] (12) at (-7,0) {$12$};

\node[transition] (2T1) at (-6,1.2) {$2^1$}
edge[pre1,bend left=15, -] node [fill=white]{$\scriptstyle{e}$}  (13)
edge[pre1,bend right=25, -] node [fill=white]{$\scriptstyle{d}$}  (12);
\node[transition] (2T2) at (-6,-1.2) {$2^2$}
edge[pre1,bend right=15, -] node [fill=white]{$\scriptstyle{e}$}  (23)
edge[pre1,dashed,bend left=25, -] node [fill=white]{$\scriptstyle{b}$}  (12);

\end{tikzpicture}
\end{center}
which can be computed to be odd$^*$. Consequently $(AB)^{[2]}$ is a $P_0$-matrix for all $B \in \mathcal{Q}_0(A^t)$. Thus, by Lemma~\ref{lemposstable0}, $AB$ is positive stable for each $B \in \mathcal{Q}_0(A^t)$. 
\end{example}

\begin{example}
\label{example_motivate1}
We illustrate the use of the DSR$^{[2]}$ graph to make claims about the second additive compound of a single square matrix (rather than a product of two matrices). Let $a,b,c,d,e,f,g,h,j > 0$ and consider the matrix and DSR graph:
\begin{center}
\begin{tikzpicture}
[scale=1.5, place/.style={circle,draw=black!0,fill=black!0,thick,inner sep=0pt,minimum size=5.3mm},
transition/.style={rectangle,draw=black!0,fill=black!0,thick,inner sep=0pt,minimum size=5mm},
place1/.style={circle,draw=black!0,fill=black!0,thick,inner sep=0pt,minimum size=5.5mm},
transition1/.style={rectangle,draw=black!0,fill=black!0,thick,inner sep=0pt,minimum size=5mm},
pre/.style={->,shorten <=1pt,shorten >=1pt,>=stealth',semithick},
pres/.style={->,shorten <=-1pt,shorten >=-1pt,>=stealth',semithick},
pre1/.style={<-,shorten <=1pt,shorten >=1pt,>=stealth',semithick},
pres1/.style={<-,shorten <=-1pt,shorten >=0pt,>=stealth',semithick}];

\node at (-4.5,0) {$A = \left(\begin{array}{rccc}a&b&0&0\\-c&d&g&0\\0&0&e&h\\j&0&0&f\end{array}\right)\,$};
\node at (-1.8,0) {$G_{A,I}:$};
\node[transition1] (R2) at (-135:1) {$R_2$};
\node[place1] (S2) at (-90:1) {$S_2$}
edge[pre,bend left=15, -] node [auto]{\scriptsize{$d$}} (R2);
\node[transition1] (R3) at (-45:1) {$R_3$}
edge[pre,bend left=15, ->] node [auto]{\scriptsize{$g$}}(S2);
\node[place1] (S3) at (0:1) {$S_3$}
edge[pre,bend left=15, -] node [auto]{\scriptsize{$e$}}(R3);
\node[transition1] (R4) at (45:1) {$R_4$}
edge[pre,bend left=15, ->] node [auto]{\scriptsize{$h$}}(S3);
\node[place1] (S4) at (90:1) {$S_4$}
edge[pre,bend left=15, -] node [auto]{\scriptsize{$f$}}(R4);
\node[transition1] (R1) at (135:1) {$R_1$}
edge[pre,bend left=15, ->] node [auto]{\scriptsize{$j$}}(S4)
edge[pre,dashed,bend left=20, ->] node [auto]{\scriptsize{$c$}}(S2);
\node[place1] (S1) at (180:1) {$S_1$}
edge[pre,bend left=15, -] node [auto]{\scriptsize{$a$}}(R1)
edge[pre1,bend right=15, <-] node [auto,swap]{\scriptsize{$b$}}(R2);
\end{tikzpicture}
\end{center}
Here $I$ is the $4\times 4$ identity matrix which we note lies in $\mathcal{Q}_0(A)$. $G_{A,I}$ has exactly two cycles, one of which is odd and one of which is even. Following removal of vertices and edges which do not participate in any cycles, we get the reduced representation of $G^{[2]}_{A,I}$:
\begin{center}
\begin{tikzpicture}
[scale=1.1, place/.style={circle,draw=black!30,fill=black!0,thick,inner sep=0pt,minimum size=5.3mm},
transition/.style={rectangle,draw=black!30,fill=black!0,thick,inner sep=0pt,minimum size=5mm},
pre/.style={->,shorten <=1pt,shorten >=1pt,>=stealth',semithick},
pres/.style={->,shorten <=-1pt,shorten >=-1pt,>=stealth',semithick},
pre1/.style={<-,shorten <=1pt,shorten >=1pt,>=stealth',semithick},
pres1/.style={<-,shorten <=-1pt,shorten >=0pt,>=stealth',semithick}];

\begin{scope}[xshift=-7cm]

\node[transition] (2T3) at (180:1) {$2^3$};
\node[place] (13) at (90:1) {$13$}
edge[pre1,dashed,bend right=25, <-] (2T3);
\node[transition] (1T3) at (0:1) {$1^3$}
edge[pre1,dashed,bend right=25, -] (13);
\node[place] (23) at (-90:1) {$23$}
edge[pre1,bend right=25, <-] (1T3)
edge[pre1,dashed,bend left=25, -] (2T3);
\end{scope}

\node[transition] (2T4) at (90:1) {$2^4$};
\node[place] (14) at (0:1) {$14$}
edge[pre1,dashed,bend right=25, <-] (2T4);
\node[transition] (1T4) at (-90:1) {$1^4$}
edge[pre1,dashed,bend right=25, -] (14);
\node[place] (24) at (180:1) {$24$}
edge[pre1,bend right=25, <-] (1T4)
edge[pre1,dashed,bend left=25, -] (2T4);

\node[place] (34) at (-4.4,0) {$34$}
edge[pre1,<-] (1T3);
\node[transition] (3T4) at (-2.7,0) {$3^4$}
edge[pre1,dashed,-] (34)
edge[pre1,dashed,->] (24);

\node[transition] (3T1) at (-5.3,1.05) {$3^1$}
edge[pre1,bend right=3,-] (13);
\node[place] (12) at (-3.8,1) {$12$}
edge[pre1,bend right=3,<-] (3T1);
\node[transition] (1T2) at (-2.3,0.8) {$1^2$}
edge[pre1,dashed,bend right=3,-] (12)
edge[pre1,bend left=10,->] (24);

\node[transition] (4T2) at (-3.8,-1) {$4^2$}
edge[pre1,bend left=3,->] (23)
edge[pre1,bend right=15,-] (24);

\node[transition] (4T1) at (-1.5,1.5) {$4^1$};

\draw[pre1,->] (4T1) .. controls (-6,1.5) and (-6.5, 1.5) .. (13);
\draw[pre1,-] (4T1) .. controls (0,1.5) and (0.9, 1.4) .. (14);

\end{tikzpicture}
\end{center}
Edge labels have been omitted for the reason that $G^{[2]}_{A, I}$ is odd, namely there are no e-cycles at all. Consequently $A^{[2]} = (AI)^{[2]}$ is a $P_0$-matrix, and $A$ cannot have a pair of nonreal eigenvalues in $\mathbb{C}_{-}$.
\end{example}

\begin{remark}\label{rem:monotonecycles} 
The matrix $A$ in Example~\ref{example_motivate1} is not obviously amenable to analysis using the results of \cite{gouze98} or \cite{angelihirschsontag}: the sufficient conditions for the absence of {\em stable} oscillation in dynamical systems developed in these papers are neither necessary nor sufficient for the preclusion of Hopf bifurcation. 
\end{remark}

\end{section}

\begin{section}{Relationships between the DSR and DSR$^{[2]}$ graphs}
\label{sec:theory}

Throughout this section we consider two matrices $A\in\mathbb{R}^{n \times m}$ and $B \in \mathbb{R}^{m \times n}$ ($n \geq 2$). In order to understand better the DSR$^{[2]}$ graph, it is convenient to develop some theory on the relationship between structures in $G_{A, B}$ and $G^{[2]}_{A, B}$. The results presented below begin this process. Applications of this theory are presented in the final section.

\begin{subsection}{Removing leaves}

Our first observation is that a certain simple operation on DSR graphs of the form $G_{A, A^t}$ -- the removal of pendant R-vertices and their incident edges -- does not alter the cycle structure of the corresponding DSR$^{[2]}$ graph. This claim does not extend to the removal of pendant S-vertices. 

\begin{lemma}
\label{lem:leafremove}
Consider an $n \times m$ matrix $A$ ($m \geq 2$) whose $k$th column has a single nonzero entry $A_{k_0,k}$, so that vertex $R_k$ in $G_{A, A^t}$ has degree $1$. Let $\tilde A$ be the submatrix of $A$ where the $k$th column has been removed. Then $G^{[2]}_{\tilde A, \tilde A^t}$ is odd$^*$ (resp. odd, resp. steady) if and only if $G^{[2]}_{A, A^t}$ is odd$^*$ (resp. odd, resp. steady).
\end{lemma}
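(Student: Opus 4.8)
The plan is to show that $G^{[2]}_{A,A^t}$ and $G^{[2]}_{\tilde A,\tilde A^t}$ have \emph{identical} cycle structure, so that each of the three equivalences becomes immediate from the definitions of ``odd$^*$'', ``odd'' and ``steady''. The key point is that deleting the $k$th column of $A$ removes from the DSR$^{[2]}$ graph precisely the $n$ R-vertices $k^1,\ldots,k^n$, every one of which is a leaf (or isolated), hence lies on no cycle.

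First I would analyse the columns of $\overline{\mathbf{L}}^A$ indexed by $(k,l)$, $l=1,\ldots,n$. By the matrix representation (\ref{eq:matrep}), the entry of $\overline{\mathbf{L}}^A$ in row $ij$ ($i<j$) and column $(k,l)$ is nonzero only if $l\in\{i,j\}$, and equals $A_{jk}$ when $l=i$ and $-A_{ik}$ when $l=j$. Since column $k$ of $A$ vanishes except in row $k_0$, the unique possibility is that this entry sits in the single row $ij$ with $\{i,j\}=\{l,k_0\}$ (and there is no nonzero entry at all when $l=k_0$). Thus each column $(k,l)$ of $\overline{\mathbf{L}}^A$ has at most one nonzero entry. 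Since $B=A^t$ gives $\underline{\mathbf{L}}^{A^t}=(\overline{\mathbf{L}}^A)^t$, all edges of $G^{[2]}_{A,A^t}$ are undirected (cf.\ Remark~\ref{remsignpat}, Remark~\ref{remsignpat1}), so no degenerate two-cycles occur; it follows that each R-vertex $k^l$ of $G^{[2]}_{A,A^t}$ has degree at most $1$, and therefore no cycle of $G^{[2]}_{A,A^t}$ passes through any of $k^1,\ldots,k^n$.

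Next I would carry out the structural comparison. Regarding $\tilde A$ as $A$ with its $k$th column deleted, the matrix $\overline{\mathbf{L}}^{\tilde A}$ is obtained from $\overline{\mathbf{L}}^A$ by deleting exactly the $n$ columns $(k,l)$, $l=1,\ldots,n$: every other column of $\overline{\mathbf{L}}^A$ depends only on the corresponding column of $A$, which is unchanged, and the rows are untouched. Hence $G^{[2]}_{\tilde A,\tilde A^t}$ is $G^{[2]}_{A,A^t}$ with the vertices $k^1,\ldots,k^n$ and their incident edges removed, all remaining vertices, edges, signs and labels being identical. Because the deleted vertices lie on no cycle, $G^{[2]}_{\tilde A,\tilde A^t}$ and $G^{[2]}_{A,A^t}$ have the same family of cycles, with the same signs, labels and orientations, so e-cycles, s-cycles, and odd intersections of cycles correspond under this identification. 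The three ``if and only if'' statements follow at once.

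The calculations here are routine; the only substantive step is the first one, recognising that the single-nonzero-entry hypothesis on a column of $A$ forces the associated R-vertices of $G^{[2]}_{A,A^t}$ to be leaves. I would also remark that this is exactly the reason the analogous claim fails for pendant S-vertices: deleting a \emph{row} of $A$ deletes the S-vertices $ij$ meeting that index and alters edges throughout $G^{[2]}_{A,A^t}$, rather than excising a clean set of leaves, so the cycle structure genuinely changes.
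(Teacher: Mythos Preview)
Your proposal is correct and follows essentially the same approach as the paper's proof: both arguments show that the R-vertices $k^1,\ldots,k^n$ in $G^{[2]}_{A,A^t}$ are leaves (since the corresponding columns of $\overline{\mathbf{L}}^A$ have at most one nonzero entry), and that $G^{[2]}_{\tilde A,\tilde A^t}$ is the induced subgraph obtained by deleting them, so the cycle structures coincide. Your version is slightly more explicit about why no two-cycles arise at these vertices and adds a helpful remark on why the dual claim for pendant S-vertices fails, but the core argument is identical.
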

\begin{proof}
In one direction the result is trivial: if $G^{[2]}_{A, A^t}$ is odd$^*$ (resp. odd, resp. steady), then so is $G^{[2]}_{\tilde A, \tilde A^t}$ as it is a subgraph of $G^{[2]}_{A, A^t}$.

In the other direction, we show that $G^{[2]}_{\tilde A, \tilde A^t}$ is an induced subgraph of $G^{[2]}_{A, A^t}$ obtained by removing only leaves of $G^{[2]}_{A, A^t}$, and consequently has the same cycle structure. Note that $G^{[2]}_{\tilde A, \tilde A^t}$ is the induced subgraph of $G^{[2]}_{A, A^t}$ obtained by removing all R-vertices of the form $k^j$ and their incident edges. However, an edge in $G^{[2]}_{A, A^t}$ of the form $(ij, k^j)$ exists if and only if $A_{ik} \neq 0$ which occurs iff $i = k_0$. Thus for each $j$, column $k^j$ of $\overline{\mathbf{L}}^A$ has a single nonzero entry (in row $k_0j$), and the corresponding R-vertex in $G^{[2]}_{A, A^t}$ is a leaf which can be removed without affecting any cycles of $G^{[2]}_{A, A^t}$.
\end{proof}

\end{subsection}

\begin{subsection}{Projections of DSR$^{[2]}$ cycles} 

It is useful to distinguish two types of edges in a DSR$^{[2]}$ graph, $(ij, k^{\min\{i,j\}})$ and $(ij, k^{\max\{i,j\}})$. Recall from Section \ref{sec:C2decomp} that if $i<j$ then  
\[\overline{\mathbf L}^A_{ij,k^i}=A_{jk}\,, \quad\underline{\mathbf L}^{B}_{k^i,ij}=B_{kj},\quad \mbox{whereas} \quad \overline{\mathbf L}^A_{ij,k^j}=-A_{ik}\,, \quad\underline{\mathbf L}^{B}_{k^j,ij}=-B_{ki}\,.
\]

This fact, in conjunction with the definition of $G_{A, B}$, implies that for any edge $(ij, k^i)$ in $G^{[2]}_{A, B}$ there exists an edge $(S_j, R_k)$ in $G_{A, B}$ having the same direction, the same label and the same sign; and similarly,  for any edge $(ij, k^j)$ in $G^{[2]}_{A, B}$ there exists an edge $(S_i, R_k)$ in $G_{A,B}$ having the same direction and label, but {\em opposite sign}. We have defined a mapping of the edge-set of $G^{[2]}_{A, B}$ into the edge-set of $G_{A, B}$.

\begin{definition1}\label{def:proj}
Let $E = E(G_{A, B})$ and $E2 = E(G^{[2]}_{A, B})$ denote the set of edges of $G_{A, B}$ and $G^{[2]}_{A, B}$ respectively.  Similarly let $V = V(G_{A, B})$ and $V2 = V(G^{[2]}_{A, B})$. The mapping $\pi:E2\to E$ defined by
\begin{equation*}
\pi((ij, k^j))=(S_i, R_k)\,\,(\mbox{and  } \pi((ij, k^i))=(S_j, R_k))
\end{equation*} 
is termed the {\em projection map} on edges of {\em DSR}$^{[2]}.$
\end{definition1}

\begin{remark}\label{rem:pi} A few immediate properties of the projection map are worth emphasising.

1. $\pi$ is surjective if $n\ge 2$, but not necessarily injective if $n\ge 3$. Indeed, if $(S_i, R_k)\in E$ then for any $j\in \{1,\ldots,n\}\backslash \{i\}$ we have $\pi((ij, k^j))=(S_i, R_k)$. If there is more than one choice for such $j$ then $\pi$ is not injective. 

2. Following the discussion preceding Definition \ref{def:proj}, $\pi$ acts on edge-signs as follows: 
\begin{equation}\label{eq:projsign}
\mathrm{sign}(\pi(e)) = \left\{\begin{array}{rl}
\mathrm{sign}(e) & \mbox{ if } e=(ij,k^{\min\{i,j\}})\\
- \mathrm{sign}(e) & \mbox{ if } e=(ij,k^{\max\{i,j\}})
\end{array}\right.
\end{equation}   

3. $\pi$ does not extend naturally to vertices of $G^{[2]}_{A, B}$. While $\pi$ may be viewed as mapping the R-vertex $k^l \in V2$ to $R_k \in V$, this interpretation is ambiguous for $S$-vertices $ij\in V2$: edges in $E2$ incident to $ij$ may be mapped to edges in $E$ incident to either $S_i$ or to $S_j$. As a consequence, a walk in $G^{[2]}_{A, B}$ does not necessarily project to a walk in $G_{A, B}$. This is illustrated in the following diagram, which also shows that projection may change edge signs.
\begin{center}
\begin{tikzpicture}
[scale=2.6, place/.style={circle,draw=black!30,fill=black!0,thick,inner sep=0pt,minimum size=5.3mm},
transition/.style={rectangle,draw=black!30,fill=black!0,thick,inner sep=0pt,minimum size=5mm},
place1/.style={circle,draw=white!5,fill=white!9,thick,inner sep=0pt,minimum size=5.5mm},
transition1/.style={rectangle,draw=black!0,fill=black!0,thick,inner sep=0pt,minimum size=5mm},
pre/.style={->,shorten <=1pt,shorten >=1pt,>=stealth',semithick},
pre1/.style={<-,shorten <=1pt,shorten >=1pt,>=stealth',semithick},
post/.style={shorten >=1pt,shorten >=1pt,>=stealth',semithick}];
\node[place] (13) at (-2,0) {$13$};
\node[place] (12) at (0,0) {$12$};
\node[transition] ([11])  at (-1,0)  {$3^1$}
edge[pre1,-]  (13)
edge [pre] (12);
\node[transition] ([21]) at (1,0)  {$2^2$}
edge [<-, dashed, post] (12);

\node[place1] (13) at (-2,-.7) {$S_3$};
\node[place1] (12) at (0,-.7) {$S_2$};
\node[transition1] ([11])  at (-1,-.7)  {$R_3$}
edge[pre1,-] (13)
edge [pre] (12);

\draw[>=stealth',semithick,->] (-1.5,-.1) -- (-1.5,-.6);
\node at (-1.6, -.3) {$\pi$};
\draw[>=stealth',semithick,->] (-.5,-.1) -- (-.5,-.6);
\node at (-.6, -.3) {$\pi$};
\draw[>=stealth',semithick,->] (.5,-.1) -- (1,-.6);
\node at (.6, -.3) {$\pi$};

\node[place1](14) at (.5,-.7) {$S_1$};
\node[transition1] ([14])  at (1.5,-.7)  {$R_2$}
edge[->,pre1] (14);
\end{tikzpicture}
\end{center}
\end{remark}   
Having defined the projection map, it is natural to examine the projection of cycles in $G^{[2]}_{A, B}$.

\begin{prop}\label{prop:proj}
Let $C=(e_1,\ldots, e_N)$ be a cycle in $G^{[2]}_{A, B}$ with edges enumerated so that the initial vertex of $e_1$ is an S-vertex $ij$.
Then there exists a partition of $\{1,\ldots,N\}=\{i_1<\ldots< i_M\}\cup\{j_1<\ldots< j_{N-M}\}$ such that $W'=(\pi(e_{i_1}),\ldots,\pi(e_{i_M}))$ and $W''=(\pi(e_{j_1}),\ldots,\pi(e_{j_{N-M}}))$ satisfy either:

(i) $W'$ and $W''$ are closed walks, one containing $S_i$ and the other containing $S_j;$ or

(ii) $W'$ and $W''$ are walks, one from $S_i$ to $S_j$ and the other from $S_j$ to $S_i$.  
\end{prop}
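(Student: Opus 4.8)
The plan is to track, edge by edge, how the cyclic sequence of edges of $C$ projects under $\pi$ into $G_{A,B}$, exploiting the rigid row structure of $\overline{\mathbf{L}}^A$ and $\underline{\mathbf{L}}^B$ recorded at the start of this section.

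Since $G^{[2]}_{A,B}$ is bipartite, $C$ alternates S- and R-vertices; writing $C=(v_0,e_1,v_1,\ldots,e_N,v_N)$ with $v_N=v_0=ij$, each $v_{2s}$ is an S-vertex and each $v_{2s+1}=k_s^{\,l_s}$ is an R-vertex, with $l_s$ lying in both $v_{2s}$ and $v_{2s+2}$ (indices of $s$ read modulo $N/2$). The local fact on which everything rests is the one noted just before Definition~\ref{def:proj}: an edge of $C$ incident to the R-vertex $k_s^{\,l_s}$ always projects to an edge incident to $R_{k_s}$, whereas an edge of $C$ incident to the S-vertex $v_{2s}=\{p,q\}$ projects to an edge incident to $S_p$ or to $S_q$, according to whether the superscript of its R-endpoint equals $q$ or $p$. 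I would call $v_{2s}$ a \emph{through} vertex if $l_{s-1}=l_s$ and a \emph{crossing} vertex otherwise: at a through vertex both incident edges of $C$ project to edges at the same S-vertex, while at a crossing vertex they project to edges at the two distinct S-vertices $S_{l_{s-1}}$ and $S_{l_s}$.

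Next I would make the partition explicit by a two-colouring. Reading $e_1,\ldots,e_N$ in order (i.e.\ ``cutting'' $C$ at $v_0$, so no cyclic-consistency obstruction arises), colour $e_1$ with colour $1$ and propagate, keeping the colour across an R-vertex or a through S-vertex and switching it across a crossing S-vertex; let $W'$ be the subsequence of projections of the colour-$1$ edges and $W''$ that of the colour-$2$ edges, each in increasing index order (one of the two may be empty, e.g.\ when $C$ has no crossing vertices). Connectivity within a single colour-run is immediate from the local fact above, since $\pi$ preserves direction and consecutive projected edges share their projected R-vertex, or at a through vertex their common projected S-vertex. The crux is to show that the separate runs of a given colour concatenate into a \emph{single} walk; for this I would prove, as the key lemma, that the ``parked index'' (the superscript $l_s$ of the current R-vertex) is constant between consecutive crossing vertices, and that at a crossing vertex $\{p,q\}$ reached with the active run ending at $S_p$ and parked index $q$, the active run pauses exactly at $S_p$ and the dormant run resumes exactly from $S_q$ where it was parked. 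This is a short computation with $\overline{\mathbf{L}}^A_{ij,k^i}=A_{jk}$ and $\overline{\mathbf{L}}^A_{ij,k^j}=-A_{ik}$, and it guarantees that each colour yields a bona fide walk.

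Finally I would read off the endpoints. Since $\pi(e_1)$ and $\pi(e_N)$ are each incident to $S_i$ or to $S_j$, $W'$ begins at one of $S_i,S_j$, and one of $W',W''$ contains $\pi(e_N)$ and so ends at one of $S_i,S_j$; the remaining endpoints are the ``pause points'' at crossing vertices. The parity of the number of crossing S-vertices of $C$ (equivalently, whether the two colours return to their starting roles after one loop of $C$), together with whether $v_0$ is through or crossing, then forces exactly one alternative: an even number of crossings gives two closed walks, one through $S_i$ and one through $S_j$, which is case (i); an odd number gives one walk from $S_i$ to $S_j$ and one from $S_j$ to $S_i$, which is case (ii). The step I expect to be the main obstacle is precisely this last bookkeeping — matching the four sub-cases (parity of crossings $\times$ type of $v_0$) with the two conclusions while keeping each edge's direction consistent under $\pi$ — together with the careful statement and proof of the ``parked index is locally constant'' lemma, which is exactly what prevents the decomposition from fragmenting into more than two walks.
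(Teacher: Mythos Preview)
Your proposal is correct and follows essentially the same route as the paper: the paper also processes $C$ two edges at a time (each S-to-S step through an R-vertex), appending the projected pair to whichever of $W',W''$ it extends, and its running invariant --- that after each step the two partial walks end at the two distinct indices of the current S-vertex --- is exactly your ``parked index'' lemma stated symmetrically. That invariant lets the paper sidestep your four-way endpoint bookkeeping, since at termination both walks end in $\{S_i,S_j\}$ and one simply compares with the starting assignment.
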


\begin{proof} Let $(ij=a_1b_1, \ldots, a_{N/2}b_{N/2}, a_{N/2+1}b_{N/2+1}=ij)$ denote the S-vertices of $C$, and such that $a_i<b_i$ for each $i$. The construction of $W'$ and $W''$ is done by successively appending the walks of length two  $(\pi(e_{1}), \pi(e_{2})), (\pi(e_{3}), \pi(e_{4})),\ldots, (\pi(e_{N-1}), \pi(e_{N}))$ of $G_{A,B}$ to either $W'$ or $W''$. Begin by defining $W'_{1}$ and $W''_{1}$ as the empty one-vertex walks $\{S_i\}$ and $\{S_j\}$, respectively. Next, inductively define, for $r=2,\ldots,N/2+1$, walks $W'_{r}$ and $W''_{r}$ in $G_{A,B}$ whose end vertices are different elements of $\{S_{a_r}, S_{b_r}\}$ in the following fashion. Assume that $S_{a_{r-1}}$ is the end vertex of $W'_{r-1}$ and $S_{b_{r-1}}$ is the end vertex of $W''_{r-1}$ (the other case is similar). The walk $w_r=(e_{2r-3},e_{2r-2})$ has the form $(a_{r-1}b_{r-1}, k^l, a_rb_r)$, where one of the following possibilities holds: (1) $l=a_{r-1}=a_r$ so that $\pi(w_r) = (S_{b_{r-1}},R_k,S_{b_r})$; (2) $l=a_{r-1}=b_r$ so that $\pi(w_r) = (S_{b_{r-1}},R_k,S_{a_r})$; (3) $l=b_{r-1}=a_r$, so that $\pi(w_r) = (S_{a_{r-1}},R_k,S_{b_r})$; (4) $l=b_{r-1}=b_r$ so that $\pi(w_r) = (S_{a_{r-1}},R_k,S_{a_r})$. In cases (1) and (2) define $W''_r=(W''_{r-1},\pi(w_r))$ and let  $W'_{r}=W'_{r-1}$; in cases (3) and (4) define $W'_r=(W'_{r-1},\pi(w_r))$ and let  $W''_{r}=W''_{r-1}$. Either way, $W'_r$ and $W''_r$ are walks ending at different vertices from $\{S_{a_r}, S_{b_r}\}$. It is easy to check that $W'=W'_{N/2+1}$ and $W''=W''_{N/2+1}$ satisfy the conclusion of the proposition.
\end{proof}

\begin{remark}\label{rem:W'W''}
1. In Case (i) of Proposition \ref{prop:proj} $W'$ and $W''$ are closed walks in $G_{A, B}$ and it is not hard to see that choosing a different initial vertex from $C$ in $G^{[2]}_{A, B}$ leads to the same two closed walks (albeit possibly renamed, and traversed from different initial points); in Case (ii) of Proposition \ref{prop:proj} $W'$ and $W''$ traversed successively form a closed walk of length $N$, which we will denote by $W'\sqcup W''.$ Again choosing a different initial vertex from $C$ leads to the same object, traversed from a different initial point.

2. By construction, a pair of consecutive edges of $C$ beginning and terminating at an S-vertex are projected to a pair of consecutive edges in either $W'$ or $W''$. With notation from Proposition \ref{prop:proj}, this means that $i_{2r}=i_{2r-1}+1$ for each $r = 1, \ldots, M/2$, and $j_{2r}=j_{2r-1}+1$ for each $r = 1, \ldots, (N-M)/2$.  In particular, a sequence $(S_a, R_k, S_a)$ traversing an unoriented edge $(S_a, R_k)$ in each direction consecutively cannot occur in $W', W''$ or $W'\sqcup W''$: such a sequence must be the projection of a sequence of edges in $G^{[2]}_{A, B}$ of the form $(ab, k^b, ab)$, traversing an unoriented edge $(ab, k^b)$ back and forth, which clearly cannot occur in a cycle $C$.
\end{remark}

It is transparent from the previous remark that whether projection of a cycle $C$ in $G^{[2]}_{A,B}$ leads to Case (i) or Case (ii) in Proposition \ref{prop:proj} is a property of $C$ alone. A cycle $C$ of $G^{[2]}_{A,B}$ will be called {\bf direct} if it projects as in Case (i) and {\bf twisted} if it projects as in Case (ii). In view of Remark \ref{rem:W'W''} the following notion is well-defined: 

\begin{definition1}\label{def:projcycle}
Let $C$ be a cycle in $G^{[2]}_{A, B}$ and $W',W''$ corresponding walks in $G_{A, B}$ constructed as in Proposition \ref{prop:proj}. The {\em projection} $\pi(C)$ of $C$ is defined as follows:

(i) if $C$ is direct then $\pi(C)=\{W',W''\};$

(ii) if $C$ is twisted then $\pi(C)=W'\sqcup W''$.
\end{definition1}

\begin{example}
\label{example:directtwisted}
We return to the DSR$^{[2]}$ and DSR graphs in Example~\ref{example_motivate0} to illustrate the projections of direct and twisted cycles. The cycle
\[
(13, 2^3, 23, 3^2, 24, 1^4, 14, 3^1, 13)
\]
(with image shaded in the DSR$^{[2]}$ graph on the left) is direct, and projects to the two closed walks
\[
W' = (S_1, R_2, S_2, R_1, S_1), \quad
W'' = (S_3, R_3, S_4, R_3, S_3)\,
\]
(with images shaded in the DSR graph on the right).
\begin{center}
\begin{tikzpicture}
[scale=0.93, place/.style={circle,draw=black!30,fill=black!0,thick,inner sep=0pt,minimum size=5.3mm},
transition/.style={rectangle,draw=black!30,fill=black!0,thick,inner sep=0pt,minimum size=5mm},
pre/.style={->,shorten <=1pt,shorten >=1pt,>=stealth',semithick},
pres/.style={->,shorten <=-1pt,shorten >=-1pt,>=stealth',semithick},
pre1/.style={<-,shorten <=1pt,shorten >=1pt,>=stealth',semithick},
pres1/.style={<-,shorten <=-1pt,shorten >=0pt,>=stealth',semithick}];

\begin{scope}[xshift=-4cm]

\node[transition] (2T3) at (180:1) {$2^3$};
\node[place] (13) at (90:1) {$13$}
edge[pre1,dashed,bend right=25, -] node [fill=white]{$\scriptstyle{b}$}  (2T3)
edge[color=black!40,opacity=0.5,line width=0.2cm,bend right=25, -] (2T3);
\node[transition] (1T3) at (0:1) {$1^3$}
edge[pre1,dashed,bend right=25, -] node [fill=white]{$\scriptstyle{a}$} (13);
\node[place] (23) at (-90:1) {$23$}
edge[pre1,bend right=25, -] node [fill=white]{$\scriptstyle{c}$} (1T3)
edge[pre1,dashed,bend left=25, -] node [fill=white]{$\scriptstyle{d}$} (2T3)
edge[color=black!40,opacity=0.5,line width=0.2cm,bend left=25, -] (2T3);
\end{scope}

\node[transition] (2T4) at (180:1) {$2^4$};
\node[place] (14) at (90:1) {$14$}
edge[pre1,dashed,bend right=25, -] node [fill=white]{$\scriptstyle{b}$}  (2T4);
\node[transition] (1T4) at (0:1) {$1^4$}
edge[pre1,dashed,bend right=25, -] node [fill=white]{$\scriptstyle{a}$}  (14)
edge[color=black!40,opacity=0.5,line width=0.2cm,bend right=25, -] (14);
\node[place] (24) at (-90:1) {$24$}
edge[pre1,bend right=25, -] node [fill=white]{$\scriptstyle{c}$}  (1T4)
edge[color=black!40,opacity=0.5,line width=0.2cm,bend right=25, -] (1T4)
edge[pre1,dashed,bend left=25, -] node [fill=white]{$\scriptstyle{d}$}  (2T4);

\node[transition] (3T1) at (-2,1.2) {$3^1$}
edge[pre1,bend right=5, -] node [fill=white]{$\scriptstyle{f}$}  (13)
edge[color=black!40,opacity=0.5,line width=0.2cm,bend right=5, -] (13)
edge[pre1,bend left=5, -] node [fill=white]{$\scriptstyle{g}$}  (14)
edge[color=black!40,opacity=0.5,line width=0.2cm,bend left=5, -] (14);

\node[transition] (3T2) at (-2,-1.2) {$3^2$}
edge[pre1,bend left=5, -] node [fill=white]{$\scriptstyle{f}$}  (23)
edge[color=black!40,opacity=0.5,line width=0.2cm,bend left=5, -] (23)
edge[pre1,bend right=5, -] node [fill=white]{$\scriptstyle{g}$}  (24)
edge[color=black!40,opacity=0.5,line width=0.2cm,bend right=5, -] (24);

\node[place] (12) at (-7,0) {$12$};

\node[transition] (2T1) at (-6,1.2) {$2^1$}
edge[pre1,bend left=15, -] node [fill=white]{$\scriptstyle{e}$}  (13)
edge[pre1,bend right=25, -] node [fill=white]{$\scriptstyle{d}$}  (12);
\node[transition] (2T2) at (-6,-1.2) {$2^2$}
edge[pre1,bend right=15, -] node [fill=white]{$\scriptstyle{e}$}  (23)
edge[pre1,dashed,bend left=25, -] node [fill=white]{$\scriptstyle{b}$}  (12);

\end{tikzpicture}
\hspace{1cm}
\begin{tikzpicture}
[scale=0.75, place/.style={circle,draw=black!7,fill=black!0,thick,inner sep=0pt,minimum size=5.5mm},
transition/.style={rectangle,draw=black!7,fill=black!0,thick,inner sep=0pt,minimum size=5mm},
place1/.style={circle,draw=white!5,fill=white!9,thick,inner sep=0pt,minimum size=5.5mm},
transition1/.style={rectangle,draw=black!0,fill=black!0,thick,inner sep=0pt,minimum size=5mm},
pre/.style={->,shorten <=1pt,shorten >=1pt,>=stealth',semithick},
pres/.style={->,shorten <=-1pt,shorten >=-1pt,>=stealth',semithick},
pre1/.style={<-,shorten <=1pt,shorten >=1pt,>=stealth',semithick},
pres1/.style={<-,shorten <=-1pt,shorten >=0pt,>=stealth',semithick}];

\begin{scope}[xshift=-7cm]

\node[transition1] (R1) at (180:1) {$R_1$};
\node[place1] (S2) at (90:1) {$S_2$}
edge[pre1,dashed,bend right=25, -] node [auto, swap]{$\scriptstyle{c}$} (R1)
edge[color=black!40,opacity=0.5,line width=0.2cm,bend right=25, -] (R1);
\node[transition1] (R2) at (0:1) {$R_2$}
edge[pre1,bend right=25, -] node [auto, swap]{$\scriptstyle{d}$}(S2)
edge[color=black!40,opacity=0.5,line width=0.2cm,bend right=25, -] (S2);
\node[place1] (S1) at (-90:1) {$S_1$}
edge[pre1,bend right=25, -] node [auto, swap]{$\scriptstyle{b}$}(R2)
edge[color=black!40,opacity=0.5,line width=0.2cm,bend right=25, -] (R2)
edge[pre1,bend left=25, -] node [auto]{$\scriptstyle{a}$}(R1)
edge[color=black!40,opacity=0.5,line width=0.2cm,bend left=25, -] (R1);
\end{scope}

\node[place1] (S3) at (-4.7,0) {$S_3$}
edge[pre1,-] node [auto, swap]{$\scriptstyle{e}$}(R2);
\node[transition1] (R3) at (-3.3,0) {$R_3$}
edge[pre1,-] node [auto, swap]{$\scriptstyle{f}$}(S3)
edge[color=black!40,opacity=0.5,line width=0.2cm,-] (S3);
\node[place1] (S4) at (-1.9,0) {$S_4$}
edge[pre1,-] node [auto, swap]{$\scriptstyle{g}$}(R3)
edge[color=black!40,opacity=0.5,line width=0.2cm,-] (R3);
\end{tikzpicture}
\end{center}
On the other hand, the cycle
\[
(13, 1^3, 23, 2^2, 12, 2^1, 13)
\]
(with image shaded in the DSR$^{[2]}$ graph on the left) is twisted, and projects to the closed walk
\[
W'\sqcup W'' = (S_1, R_1, S_2, R_2, S_3, R_2, S_1)\,
\]
(with image shaded in the DSR graph on the right).
\begin{center}
\begin{tikzpicture}
[scale=0.93, place/.style={circle,draw=black!30,fill=black!0,thick,inner sep=0pt,minimum size=5.3mm},
transition/.style={rectangle,draw=black!30,fill=black!0,thick,inner sep=0pt,minimum size=5mm},
pre/.style={->,shorten <=1pt,shorten >=1pt,>=stealth',semithick},
pres/.style={->,shorten <=-1pt,shorten >=-1pt,>=stealth',semithick},
pre1/.style={<-,shorten <=1pt,shorten >=1pt,>=stealth',semithick},
pres1/.style={<-,shorten <=-1pt,shorten >=0pt,>=stealth',semithick}];

\begin{scope}[xshift=-4cm]

\node[transition] (2T3) at (180:1) {$2^3$};
\node[place] (13) at (90:1) {$13$}
edge[pre1,dashed,bend right=25, -] node [fill=white]{$\scriptstyle{b}$}  (2T3);
\node[transition] (1T3) at (0:1) {$1^3$}
edge[pre1,dashed,bend right=25, -] node [fill=white]{$\scriptstyle{a}$} (13)
edge[color=black!40,opacity=0.5,line width=0.2cm,bend right=25, -] (13);
\node[place] (23) at (-90:1) {$23$}
edge[pre1,bend right=25, -] node [fill=white]{$\scriptstyle{c}$} (1T3)
edge[color=black!40,opacity=0.5,line width=0.2cm,bend right=25, -] (1T3)
edge[pre1,dashed,bend left=25, -] node [fill=white]{$\scriptstyle{d}$} (2T3);
\end{scope}

\node[transition] (2T4) at (180:1) {$2^4$};
\node[place] (14) at (90:1) {$14$}
edge[pre1,dashed,bend right=25, -] node [fill=white]{$\scriptstyle{b}$}  (2T4);
\node[transition] (1T4) at (0:1) {$1^4$}
edge[pre1,dashed,bend right=25, -] node [fill=white]{$\scriptstyle{a}$}  (14);
\node[place] (24) at (-90:1) {$24$}
edge[pre1,bend right=25, -] node [fill=white]{$\scriptstyle{c}$}  (1T4)
edge[pre1,dashed,bend left=25, -] node [fill=white]{$\scriptstyle{d}$}  (2T4);

\node[transition] (3T1) at (-2,1.2) {$3^1$}
edge[pre1,bend right=5, -] node [fill=white]{$\scriptstyle{f}$}  (13)
edge[pre1,bend left=5, -] node [fill=white]{$\scriptstyle{g}$}  (14);

\node[transition] (3T2) at (-2,-1.2) {$3^2$}
edge[pre1,bend left=5, -] node [fill=white]{$\scriptstyle{f}$}  (23)
edge[pre1,bend right=5, -] node [fill=white]{$\scriptstyle{g}$}  (24);

\node[place] (12) at (-7,0) {$12$};

\node[transition] (2T1) at (-6,1.2) {$2^1$}
edge[pre1,bend left=15, -] node [fill=white]{$\scriptstyle{e}$}  (13)
edge[color=black!40,opacity=0.5,line width=0.2cm,bend left=15, -] (13)
edge[pre1,bend right=25, -] node [fill=white]{$\scriptstyle{d}$}  (12)
edge[color=black!40,opacity=0.5,line width=0.2cm,bend right=25, -] (12);
\node[transition] (2T2) at (-6,-1.2) {$2^2$}
edge[pre1,bend right=15, -] node [fill=white]{$\scriptstyle{e}$}  (23)
edge[color=black!40,opacity=0.5,line width=0.2cm,bend right=15, -] (23)
edge[pre1,dashed,bend left=25, -] node [fill=white]{$\scriptstyle{b}$}  (12)
edge[color=black!40,opacity=0.5,line width=0.2cm,bend left=25, -] (12);

\end{tikzpicture}
\hspace{1cm}
\begin{tikzpicture}
[scale=0.75, place/.style={circle,draw=blue!50,fill=blue!5,thick,inner sep=0pt,minimum size=5.5mm},
transition/.style={rectangle,draw=black!0,fill=black!0,thick,inner sep=0pt,minimum size=5mm},
place1/.style={circle,draw=white!5,fill=white!9,thick,inner sep=0pt,minimum size=5.5mm},
transition1/.style={rectangle,draw=black!0,fill=black!0,thick,inner sep=0pt,minimum size=5mm},
pre/.style={->,shorten <=1pt,shorten >=1pt,>=stealth',semithick},
pres/.style={->,shorten <=-1pt,shorten >=-1pt,>=stealth',semithick},
pre1/.style={<-,shorten <=1pt,shorten >=1pt,>=stealth',semithick},
pres1/.style={<-,shorten <=-1pt,shorten >=0pt,>=stealth',semithick}];

\begin{scope}[xshift=-7cm]

\node[transition1] (R1) at (180:1) {$R_1$};
\node[place1] (S2) at (90:1) {$S_2$}
edge[pre1,dashed,bend right=25, -] node [auto, swap]{$\scriptstyle{c}$} (R1)
edge[color=black!40,opacity=0.5,line width=0.2cm,bend right=25, -] (R1);
\node[transition1] (R2) at (0:1) {$R_2$}
edge[pre1,bend right=25, -] node [auto, swap]{$\scriptstyle{d}$}(S2)
edge[color=black!40,opacity=0.5,line width=0.2cm,bend right=25, -] (S2);
\node[place1] (S1) at (-90:1) {$S_1$}
edge[pre1,bend right=25, -] node [auto, swap]{$\scriptstyle{b}$}(R2)
edge[color=black!40,opacity=0.5,line width=0.2cm,bend right=25, -] (R2)
edge[pre1,bend left=25, -] node [auto]{$\scriptstyle{a}$}(R1)
edge[color=black!40,opacity=0.5,line width=0.2cm,bend left=25, -] (R1);
\end{scope}

\node[place1] (S3) at (-4.7,0) {$S_3$}
edge[pre1,-] node [auto, swap]{$\scriptstyle{e}$}(R2)
edge[color=black!40,opacity=0.5,line width=0.2cm,-] (R2);
\node[transition1] (R3) at (-3.3,0) {$R_3$}
edge[pre1,-] node [auto, swap]{$\scriptstyle{f}$}(S3);
\node[place1] (S4) at (-1.9,0) {$S_4$}
edge[pre1,-] node [auto, swap]{$\scriptstyle{g}$}(R3);
\end{tikzpicture}
\end{center}
\end{example}

\begin{remark}\label{rem:extendpi}
 Let $C$ be a cycle in $G^{[2]}_{A, B}$.

1. If $C$ is twisted then $|\pi(C)|=|C|,$ whereas if $C$ is direct and $\pi(C)=\{W',W''\}$ then $|C|=|W'|+|W''|$. 

2. In the case that $C$ is direct, various scenarios are possible: one of $W'$ or $W''$ may be empty; $W'$ and $W''$ may be disjoint; they may share vertices or edges; or they might even coincide. To see how one of $W'$ or $W''$ may be empty, consider the following example. Let $A \in \mathbb{R}^{5 \times 4}$ be such that $A_{11},A_{14},A_{31},A_{32},A_{42},A_{43},A_{53},A_{54}$ are positive. We get an 8-cycle $C = (23,2^2,24,3^2,25,4^2,12,1^2,23)$ in $G^{[2]}_{A, A^t}$ which projects to an empty walk $W'$ and an 8-cycle $W''=(S_3,R_2,S_4,R_3,S_5,R_4,S_1,R_1,S_3)$ in $G_{A, A^t}$:
\begin{center}
\begin{tikzpicture}
[scale=1.2, place/.style={circle,draw=black!30,fill=black!0,thick,inner sep=0pt,minimum size=5.3mm},
transition/.style={rectangle,draw=black!30,fill=black!0,thick,inner sep=0pt,minimum size=5mm},
place1/.style={circle,draw=white!5,fill=white!9,thick,inner sep=0pt,minimum size=5.3mm},
transition1/.style={rectangle,draw=black!0,fill=black!0,thick,inner sep=0pt,minimum size=5mm},
pre/.style={->,shorten <=1pt,shorten >=1pt,>=stealth',semithick},
pres/.style={->,shorten <=-1pt,shorten >=-1pt,>=stealth',semithick},
pre1/.style={<-,shorten <=1pt,shorten >=1pt,>=stealth',semithick},
pres1/.style={<-,shorten <=-1pt,shorten >=0pt,>=stealth',semithick}];
\node at (-2,0) {$C:$};
\node[place] (12) at (-1,1) {$12$};
\node[place] (23) at (1,1) {$23$};
\node[place] (25) at (-1,-1) {$25$};
\node[place] (24) at (1,-1) {$24$};
\node[transition] ([12])  at (0,1)  {$1^2$}
edge[pre,-, dashed] node [auto, swap]{\scriptsize$A_{11}$} (12)
edge [pre,-] node [auto]{\scriptsize$A_{31}$} (23);
\node[transition] ([22])  at (1,0)  {$2^2$}
edge[pre,-] node [auto, swap]{\scriptsize$A_{32}$} (23)
edge [pre,-] node [auto]{\scriptsize$A_{42}$} (24);
\node[transition] ([32])  at (0,-1)  {$3^2$}
edge[pre,-] node [auto, swap]{\scriptsize$A_{43}$} (24)
edge [pre,-] node [auto]{\scriptsize$A_{53}$} (25);
\node[transition] ([42])  at (-1,0)  {$4^2$}
edge[pre,-] node [auto, swap]{\scriptsize$A_{54}$} (25)
edge [pre,-, dashed] node [auto]{\scriptsize$A_{14}$} (12);

\node at (3,0) {$W':\varnothing$};

\begin{scope}[xshift=6.5cm]
\node at (-2,0) {$W'':$};
\node[place1] (12) at (-1,1) {$S_1$};
\node[place1] (23) at (1,1) {$S_3$};
\node[place1] (25) at (-1,-1) {$S_5$};
\node[place1] (24) at (1,-1) {$S_4$};
\node[transition1] ([12])  at (0,1)  {$R_1$}
edge[pres,-] node [auto, swap]{\scriptsize$A_{11}$} (12)
edge [pres,-] node [auto]{\scriptsize$A_{31}$} (23);
\node[transition1] ([22])  at (1,0)  {$R_2$}
edge[pres,-] node [auto, swap]{\scriptsize$A_{32}$} (23)
edge [pres,-] node [auto]{\scriptsize$A_{42}$} (24);
\node[transition1] ([32])  at (0,-1)  {$R_3$}
edge[pres,-] node [auto, swap]{\scriptsize$A_{43}$} (24)
edge [pres,-] node [auto]{\scriptsize$A_{53}$} (25);
\node[transition1] ([42])  at (-1,0)  {$R_4$}
edge[pres,-] node [auto, swap]{\scriptsize$A_{54}$} (25)
edge [pres,-] node [auto]{\scriptsize$A_{14}$} (12);
\end{scope}
\end{tikzpicture}
\end{center}
Note that in this case, the index 2 is common to all S-vertices in $C$, and, disregarding signs of edges, the edge-sequence of $W''$ mirrors that of $C$. In situations such as this, we make an abuse of terminology and write $\pi(C)=W''$ rather than $\pi(C)=\{\varnothing, W''\}.$ To see how $W'$ and $W''$ may coincide consider the 20-cycle $C$ illustrated below. For simplicity only S-vertices are depicted:
\begin{center}
\begin{tikzpicture}
[scale=1.6, place/.style={circle,draw=black!30,fill=black!0,thick,inner sep=0pt,minimum size=5.3mm},
transition/.style={rectangle,draw=black!50,fill=black!5,thick,inner sep=0pt,minimum size=5mm},
place1/.style={circle,draw=white!5,fill=white!9,thick,inner sep=0pt,minimum size=5.3mm},
pre/.style={->,shorten <=1pt,shorten >=1pt,>=stealth',semithick},
pres/.style={->,shorten <=-1pt,shorten >=-1pt,>=stealth',semithick},
pre1/.style={<-,shorten <=1pt,shorten >=1pt,>=stealth',semithick},
pres1/.style={<-,shorten <=-1pt,shorten >=0pt,>=stealth',semithick}];

\node at (-1.5,0) {$C:$};
\draw (0,0) circle (1cm);
\node[place] (12) at (180:1) {$12$};
\node[place] (13) at (144:1) {$13$};
\node[place] (23) at (108:1) {$23$};
\node[place] (24) at (72:1) {$24$};
\node[place] (34) at (36:1) {$34$};
\node[place] (35) at (0:1) {$35$};
\node[place] (45) at (-36:1) {$45$};
\node[place] (14) at (-72:1) {$14$};
\node[place] (15) at (-108:1) {$15$};
\node[place] (25) at (-144:1) {$25$};

\begin{scope}[xshift=4cm, scale=.8]
\node at (-2,0) {$W',W'':$};
\draw (0,0) circle (1cm);
\node[place1] (12) at (180:1) {$S_1$};
\node[place1] (13) at (106:1) {$S_2$};
\node[place1] (23) at (36:1) {$S_3$};
\node[place1] (24) at (-36:1) {$S_4$};
\node[place1] (34) at (-106:1) {$S_5$};
\end{scope}
\end{tikzpicture}
\end{center}
\end{remark}

{\bf Identifying whether a cycle is direct or twisted.} If $ij$ and $i'j'$ are two consecutive S-vertices of a cycle $C$ in $G^{[2]}_{A,B}$, then $\{i,j\}\cap\{i',j'\}$ must include at least one element, say $i$. We call the pair $(ij, ij')$ an {\bf inversion} if $(i-j)(i-j') < 0$. Whether a cycle is direct or twisted can be computed by counting inversions:

\begin{prop}\label{prop:parityinv}
A cycle $C$ of a DSR$^{[2]}$ graph $G^{[2]}_{A,B}$ is direct if it has an even number of inversions, and twisted otherwise.
\end{prop}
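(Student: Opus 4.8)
The plan is to track, along the inductive construction of $W'$ and $W''$ from the proof of Proposition~\ref{prop:proj}, a single bit of ``state'' and show that it flips exactly at inversions. Recall that in that construction one fixes the S-vertices $(ij = a_1b_1, a_2b_2, \ldots, a_{N/2}b_{N/2}, a_{N/2+1}b_{N/2+1} = ij)$ of $C$, each written with $a_r < b_r$, and for $r = 1, \ldots, N/2+1$ produces walks $W'_r, W''_r$ in $G_{A,B}$ whose end vertices are the two distinct vertices $S_{a_r}, S_{b_r}$. I would define $s_r \in \{0,1\}$ by $s_r = 0$ if $W'_r$ ends at $S_{a_r}$ (so $W''_r$ ends at $S_{b_r}$) and $s_r = 1$ otherwise. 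Since the construction starts with $W'_1$ ending at $S_{a_1}$ and $W''_1$ ending at $S_{b_1}$ we have $s_1 = 0$, and by Definition~\ref{def:projcycle} and the discussion preceding it, $C$ is direct precisely when $s_{N/2+1} = 0$ and twisted precisely when $s_{N/2+1} = 1$; equivalently, $C$ is twisted iff the number of indices $r$ with $s_r \neq s_{r-1}$ is odd.

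The heart of the argument is the claim: for each $r \in \{2, \ldots, N/2+1\}$, $s_r \neq s_{r-1}$ if and only if the consecutive pair of S-vertices $(a_{r-1}b_{r-1}, a_r b_r)$ is an inversion. Granting this, the number of inversions of $C$ equals $\#\{r : s_r \neq s_{r-1}\}$, and so $C$ is direct iff this number is even, which is exactly the assertion of the proposition. (The count is automatically independent of the starting vertex and the direction of traversal of $C$ — as it must be, since ``direct''/``twisted'' is a property of $C$ alone — and this also follows from the symmetry of the inversion condition $(i-j)(i-j')<0$ in $j,j'$.)

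To establish the claim I would carry out the four-way case analysis already set up in the proof of Proposition~\ref{prop:proj}, according to whether the shared intermediate index $l$ equals $a_{r-1}$ or $b_{r-1}$, and $a_r$ or $b_r$: (1) $l = a_{r-1} = a_r$; (2) $l = a_{r-1} = b_r$; (3) $l = b_{r-1} = a_r$; (4) $l = b_{r-1} = b_r$. In each case the projection of the length-two walk $w_r = (a_{r-1}b_{r-1}, k^l, a_r b_r)$ is recorded there; by the projection rule $\pi((ij,k^{j}))=(S_i,R_k)$, $\pi((ij,k^{i}))=(S_j,R_k)$ for $i<j$, it runs from the index of $a_{r-1}b_{r-1}$ distinct from $l$ to the index of $a_r b_r$ distinct from $l$. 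One then reads off to which of $W'_{r-1}, W''_{r-1}$ this walk is appended, hence the updated state $s_r$; a one-line check gives $s_r = s_{r-1}$ in cases (1), (4) and $s_r \neq s_{r-1}$ in cases (2), (3). On the other hand $l$ is the smaller index of both pairs in case (1), the larger of both in case (4), and the smaller of one and the larger of the other in cases (2), (3); inspecting the inversion condition $(l-\cdot)(l-\cdot)<0$ then shows $(a_{r-1}b_{r-1}, a_r b_r)$ is not an inversion in cases (1), (4) and is an inversion in cases (2), (3). Matching the two tabulations proves the claim.

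The main obstacle is purely bookkeeping: keeping straight the convention that the row/column of $\overline{\mathbf{L}}^A$ indexed by the pair $ij$ with $i<j$ relates the R-vertex $k^{\min}$ to $S_j$ and $k^{\max}$ to $S_i$, so that the projection of a half-edge at an S-vertex lands at the \emph{other} of that vertex's two indices; and verifying that the notion of inversion is insensitive to which common index is called $i$ when two consecutive S-vertices share exactly one index (they cannot share both, by Remark~\ref{rem:W'W''}, and in the degenerate situation that argument excludes the pair is in any case trivially non-inverting and state-preserving). None of the steps touches edge signs or labels, only the index structure, so there is no analytic content.
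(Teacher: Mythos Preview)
Your proof is correct and follows essentially the same approach as the paper: your state bit $s_r$ is exactly the paper's indicator $\mathrm{ind}(a_rb_r)\in\{1,2\}$ (recording which of $W',W''$ currently ends at $S_{a_r}$), and both arguments reduce ``direct versus twisted'' to the parity of the number of flips of this indicator along the cycle. The only difference is that you explicitly carry out the four-case check that the indicator flips precisely at inversions, whereas the paper asserts this without elaboration; your case analysis is accurate.
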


\begin{proof}
Let $C$ be a direct cycle with the sequence of S-vertices $(ij=a_1b_1, a_2b_2,\ldots, a_rb_r=ij)$. Assume that $a_k<b_k$ for all $k$. By the construction in Proposition~\ref{prop:proj}, one element of $\{a_k,b_k\}$ is assigned to $W'$ and the other is assigned to $W''$. Define $\mathrm{ind}(a_kb_k)$ as
$$
\mathrm{ind}(a_kb_k)=
\begin{cases} 
1, &\mbox{if } a_k \mbox{ is assigned to } W' \\
2, & \mbox{if } a_k \mbox{ is assigned to } W''. 
\end{cases} 
$$
Note that $\mathrm{ind}(a_kb_k)\neq \mathrm{ind}(a_{k+1}b_{k+1})$ if and only if the pair $(a_kb_k, a_{k+1}b_{k+1})$ is an inversion. By definition, $C$ is direct if and only if $W'$ is a closed walk, i.e., if $\mbox{ind}(a_rb_r)=\mbox{ind}(a_1b_1),$ which is equivalent to saying that the number of switches between 1 and 2 in the sequence $$\mathrm{ind}(a_1b_1), \mathrm{ind}(a_2b_2), \ldots, \mathrm{ind}(a_rb_r)$$ is even.  
\end{proof}

\begin{example}
The nature of the two cycles in Example \ref{example:directtwisted} is now easily computed: the first contains no inversions and so is direct; the second contains one inversion $(23,12)$ and is therefore twisted.
\end{example}

The parity of a cycle in $G^{[2]}_{A,B}$ relates to the parity of its projection, as shown in the following result:

\begin{prop}\label{prop:parity}
Let $C$ denote a cycle in $G^{[2]}_{A,B}$.

(i) If $C$ is direct and $\pi(C)=\{W',W''\}$ then $P(C)=P(W')P(W'').$

(ii) If $C$ is twisted then $P(C)=-P(\pi(C)).$
\end{prop}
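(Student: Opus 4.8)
The plan is to compare the parity of $C$ with those of its projection by decoupling the length factors $(-1)^{|\cdot|/2}$ from the sign factors, the latter being controlled entirely by the sign-flipping rule \eqref{eq:projsign}.

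First I would fix notation. Write $C=(e_1,\dots,e_N)$ with $N=|C|$ even, enumerated as in Proposition~\ref{prop:proj} so that $e_1$ issues from an S-vertex; then each consecutive pair $(e_{2k-1},e_{2k})$ is a length-two segment $(a_kb_k,\,c_k^{l_k},\,a_{k+1}b_{k+1})$ through an R-vertex, where I label the S-vertices of $C$ so that $a_k<b_k$ and indices are read cyclically. For an edge $e$ of $C$ joining S-vertex $ij$ to R-vertex $k^l$, set $\varepsilon(e)=0$ if $l=\min\{i,j\}$ and $\varepsilon(e)=1$ if $l=\max\{i,j\}$; then by \eqref{eq:projsign} we have $\mathrm{sign}(\pi(e))=(-1)^{\varepsilon(e)}\mathrm{sign}(e)$. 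Let $\mathcal{E}=\sum_{i=1}^N\varepsilon(e_i)$.

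Next I would do the sign-and-length bookkeeping. If $C$ is direct with $\pi(C)=\{W',W''\}$, then by Proposition~\ref{prop:proj} (and Remark~\ref{rem:W'W''}, so that the computation is carried out with multisets, allowing $W'$ and $W''$ to share or repeat edges) the multiset $\{\pi(e_1),\dots,\pi(e_N)\}$ is the disjoint union of the edge-multisets of $W'$ and $W''$, with $|W'|+|W''|=N$. Hence $\mathrm{sign}(W')\mathrm{sign}(W'')=\prod_{i=1}^N\mathrm{sign}(\pi(e_i))=(-1)^{\mathcal{E}}\mathrm{sign}(C)$, and
\[
P(W')P(W'')=(-1)^{(|W'|+|W''|)/2}\mathrm{sign}(W')\mathrm{sign}(W'')=(-1)^{N/2}(-1)^{\mathcal{E}}\mathrm{sign}(C)=(-1)^{\mathcal{E}}P(C).
\]
If $C$ is twisted, then $\pi(C)=W'\sqcup W''$ is a closed walk of length $N$, so $\mathrm{sign}(\pi(C))=\prod_{i=1}^N\mathrm{sign}(\pi(e_i))=(-1)^{\mathcal{E}}\mathrm{sign}(C)$, giving $P(\pi(C))=(-1)^{N/2}(-1)^{\mathcal{E}}\mathrm{sign}(C)=(-1)^{\mathcal{E}}P(C)$. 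Thus both assertions reduce to showing that $\mathcal{E}$ is even exactly when $C$ is direct.

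The crux is therefore a parity count linking $\mathcal{E}$ to inversions. On the segment $(a_kb_k,\,c_k^{l_k},\,a_{k+1}b_{k+1})$ the superscript $l_k$ must be a common index of the two S-vertices — and, these being distinct vertices of $C$, they share exactly one index — so by the min/max convention $\varepsilon(e_{2k-1})=1$ iff $l_k=b_k$, and $\varepsilon(e_{2k})=1$ iff $l_k=b_{k+1}$. Comparing with the definition of an inversion, the pair $(a_kb_k,a_{k+1}b_{k+1})$ is an inversion precisely when the two indices other than $l_k$ lie on opposite sides of $l_k$, i.e. precisely when exactly one of $l_k=b_k$, $l_k=b_{k+1}$ holds, i.e. when $\varepsilon(e_{2k-1})+\varepsilon(e_{2k})$ is odd. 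Summing over $k=1,\dots,N/2$ gives $\mathcal{E}\equiv\#\{\text{inversions of }C\}\pmod 2$, and Proposition~\ref{prop:parityinv} states that this number is even iff $C$ is direct. Combining this with the displayed identity yields $P(C)=P(W')P(W'')$ in the direct case and $P(C)=-P(\pi(C))$ in the twisted case, as required. The main obstacle I anticipate is exactly this last count: making the min/max convention, the cyclic labelling of S-vertices, and the inversion condition line up cleanly on each segment; the intermediate sign/length bookkeeping is routine once \eqref{eq:projsign} is applied, and the remainder is assembly of the earlier results.
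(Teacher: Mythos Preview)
Your proposal is correct and follows essentially the same route as the paper's proof: both track the sign discrepancy between $C$ and its projection via the rule \eqref{eq:projsign}, group edges into consecutive pairs through each R-vertex, identify the per-pair sign flip with the inversion condition, and then invoke Proposition~\ref{prop:parityinv}. Your packaging via the edge-function $\varepsilon$ and the global count $\mathcal{E}$ is a slight notational variant of the paper's per-pair computation, but the argument is the same.
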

\begin{proof} Let $C=(e_1,\ldots, e_N)$ be such that the initial vertex of $e_1$ is an S-vertex,  and let $\{i_1,\ldots, i_M\}$, $\{j_1,\ldots, j_{N-M}\}$ be the partition of $\{1,\ldots N\}$ obtained as in Proposition~\ref{prop:proj}. In view of Remark \ref{rem:W'W''} we  have
\begin{equation}\label{eq:paritypf}
P(W')=(-1)^{\frac{M}{2}}\prod_{r=1}^{\frac{M}{2}}\left[ \mathrm{sign}(\pi(e_{i_{2r-1}}))\mathrm{sign}(\pi(e_{i_{2r-1}+1}))\right].
\end{equation}
(We adopt the convention that an empty product has value $1$.) Equation (\ref{eq:projsign}) implies that, for any $r\in \{1,\ldots, \frac{M}{2}\},$ 
$$\mathrm{sign}(\pi(e_{i_{2r-1}}))\mathrm{sign}(\pi(e_{i_{2r-1}+1}))=-\mathrm{sign}(e_{i_{2r-1}})\mathrm{sign}(e_{i_{2r-1}+1})$$
if the two S-vertices of the pair $e_{i_{2r-1}},e_{i_{2r-1}+1}$ form an inversion, and 
$$\mathrm{sign}(\pi(e_{i_{2r-1}}))\mathrm{sign}(\pi(e_{i_{2r-1}+1}))=\mathrm{sign}(e_{i_{2r-1}})\mathrm{sign}(e_{i_{2r-1}+1})$$
otherwise. Denoting by $\mathrm{inv}(W')$ the number of pairs from $(e_{i_1},e_{i_1+1}),(e_{i_3},e_{i_3+1}),\ldots, (e_{i_{M-1}},e_{i_M})$ whose S-vertices form an inversion, equation (\ref{eq:paritypf}) becomes
\begin{equation}\label{paritypf2}
P(W')=(-1)^{\frac{M}{2}}(-1)^{\mathrm{inv}(W')}\prod_{r=1}^{\frac{M}{2}}\mathrm{sign}(e_{i_{2r-1}})\mathrm{sign}(e_{i_{2r-1}+1}).
\end{equation}  
Multiplying with the corresponding equation for $W''$ yields
\[
P(W')P(W'')=(-1)^{\frac{M}{2}}(-1)^{\mathrm{inv}(W')}(-1)^{\frac{N-M}{2}}(-1)^{\mathrm{inv}(W'')} \prod_{r=1}^{N}\mathrm{sign}(e_r) =(-1)^{\mathrm{inv}(C)}P(C)
\]
where $\mathrm{inv}(C)=\mathrm{inv}(W')+\mathrm{inv}(W'')$ is the number of inversions in the S-vertex sequence of $C.$ It follows from Proposition \ref{prop:parityinv} that if $C$ is direct then $\mathrm{inv}(C)$ is even and part (i) of the conclusion follows. If $C$ is twisted then $\mathrm{inv}(C)$ is odd, $P(\pi(C))=P(W'\sqcup W'')=P(W')P(W'')$ and (ii) follows. 
\end{proof}

Having dealt in Proposition~\ref{prop:parity} with the behaviour of parity under projection we next ask what can be inferred about s-cycles in $G_{A,B}^{[2]}$ from their projections. 

\begin{prop}\label{prop:stoichproj}
Let $C$ be a cycle of $G_{A,B}^{[2]}$. 

(i) If $C$ is twisted then $\pi(C)$ is an s-walk if and only if $C$ is an s-cycle.

(ii) If $C$ is direct, $\pi(C)=\{W',W''\}$, and both $W'$ and $W''$ are s-walks, then $C$ is an s-cycle.
\end{prop}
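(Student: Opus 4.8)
The plan rests on one observation: the projection map $\pi$ preserves edge labels. I would first record this as a labelled analogue of the sign identity~(\ref{eq:projsign}). Comparing the entries of $\overline{\mathbf{L}}^A$ and $\underline{\mathbf{L}}^B$ listed just before Definition~\ref{def:proj} with the construction of $G_{A,B}$, one checks that for every edge $e$ of $G^{[2]}_{A,B}$ the edge $\pi(e)$ of $G_{A,B}$ has the same orientation type as $e$ and satisfies $l(\pi(e)) = l(e)$, with $l(\pi(e)) = \infty$ exactly when $l(e) = \infty$. Concretely, an edge $(ij,k^i)$ with $i<j$ carries the label $|A_{jk}|$ (or $\infty$ if $A_{jk}=0$), which is precisely the label of $\pi((ij,k^i)) = (S_j,R_k)$, and similarly for edges of type $(ij,k^j)$.

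Next I would set up the parity bookkeeping. Enumerate $C=(e_1,\dots,e_N)$ starting at an S-vertex as in Proposition~\ref{prop:proj}; bipartiteness forces $N$ even and makes the odd-indexed edges $e_1,e_3,\dots$ precisely the S-to-R-traversed ones, the even-indexed edges being R-to-S-traversed. From cases (1)--(4) in the proof of Proposition~\ref{prop:proj}, each length-two sub-walk $(e_{2s-1},e_{2s})$ of $C$ runs S-vertex$\to$R-vertex$\to$S-vertex and projects to a length-two walk of the same shape, so $\pi(e_{2s-1})$ is S-to-R-traversed and $\pi(e_{2s})$ is R-to-S-traversed. Each of $W'$, $W''$ (and, when $C$ is twisted, $W'\sqcup W''$) is a concatenation of such length-two walks, hence has even length, and in its natural enumeration starting from an S-vertex its odd-position edges are exactly the images $\pi(e_{2s-1})$ and its even-position edges the images $\pi(e_{2s})$. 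Combined with the label identity, this gives a label-preserving, position-parity-respecting correspondence (of multisets, since $\pi$ need not be injective) between the edges of $C$ and the edges of its projection(s).

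Then the two statements follow. For (i), a twisted $C$ has $\pi(C)=W'\sqcup W''$ consuming all $N$ edges (Remark~\ref{rem:extendpi}), so the correspondence is a bijection of edge multisets between $C$ and $\pi(C)$; finiteness of all labels transfers both ways, the product of labels over odd-position edges of $C$ equals that over odd-position edges of $\pi(C)$, and likewise for even-position edges, so $C$ is an s-cycle iff $\pi(C)$ is an s-walk. For (ii), a direct $C$ has the edges of $C$ splitting, piece by piece, into the disjoint union of the edge multisets of $W'$ and $W''$ (with $|C|=|W'|+|W''|$); if both are s-walks then all their edges, hence all edges of $C$, have finite labels, and multiplying the identity $\prod(\text{S-to-R labels})=\prod(\text{R-to-S labels})$ for $W'$ by the one for $W''$ gives $\prod_s l(\pi(e_{2s-1})) = \prod_s l(\pi(e_{2s}))$, i.e. $\prod_s l(e_{2s-1}) = \prod_s l(e_{2s})$, so $C$ is an s-cycle. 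No converse is available in (ii), since one cannot in general split the single s-cycle identity for $C$ into the two separate identities for $W'$ and $W''$.

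I expect the main obstacle to be organisational rather than conceptual: verifying the label identity $l(\pi(e))=l(e)$ by matching all four entry types from the identities before Definition~\ref{def:proj} (including the $\infty$ case), and carrying the position-parity bookkeeping cleanly through the concatenations — in particular using that each of $W'$, $W''$ has even length so that forming $W'\sqcup W''$ does not disturb the alternation, and reading every product equality as an equality of multisets of labels.
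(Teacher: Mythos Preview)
Your proposal is correct and follows essentially the same route as the paper's proof: both arguments rest on the observation that $\pi$ preserves labels and preserves the S-to-R versus R-to-S orientation of edges, so the product identity defining an s-cycle/s-walk transfers between $C$ and its projection(s). Your write-up is more explicit about the $\infty$-label case and the odd/even position bookkeeping, but the underlying argument is identical.
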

\begin{proof} We may assume that $C$ is directed; otherwise, if all its edges are undirected, we simply choose one of the two possible orientations. By definition, $C$ is an s-cycle if 
\begin{equation}\label{eq:SR=RS}
\prod_{e\in\{\text{S-to-R edges of }C\}}l(e)=\prod_{e\in\{\text{R-to-S edges of }C\}}l(e).
\end{equation}
Since an S-to-R edge of $G_{A,B}^{[2]}$ projects to an S-to-R edge of $G_{A,B}$ with the same label, one has
$$\prod_{e\in\{\text{S-to-R edges of }C\}}l(e)=\prod_{e\in\{\text{S-to-R edges of }\pi(C)\}}l(e)$$ in case (i), and 
$$\prod_{e\in\{\text{S-to-R edges of }C\}}l(e)=\left(\prod_{e\in\{\text{S-to-R edges of }W'\}} l(e) \right)\left(\prod_{e\in\{\text{S-to-R edges of }W''\}}l(e)\right)$$
in case (ii). Similar equalities hold for R-to-S edges and the conclusion is now immediate from (\ref{eq:SR=RS}).
\end{proof}

In conjunction with Lemma \ref{lem:swalks}, Proposition \ref{prop:stoichproj} has an immediate notable consequence:
\begin{cor}\label{cor:scycles[2]}
If $G_{A,B}$ is steady, then the same is true for $G_{A,B}^{[2]}.$ In particular, if $G_{A,B}$ is acyclic, then $G_{A,B}^{[2]}$ is steady.
\end{cor}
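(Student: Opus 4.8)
The plan is to deduce the corollary by chaining together Proposition~\ref{prop:proj}, Proposition~\ref{prop:stoichproj} and Lemma~\ref{lem:swalks}; essentially all of the substantive work has already been done, so the argument is short. Assume $G_{A,B}$ is steady. I would fix an arbitrary cycle $C$ of $G^{[2]}_{A,B}$ and, by Proposition~\ref{prop:proj}, split into the two cases of $C$ being direct or twisted, the goal in each case being to show that $C$ is an s-cycle.

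If $C$ is twisted, then $\pi(C)=W'\sqcup W''$ is a single (nonempty) closed walk in $G_{A,B}$. Since $G_{A,B}$ is steady, Lemma~\ref{lem:swalks} tells us that every closed walk on it is an s-walk, so $\pi(C)$ is an s-walk, and Proposition~\ref{prop:stoichproj}(i) then forces $C$ to be an s-cycle. If $C$ is direct, then $\pi(C)=\{W',W''\}$ where $W'$ and $W''$ are closed walks in $G_{A,B}$. Lemma~\ref{lem:swalks} shows that each nonempty one of these is an s-walk, while an empty walk is trivially an s-walk; hence both are s-walks, and Proposition~\ref{prop:stoichproj}(ii) again makes $C$ an s-cycle. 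As $C$ was arbitrary, this establishes that every cycle of $G^{[2]}_{A,B}$ is an s-cycle, i.e.\ $G^{[2]}_{A,B}$ is steady.

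For the second assertion I would simply note that an acyclic DSR graph vacuously satisfies the definition of \emph{steady} (there are no cycles to check), exactly as in Corollary~\ref{cor:tree-s}, so the first part applies with $G_{A,B}$ acyclic.

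The only point requiring any care — the candidate for the ``main obstacle'', though it is a minor one — is the degenerate situation described in Remark~\ref{rem:extendpi} in which a direct cycle $C$ has one of its projected walks empty: there one cannot invoke Lemma~\ref{lem:swalks} verbatim, since its statement concerns nonempty closed walks, and one must instead observe directly that the empty walk meets the s-walk condition vacuously. Beyond handling this edge case, the proof is a mechanical assembly of results already established.
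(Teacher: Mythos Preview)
Your proposal is correct and takes exactly the same approach as the paper, which treats the corollary as an immediate consequence of Lemma~\ref{lem:swalks} and Proposition~\ref{prop:stoichproj} without spelling out the case split. Your explicit handling of the direct/twisted dichotomy and of the empty-walk edge case is a careful elaboration of what the paper leaves implicit, but the underlying argument is identical.
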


\begin{remark}
\label{rem:allscycles}
The condition that $G_{A,B}$ is steady is often fulfilled in applications to CRNs. Corollary \ref{cor:scycles[2]} shows that when this is the case, verifying that $G_{A,B}^{[2]}$ is odd$^*$ boils down to ruling out odd intersections of e-cycles. 
\end{remark}

\end{subsection}

\begin{subsection}{Liftings of DSR cycles} 

As explained in Remark \ref{rem:pi}, the projection map $\pi$ is not injective;  in particular different cycles in $G^{[2]}_{A, B}$ may project to the same closed walk in $G_{A, B}$. If $W$ is a cycle of $G_{A, B}$ then any cycle $C$ of $G^{[2]}_{A, B}$ that projects to $W$ will be called a {\bf lifting} of $W.$ By way of Definition \ref{def:projcycle} and Remark \ref{rem:extendpi}, either $C$ is twisted or, if $\pi(C)=\{W',W''\}$ then $W'=\varnothing$ or $W''=\varnothing$. In the first case $C$ will be called an {\bf internal lifting} of $W$ and in the second, $C$ is an {\bf external lifting} of $W.$ The next proposition sheds light on this terminology. 

\begin{prop}
\label{prop:cyclelift}
Let $W$ be a cycle in $G_{A, B}$ with vertex sequence $(S_{i_1},R_{j_1},\ldots, S_{i_N}, R_{j_N}, S_{i_1})$.

(i) A cycle $C$ in $G^{[2]}_{A, B}$ is an external lifting of $W$ if and only if there exists $p \not\in\{i_1,\ldots, i_N\}$ such that
\[
C= (i_1p,j_1^p,\ldots,i_Np, j_N^p, i_1p).
\]

(ii) If $C$ is an internal lifting of $W$ then the S-vertices of $C$ are a subset of $\{i_1,\ldots, i_N\}\times\{i_1,\ldots, i_N\}.$
\end{prop}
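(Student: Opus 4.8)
The plan is to read both parts off the construction of $\pi(C)$ given in the proof of Proposition~\ref{prop:proj}, which splits the S-vertex sequence of a cycle $C$ of $G^{[2]}_{A,B}$ into two ``tracks'' $W'$ and $W''$; throughout I keep track of which of the two indices of each S-vertex of $C$ is routed into which track. For the ``if'' direction of (i), I would start from the candidate $C=(i_1p,j_1^p,\ldots,i_Np,j_N^p,i_1p)$ with $p\notin\{i_1,\ldots,i_N\}$ and first check it is a genuine cycle of $G^{[2]}_{A,B}$: since $W$ is a cycle the $i_k$ are distinct and the $j_k$ are distinct, so (using $p\neq i_k$) the listed S- and R-vertices are pairwise distinct; and the edge $(i_kp,j_k^p)$ exists with the same orientation and label as $(S_{i_k},R_{j_k})$ and sign differing only as in (\ref{eq:projsign}), precisely because $(S_{i_k},R_{j_k})$ is an edge of $W$, while $(j_k^p,i_{k+1}p)$ matches the edge $(R_{j_k},S_{i_{k+1}})$ of $W$. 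Running the construction of Proposition~\ref{prop:proj} from the S-vertex $\{i_1,p\}$, the index shared by consecutive S-vertices $\{i_k,p\}$, $\{i_{k+1},p\}$ is $p$ (it is the superscript of the intervening R-vertex, and the other indices $i_k\neq i_{k+1}$ cannot both be shared without the S-vertices coinciding), so each projected pair of edges has the form $(S_{i_k},R_{j_k},S_{i_{k+1}})$; an immediate induction then shows the track containing $p$ is never extended while the other accumulates exactly $W$. Hence $C$ is direct, $\pi(C)=\{W,\varnothing\}$, so $C$ is an external lifting of $W$.

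For the ``only if'' direction of (i), suppose $C$ is an external lifting of $W$; then by the definition preceding Proposition~\ref{prop:cyclelift}, $C$ is direct and $\pi(C)=\{W,\varnothing\}$. Fixing a starting S-vertex of $C$ and running the same construction, one track, say $W'$, is the trivial one-vertex walk $\{S_p\}$ throughout. Since at every step $r$ the walk $W'_r$ ends at one of the two indices of the $r$-th S-vertex of $C$, the index $p$ lies in every S-vertex of $C$; writing these as $\{p,c_1\},\ldots,\{p,c_N\}$, consecutive ones share $p$ (not $c_k$, else they coincide), so each intervening R-vertex has superscript $p$, and the surviving track $W''=W$ reads off as $(S_{c_1},R_{j_1},\ldots,S_{c_N},R_{j_N},S_{c_1})$. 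Comparing with the vertex sequence of $W$ identifies $c_k$ with $i_k$ (after a cyclic relabelling) and the R-vertices with $j_k^p$; and $p\notin\{i_1,\ldots,i_N\}$ since each $\{p,i_k\}$ is a genuine unordered pair. This is the asserted form of $C$.

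For (ii), let $C$ be an internal lifting of $W$, so $C$ is twisted and $\pi(C)=W'\sqcup W''=W$. In the construction of Proposition~\ref{prop:proj}, for the $r$-th S-vertex $\{a_r,b_r\}$ of $C$, one of $S_{a_r},S_{b_r}$ is the terminal vertex of $W'_r$ and the other of $W''_r$; hence $S_{a_r}$ is a vertex of $W'$ and $S_{b_r}$ a vertex of $W''$ (or vice versa). Every vertex of $W'$ and of $W''$ is a vertex of $W'\sqcup W''=W$, whose S-vertices are exactly $S_{i_1},\ldots,S_{i_N}$, so $a_r,b_r\in\{i_1,\ldots,i_N\}$; as $r$ was arbitrary, every S-vertex of $C$ lies in $\{i_1,\ldots,i_N\}\times\{i_1,\ldots,i_N\}$. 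The step I expect to be the main obstacle is the ``only if'' direction of (i): one must extract carefully from the bookkeeping of Proposition~\ref{prop:proj} both that an empty track forces a single fixed index into every S-vertex of $C$, and that the surviving track then pins down the full cyclic edge-sequence of $C$; once the construction of $\pi(C)$ is unwound, the other parts are routine.
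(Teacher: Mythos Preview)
Your proposal is correct and follows essentially the same approach as the paper: all three claims are read off the track-splitting construction of Proposition~\ref{prop:proj}, with part (ii) and the ``if'' direction of (i) matching the paper's argument almost verbatim (the paper is terser, simply saying one can ``easily check by computing $\pi(C)$''). The only difference is in the ``only if'' direction of (i), where the paper argues contrapositively---if $C$ is not of the asserted form then it contains three consecutive S-vertices $(ij,\,ij',\,i'j')$ with no common index, forcing both $W'$ and $W''$ to acquire at least two edges---whereas you argue directly that an empty track forces its fixed index $p$ into every S-vertex of $C$; these are equivalent uses of the same bookkeeping, and your direct route is arguably cleaner since it avoids the small implicit step of producing such a triple.
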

\begin{proof}
(i) In one direction we can easily check by computing $\pi(C)$ that a cycle of the form $C$ is an external lifting of $W$. On the other hand, suppose an external lifting of $W$ gives rise to a cycle whose vertex sequence is not of this form, implying that $C$ contains three consecutive S-vertices $(ij, ij', i'j'),$ with no common element (namely $i,i',j,j'$ are all distinct). It follows from the proof of Proposition \ref{prop:proj} that each of $W'$ and $W''$ must include a vertex from $\{S_i,S_j\}$ and a vertex from $\{S_{i'},S_{j'}\},$ which means that each of $W'$ and $W''$ contains at least two S-vertices and therefore at least two edges. This contradicts the fact that one of them must be empty. 

(ii) Since $C$ is twisted, $W = W' \sqcup W''$. If $ij$ is an S-vertex of $C$ then, by construction, either $S_i$ is an S-vertex of $W'$ and $S_j$ is an S-vertex of $W''$, or vice versa. The conclusion follows.    
\end{proof}

\begin{remark}
\label{rem:liftparity}
It is clear from Proposition~\ref{prop:parity} that if a cycle $C$ in $G^{[2]}_{A, B}$ is an external lifting of a cycle $W$ in $G_{A, B}$, then $P(C) = P(W)$, while if $C$ is an internal lifting of $W$, then $P(C) = -P(W)$.  In other words internal liftings change parity, while external liftings preserve parity of cycles in $G_{A, B}$.
\end{remark}

\begin{remark}
\label{rem:shortcycles}
A cycle $C$ in $G^{[2]}_{A, B}$ of length $4$ must be of the form $(ap, c^p, dp, e^p, ap)$, with $a \ne d$ and $c \ne e$. Thus, by observation $\pi(C)$ is $(S_a, R_c, S_d, R_e, S_a)$, namely a cycle of length $4$ in $G_{A, B}$. As $C$ is clearly direct, it has the same parity as $\pi(C)$. Thus by Proposition~\ref{prop:cyclelift} cycles of length $4$ in $G^{[2]}_{A, B}$ arise precisely as external liftings of cycles of length $4$ in $G_{A, B}$. The same remark applies to cycles of length $2$, although these cannot occur in $G_{A, B}$ or $G^{[2]}_{A, B}$ if $A_{ij}B_{ji} \geq 0$.
\end{remark}

\begin{example}[Example~\ref{example_motivate1} revisited]
We may now argue that $G_{A,I}^{[2]}$ in this example is odd directly from $G_{A,I}$. Consider a cycle $E$ of $G_{A,I}^{[2]},$ and let $W'$ and $W''$ denote the two walks in $G_{A,I}$ that form its projection. Since there are ${4\choose 2}=6$ S-vertices in $G_{A,I}^{[2]},$  the length of $E$ can not exceed $12$. Notice that $G_{A,I}$ has two cycles: an even Hamiltonian cycle $C$, and an odd 4-cycle $D$. In view of Remark \ref{rem:W'W''}(2), neither $W'$ nor $W''$ can traverse back and forward consecutively an unoriented edge of $G_{A,I}$, and $\pi(E)$ must therefore consist of copies of $C$ and $D$. In fact, there is at most one copy of each of $C$ and $D$ in this union: two copies of $C$ would imply the contradiction $\mathrm{length}(E) \ge 16,$ while two copies of $D$ would imply that $E$ contains both liftings of $D$ allowed by Remark~\ref{rem:shortcycles} (where $p$ is $3$ and $4$ respectively), so $E$ cannot be a cycle itself. Now suppose that $W'\cup W''$ has the same edges as $C\cup D$ (counted  with multiplicity). Then the directed path $(S_2, R_2, S_1)$ is traversed twice, and therefore both directed paths $P_1 = (23, 2^3, 13)$ and $P_2 = (24, 2^4, 14)$ must exist in $E$. Further, $(S_4, R_4, S_3)$ must arise as the projection of one of the directed paths $P_3 = (14, 4^1, 13)$ or $P_4 = (24, 4^2, 23)$. However, clearly $P_3$ and $P_1$ cannot both exist in $E$ and similarly $P_4$ and $P_2$ cannot both exist in $E$, a contradiction. We conclude that $E$ is either a lifting of $C$ or a lifting of $D$. Since all liftings of e-cycle $D$ are internal and all liftings of o-cycle $C$ are external, it follows, by Remark~\ref{rem:liftparity}, that  $G_{A,I}^{[2]}$ is odd.
\end{example}

\begin{remark} 
\label{rem:multilift}
Although this could not occur in the previous example, in general the same cycle in $G_{A, B}$ may lift to multiple cycles of different parities in $G^{[2]}_{A, B}$, by lifting both externally and internally. This is illustrated in the following example (edge labels have been omitted):
\begin{center}
\begin{tikzpicture}
[scale=1.2, place/.style={circle,draw=black!30,fill=black!0,thick,inner sep=0pt,minimum size=5.3mm},
transition/.style={rectangle,draw=black!30,fill=black!0,thick,inner sep=0pt,minimum size=5mm},
place1/.style={circle,draw=white!5,fill=white!9,thick,inner sep=0pt,minimum size=5.3mm},
transition1/.style={rectangle,draw=black!0,fill=black!0,thick,inner sep=0pt,minimum size=5mm},
pre/.style={->,shorten <=1pt,shorten >=1pt,>=stealth',semithick},
pres/.style={->,shorten <=-1pt,shorten >=-1pt,>=stealth',semithick},
pre1/.style={<-,shorten <=1pt,shorten >=1pt,>=stealth',semithick},
pres1/.style={<-,shorten <=-1pt,shorten >=0pt,>=stealth',semithick}];
\node at (-1.9,0) {$C_1:$};
\node[place] (12) at (-1,1) {$12$};
\node[place] (23) at (1,1) {$14$};
\node[place] (25) at (-1,-1) {$15$};
\node[place] (24) at (1,-1) {$45$};
\node[transition] ([12])  at (0,1)  {$1^1$}
edge[pre,-] (12)
edge [pre,-] (23);
\node[transition] ([22])  at (1,0)  {$2^4$}
edge[pre,-,dashed] (23)
edge [pre,-]  (24);
\node[transition] ([32])  at (0,-1)  {$3^5$}
edge[pre,-,dashed]  (24)
edge [pre,-,dashed] (25);
\node[transition] ([42])  at (-1,0)  {$4^1$}
edge[pre,-] (25)
edge [pre,-]  (12);

\begin{scope}[xshift=4.2cm]
\node at (-1.9,0) {$C_2:$};
\node[place] (12) at (-1,1) {$23$};
\node[place] (23) at (1,1) {$34$};
\node[place] (25) at (-1,-1) {$35$};
\node[place] (24) at (1,-1) {$13$};
\node[transition] ([12])  at (0,1)  {$1^3$}
edge[pre,-,dashed]  (12)
edge [pre,-]  (23);
\node[transition] ([22])  at (1,0)  {$3^3$}
edge[pre,-] (23)
edge [pre,-,dashed] (24);
\node[transition] ([32])  at (0,-1)  {$2^3$}
edge[pre,-,dashed] (24)
edge [pre,-] (25);
\node[transition] ([42])  at (-1,0)  {$4^3$}
edge[pre,-] (25)
edge [pre,-,dashed] (12);
\end{scope}

\begin{scope}[xshift=-4.2cm]
\node at (-1.7,0) {$W:$};
\node[place1] (12) at (-1,1) {$S_2$};
\node[place1] (23) at (1,1) {$S_4$};
\node[place1] (25) at (-1,-1) {$S_5$};
\node[place1] (24) at (1,-1) {$S_1$};
\node[transition1] ([12])  at (0,1)  {$R_1$}
edge[pre,-]  (12)
edge [pre,-]  (23);
\node[transition1] ([22])  at (1,0)  {$R_3$}
edge[pre,-]  (23)
edge [pre,-] (24);
\node[transition1] ([32])  at (0,-1)  {$R_2$}
edge[pre,-]  (24)
edge [pre,-]  (25);
\node[transition1] ([42])  at (-1,0)  {$R_4$}
edge[pre,-]  (25)
edge [pre,-]  (12);
\end{scope}
\end{tikzpicture}
\end{center}

$W$ is an e-cycle, as is its external lifting $C_2$ (note that it has four inversions). On the other hand, the internal lifting $C_1$ is an o-cycle with one inversion.

\end{remark}
\end{subsection}

\end{section}

\begin{section}{Further applications and limitations of the DSR$^{[2]}$ graph}
\label{sec:final}

We present some further results which illustrate use of the theory developed so far. In particular we show how it is possible in some cases, via the theory in Section~\ref{sec:theory}, to make claims about $(AB)^{[2]}$ by examination of the DSR graph $G_{A,B}$ and without explicit construction of $G^{[2]}_{A, B}$. 

\begin{subsection}{When the DSR graph has three S-vertices or three R-vertices}
\label{sec3sp}

Given $C \in \mathbb{R}^{3 \times m}$ and $A,B^t \in \mathcal{Q}_0(C)$, then $G^{[2]}_{A, B}$ has a particularly nice structure, illustrated {\em generically} in the figure below: 
\begin{center}
\begin{tikzpicture}
[scale=.85, place/.style={circle,draw=black!30,fill=black!0 ,thick,inner sep=0pt,minimum size=5.3mm},
transition/.style={rectangle,draw=black!30,fill=black!0,thick,inner sep=0pt,minimum size=5mm},
pre/.style={<,shorten <=2pt,>=stealth',semithick},
post/.style={shorten >=1pt,>=stealth',semithick}]
\node[place] (12) at (-1.732,-1) {12};
\node[place] (23) at (1.732,-1) {23};
\node[place] (13) at (0, 2) {13};
\node[transition] ([13])  at (1.732,1)  {$1^3$}
edge [dashed,bend right = 20, post] (13)
edge [dashed,bend left = 20, post] (23);
\node[transition] ([12])  at (0,- 2) {$1^2$}
edge [dashed,bend left = 20, post] (12)
edge [bend right = 20, post] (23);
\node[transition] ([11])  at (-1.732,1) {$1^1$}
edge [bend right = 20, post] (12)
edge [bend left = 20, post] (13);
\node[transition] ([23])  at (1.5*1.732,1.5*1)  {$2^3$}
edge [dashed,bend right = 30, post] (13)
edge [dashed,bend left = 30, post] (23);
\node[transition] ([22])  at (1.5*0,- 1.5*2) {$2^2$}
edge [dashed,bend left = 30, post] (12)
edge [bend right = 30, post] (23);
\node[transition] ([21])  at (-1.5*1.732,1.5*1) {$2^1$}
edge [bend right = 30, post] (12)
edge [bend left = 30, post] (13);
\node[rotate=30] at (1.8*1.732,1.8*1) {$\ldots$};
\node at (1.8*0,- 1.8*1.95) {$\vdots$};
\node[rotate=-30] at (-1.8*1.732,1.8*1) {$\ldots$};
\node[transition] ([33])  at (2.1*1.732,2.1*1)  {$m^3$}
edge [dashed,bend right = 40, post] (13)
edge [dashed,bend left = 40, post] (23);
\node[transition] ([32])  at (2.1*0,- 2.1*2) {$m^2$}
edge [dashed,bend left = 40, post] (12)
edge [bend right = 40, post] (23);
\node[transition] ([31])  at (-2.1*1.732,2.1*1) {$m^1$}
edge [bend right = 40, post] (12)
edge [bend left = 40, post] (13);
\end{tikzpicture}
\end{center}
Depending on the particular entries of $A$ and $B,$ some edges may be absent, some edges may become directed, some edges may change sign, and some edges may get the label $\infty.$ Nonetheless, the generic structure of $G^{[2]}_{A, B}$ is always the same and in particular, the degree of any R-vertex is at most 2. This fact implies that no odd intersection of cycles is possible (see Remark~\ref{remoddintersect}) and checking that $G^{[2]}_{A, B}$ is odd$^*$ amounts to showing that all its e-cycles are s-cycles. This has a pleasant translation into a
property of the DSR graph, as follows:

\begin{thm}\label{thm:3sp}
Let $C\in \mathbb{R}^{3 \times m}$ and $A,B^t \in \mathcal{Q}_0(C)$. If $G_{A, B}$ satisfies (i) all e-cycles of length 4 are s-cycles; and (ii) all o-cycles of length 6 are s-cycles, then $G^{[2]}_{A,B}$ is odd$^*$, $(AB)^{[2]}$ is a $P_0$-matrix, and the nonreal spectrum of $AB$ does not intersect $\mathbb{C}_{-}$.
\end{thm}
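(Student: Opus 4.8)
The plan is to prove Theorem~\ref{thm:3sp} by showing that the DSR$^{[2]}$ graph $G^{[2]}_{A,B}=G_{\overline{\mathbf L}^A,\underline{\mathbf L}^B}$ is odd$^*$; everything else is then automatic. Indeed, applying the implication C2$\Rightarrow$C4 of Theorem~\ref{mainP0thm} to the pair $\overline{\mathbf L}^A,\underline{\mathbf L}^B$, together with the factorisation $(AB)^{[2]}=\overline{\mathbf L}^A\underline{\mathbf L}^B$ of~(\ref{eq:J2decomp}), gives that $(AB)^{[2]}$ is a $P_0$-matrix, and Lemma~\ref{J2P0} then gives that the nonreal spectrum of $AB$ avoids $\mathbb C_-$. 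So the whole argument reduces to a statement about cycles in $G^{[2]}_{A,B}$.

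First I would record two structural reductions. Because $n=3$, an R-vertex $k^l$ of $G^{[2]}_{A,B}$ is adjacent only to the at most two S-vertices $ij$ with $l\in\{i,j\}$, so every R-vertex has degree at most $2$; by Remark~\ref{remoddintersect} there are then no odd intersections of cycles, so it suffices to show that \emph{every e-cycle of $G^{[2]}_{A,B}$ is an s-cycle}. Also, with only $\binom 32=3$ S-vertices, and with $A_{ij}B_{ji}\ge 0$ since $A,B^t\in\mathcal Q_0(C)$ (so no length-$2$ cycles arise, cf.\ Remark~\ref{rem:shortcycles}), every cycle of $G^{[2]}_{A,B}$ has length $4$ or $6$. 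The length-$4$ case is the easy one: by Remark~\ref{rem:shortcycles} such a cycle $C$ is an external lifting of a length-$4$ cycle $W_4$ in $G_{A,B}$, with $P(W_4)=P(C)$ by Remark~\ref{rem:liftparity}; if $C$ is an e-cycle then $W_4$ is a length-$4$ e-cycle and hence an s-cycle by hypothesis~(i), and since $C$ is direct with $\pi(C)=\{W_4,\varnothing\}$, Proposition~\ref{prop:stoichproj}(ii) (the empty walk counting vacuously as an s-walk) forces $C$ to be an s-cycle.

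The substantive case is a length-$6$ e-cycle $C$, which visits all of $12,13,23$; counting inversions via Proposition~\ref{prop:parityinv} shows $C$ is twisted, so $\pi(C)$ is a single closed walk of length $6$ in $G_{A,B}$, and by Proposition~\ref{prop:parity}(ii) it has $P(\pi(C))=-P(C)=-1$. By Proposition~\ref{prop:stoichproj}(i) it is then enough to show $\pi(C)$ is an s-walk. One checks that the three S-vertices occurring along $\pi(C)$ are exactly $S_1,S_2,S_3$, each once, so the only way $\pi(C)$ can fail to be a simple cycle is by repeating an R-vertex — in which case $\pi(C)$ contains a subwalk $(R_k,S_i,R_k)$ traversing one edge $e_p=(S_i,R_k)$ forth and back. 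I would split into cases: (a) the three R-vertices of $\pi(C)$ are distinct, so $\pi(C)$ is a simple o-cycle of length $6$, an s-cycle by hypothesis~(ii); (b) exactly two coincide, so deleting the one back-and-forth pair leaves a simple $4$-cycle $\widehat W$ with $P(\widehat W)=+1$ (a back-and-forth pair has parity $-1$), so $\widehat W$ is a length-$4$ e-cycle and an s-cycle by hypothesis~(i); (c) all three coincide, so $\pi(C)$ reduces to the empty walk after deleting back-and-forth pairs. In each case the s-walk condition for $\pi(C)$ reduces, once the pendant edges $e_p$ cancel across its two label-products, to the s-cycle condition for the reduced cycle (or becomes trivial), so $\pi(C)$ is an s-walk and $C$ is an s-cycle.

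The hard part will be verifying that each deleted pendant edge $e_p$ really carries a \emph{finite} label, since an edge labelled $\infty$ could not be cancelled between the two label-products. This is where I would invoke the fact that the projection map $\pi$ preserves the S-to-R / R-to-S orientation and the label of a traversed edge (the remarks preceding Definition~\ref{def:proj}): in $\pi(C)$ the edge $e_p$ is traversed once in the R-to-S direction, so it is an R-to-S edge of $G_{A,B}$, and every R-to-S edge of a DSR graph has finite label $|A_{ij}|$. With that in hand, combining the length-$4$ and length-$6$ analyses shows every e-cycle of $G^{[2]}_{A,B}$ is an s-cycle, so $G^{[2]}_{A,B}$ is odd$^*$, and the conclusions on $(AB)^{[2]}$ and on the spectrum of $AB$ follow as indicated above.
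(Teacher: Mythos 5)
Your proposal is correct and follows essentially the same route as the paper's proof: reduce odd$^*$ to ``every e-cycle is an s-cycle'' via the degree-$2$ bound on R-vertices, treat $4$-cycles as external liftings covered by hypothesis (i), and classify the projections of $6$-cycles (twisted, odd closed walks of length $6$) into the same three cases -- simple $6$-cycle, $4$-cycle plus a doubly-traversed undirected edge, and all R-vertices coinciding -- that the paper labels (c), (b), (a). Your explicit finite-label check for the doubly-traversed edges is dealt with in the paper implicitly (via Corollary~\ref{cor:tree-s} and the observation that an edge traversed in both directions must be undirected), so the substance of the two arguments is the same.
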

\begin{proof}
If $G^{[2]}_{A,B}$ is odd$^*$, then the remaining conclusions follow from Theorem~\ref{mainP0thm} and Lemma~\ref{J2P0}. Since R-vertices of $G^{[2]}_{A,B}$ have degree at most 2, in order to show that $G^{[2]}_{A,B}$ is odd$^*$ we need only show that all e-cycles of $G^{[2]}_{A,B}$ are s-cycles. As explained in Remark \ref{rem:shortcycles}, all 4-cycles of $G^{[2]}_{A,B}$ are external liftings of 4-cycles of $G_{A, B}$, whose parity (and edge-labels) they inherit. It follows from (i) that all e-cycles of length 4 in $G^{[2]}_{A,B}$ are s-cycles. On the other hand, any 6-cycle $C$ of $G^{[2]}_{A,B}$ includes all S-vertices of $G_{A,B}^{[2]}$, namely $12, 23$ and $13$, and therefore has exactly one inversion. Thus $C$ is twisted (Proposition~\ref{prop:parityinv}), $\pi(C)$ is a closed walk, and moreover (Propositions~\ref{prop:parity}~and~\ref{prop:stoichproj}) $C$ is an e-cycle if and only if $\pi(C)$ is odd, and $C$ is an s-cycle if and only if $\pi(C)$ is an s-walk. 

We may distinguish three possibilities for a 6-cycle $C$, according to the three possible structures of its projection, illustrated below. Here vertices depicted as distinct are distinct, and the projected closed walks are obtained by following the arrows. As the hypothesis that $A,B^t \in \mathcal{Q}_0(C)$ means that no pair of vertices in $G_{A, B}$ are connected by more than one edge, two arrows between a pair of vertices mean that an undirected edge is traversed twice:
\begin{center}
\begin{tikzpicture}
[scale=1.2, place/.style={circle,draw=blue!50,fill=blue!5,thick,inner sep=0pt,minimum size=5.5mm},
transition/.style={rectangle,draw=black!50,fill=black!5,thick,inner sep=0pt,minimum size=5mm},
pre/.style={->,shorten <=1pt,shorten >=1pt,>=stealth',semithick},
pres/.style={->,shorten <=-1pt,shorten >=-1pt,>=stealth',semithick},
pre1/.style={<-,shorten <=1pt,shorten >=1pt,>=stealth',semithick},
pres1/.style={<-,shorten <=-1pt,shorten >=0pt,>=stealth',semithick}];

\begin{scope}[xshift=4.5cm]
\node at (-1.6,1.4) {$(c)$};
\node (R1) [transition, draw=black!30, fill=black!0] at (30:1.2) {$R$};
\node (R2) [transition, draw=black!30, fill=black!0] at (-90:1.2){$R$};
\node (R3) [transition, draw=black!30, fill=black!0] at (150:1.2) {$R$};
\node (12) at (-30:1.2) {$S$};
\node (23) at (-150:1.2) {$S$};
\node (25) at (90:1.2) {$S$};
\draw[->, >=stealth', semithick,shorten <=1pt,shorten >=1pt,bend left=20] (R1) to (12);
\draw[->, >=stealth', semithick,shorten <=1pt,shorten >=1pt,bend left=20] (12) to(R2);
\draw[->, shorten <=1pt,shorten >=1pt,>=stealth', semithick,bend left=20] (R2) to (23);
\draw[->, shorten <=1pt,shorten >=1pt,>=stealth', semithick,bend left=20] (23) to (R3);
\draw[->, shorten <=1pt,shorten >=1pt,>=stealth', semithick,bend left=20] (R3) to (25);
\draw[->, shorten <=1pt,shorten >=1pt,>=stealth', semithick,bend left=20] (25) to (R1);
\end{scope}

\begin{scope}[xshift=0cm]
\node at (-1.6,1.4) {$(b)$};
\node[transition, draw=black!30, fill=black!0] at (0,0) {$R$};
\node (R2) [transition, draw=black!30, fill=black!0] at (-90:1.2) {$R$};
\node (12) at (-30:1.2) {$S$};
\node (23) at (-150:1.2) {$S$};
\node (25) at (90:1.2) {$S$};
\draw[->, >=stealth', semithick,shorten <=1pt,shorten >=1pt, bend right=6] (25) to (30:.25);
\draw[->, >=stealth', semithick,shorten <=1pt,shorten >=1pt,bend right=15] (30:.25) to(12);
\draw[->, shorten <=1pt,shorten >=1pt,>=stealth', semithick,bend right=15] (12) to (R2);
\draw[->, shorten <=1pt,shorten >=1pt,>=stealth', semithick,bend right=15] (R2) to (23);
\draw[->, shorten <=1pt,shorten >=1pt,>=stealth', semithick,bend right=15] (23) to (150:.25);
\draw[->, shorten <=1pt,shorten >=1pt,>=stealth', semithick,bend right=6] (150:.25) to (25);
\end{scope}

\begin{scope}[xshift=-4.5cm]
\node at (-1.6,1.4) {$(a)$};
\node[transition, draw=black!30, fill=black!0] at (0,0) {$R$};
\node (12) at (-30:1.2) {$S$};
\node (23) at (-150:1.2) {$S$};
\node (25) at (90:1.2) {$S$};
\draw[->, >=stealth', semithick,shorten <=1pt,shorten >=1pt,bend right=6] (25) to(30:.25);
\draw[->, >=stealth', semithick,shorten <=1pt,shorten >=1pt,bend right=6] (30:.25) to(12);
\draw[->, shorten <=1pt,shorten >=1pt,>=stealth', semithick,bend right=6] (12) to (-90:.25);
\draw[->, shorten <=1pt,shorten >=1pt,>=stealth', semithick,bend right=6] (-90:.25) to (23);
\draw[->, shorten <=1pt,shorten >=1pt,>=stealth', semithick,bend right=6] (23) to (150:.25);
\draw[->, shorten <=1pt,shorten >=1pt,>=stealth', semithick,bend right=6] (150:.25) to (25);
\end{scope}
\end{tikzpicture}
\end{center}
We now need to show that the hypotheses of the theorem imply that odd walks in $G_{A, B}$ of types (a), (b) and (c) are closed s-walks. Type (a) is an acyclic walk and is structurally a closed s-walk (see Corollary~\ref{cor:tree-s}). Type (b) is composed of an undirected edge traversed twice and a 4-cycle. If the walk is odd then the 4-cycle must be even and thus an s-cycle by hypothesis (i); it follows that the walk itself is a closed s-walk. Finally, type (c) is covered by hypothesis (ii).
\end{proof}

As we might expect, there is a dual result to Theorem~\ref{thm:3sp} for matrices with three columns:
\begin{cor}\label{thm:3reac}
Let $C\in \mathbb{R}^{m \times 3}$ and $A,B^t \in \mathcal{Q}_0(C)$. If $G_{A, B}$ satisfies (i) all e-cycles of length 4 are s-cycles and (ii) all o-cycles of length 6 are s-cycles, then the nonreal spectrum of $AB$ does not intersect $\mathbb{C}_{-}$.
\end{cor}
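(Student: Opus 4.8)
The plan is to reduce the statement about $AB$ with $A$ having three \emph{columns} to the already-proved Theorem~\ref{thm:3sp} about matrix products whose left factor has three \emph{rows}, by transposing. The key observation is that $\mathrm{Spec}(AB)\backslash\{0\} = \mathrm{Spec}(BA)\backslash\{0\} = \mathrm{Spec}((A^tB^t)^t)\backslash\{0\}$, so that the nonreal part of $\mathrm{Spec}(AB)$ coincides with the nonreal part of $\mathrm{Spec}(A^tB^t)$ (a matrix and its transpose have identical spectra). Thus it suffices to show that the nonreal spectrum of $A^tB^t$ does not intersect $\mathbb{C}_{-}$, and here $A^t \in \mathbb{R}^{3 \times m}$ and $B^t \in \mathbb{R}^{m \times 3}$, i.e. we are in the setting of Theorem~\ref{thm:3sp}.

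First I would set $C' = C^t \in \mathbb{R}^{3 \times m}$ and note that $A^t, (B^t)^t = B \in \mathcal{Q}_0(C')$ is equivalent to $A, B^t \in \mathcal{Q}_0(C)$ (transposition commutes with taking the qualitative class and its closure). So Theorem~\ref{thm:3sp} applies to the pair $(A^t, B^t)$ provided its graph-theoretic hypotheses hold for $G_{A^t, B^t}$. The second step is to observe that $G_{A^t, B^t}$ is obtained from $G_{A, B}$ simply by interchanging the roles of S-vertices and R-vertices: indeed, by definition $G_{A, B}$ has arc $R_jS_i$ iff $A_{ij}\neq 0$ and arc $S_iR_j$ iff $B_{ji}\neq 0$, while $G_{A^t, B^t}$ has arc $R_iS_j$ iff $(A^t)_{ji} = A_{ij} \neq 0$ and arc $S_jR_i$ iff $(B^t)_{ij} = B_{ji}\neq 0$; so the two graphs are isomorphic as signed bipartite digraphs, the isomorphism swapping $S_i \leftrightarrow R_i$. (Edge labels transfer appropriately: an edge carrying a label from $A$ becomes an edge carrying the corresponding label from $A^t$, and an $\infty$-labelled edge stays $\infty$-labelled.) Consequently a closed walk in one graph corresponds to a closed walk in the other of the same length, same sign, hence same parity, and the s-walk condition is preserved since the multiset of finite edge-labels is unchanged. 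In particular, e-cycles of length $4$ correspond to e-cycles of length $4$, o-cycles of length $6$ correspond to o-cycles of length $6$, and s-cycles correspond to s-cycles.

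The third step is then immediate: hypotheses (i) and (ii) for $G_{A, B}$ translate verbatim into hypotheses (i) and (ii) for $G_{A^t, B^t}$. Theorem~\ref{thm:3sp} applied to the pair $(A^t, B^t)$ yields that $(A^tB^t)^{[2]}$ is a $P_0$-matrix and that the nonreal spectrum of $A^tB^t$ does not intersect $\mathbb{C}_{-}$. By Lemma~\ref{spectranposelemma} together with the fact that $A^tB^t = (BA)^t$ has the same spectrum as $BA$, and $\mathrm{Spec}(BA)\backslash\{0\} = \mathrm{Spec}(AB)\backslash\{0\}$, we conclude that the nonreal spectrum of $AB$ does not intersect $\mathbb{C}_{-}$, as required. (Note that unlike in Theorem~\ref{thm:3sp} we make no claim that $(AB)^{[2]}$ itself is a $P_0$-matrix, since in general $(AB)^{[2]}$ and $(A^tB^t)^{[2]} = (BA)^{[2]}$ need not share this property — only the nonreal-spectrum conclusion survives passage through Lemma~\ref{spectranposelemma}; this is exactly parallel to part (ii) of Proposition~\ref{3sp1}.)

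I do not expect any serious obstacle here: the only point requiring a little care is the bookkeeping that $G_{A^t, B^t}$ really is the ``S/R-swapped'' copy of $G_{A, B}$ with labels and signs faithfully transported, so that the cycle-structure hypotheses transfer; once that is granted, everything reduces to Theorem~\ref{thm:3sp} and Lemma~\ref{spectranposelemma}. The mild subtlety worth flagging explicitly is that the labelling convention attaches labels to entries of the left factor, so in passing from $(A,B)$ to $(A^t,B^t)$ the ``labelled'' factor changes from $A$ to $A^t$; but since $|A_{ij}| = |(A^t)_{ji}|$ this induces exactly the label-preserving graph isomorphism described above, and s-cycles are characterised purely in terms of these magnitudes, so the correspondence goes through.
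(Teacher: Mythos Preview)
Your proposal is correct and follows essentially the same route as the paper: transpose to put the three-dimensional factor on the left, observe that the cycle conditions (i) and (ii) transfer between $G_{A,B}$ and $G_{A^t,B^t}$ under the S/R swap, apply Theorem~\ref{thm:3sp} to $(A^t,B^t)$, and conclude via Lemma~\ref{spectranposelemma}. The paper's proof is simply a more compressed version of exactly this argument; the only minor wrinkle is that the vertex bijection $S_i\leftrightarrow R_i$ actually reverses arc orientations rather than preserving them, but since cycle length, sign, parity and the s-cycle condition are all invariant under reversal, this does not affect anything you need.
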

\begin{proof}
Observe that $G_{A, B}$ satisfies conditions (i) and (ii) on its cycles if and only if $G_{A^t, B^t}$ does. By Theorem~\ref{thm:3sp}, the nonreal spectrum of $A^tB^t$ does not intersect $\mathbb{C}_{-}$. Since $AB$ has the same nonzero spectrum as $A^tB^t$ (Lemma~\ref{spectranposelemma}), the result follows. 
\end{proof}

\begin{remark}
The conclusions in Example~\ref{ex3row} and Proposition~\ref{3sp1} are an immediate consequence of Theorem~\ref{thm:3sp}. 
\end{remark}

\begin{subsection}{When the DSR graph is steady with a vertex set of low degree}

Whereas the results of Section~\ref{sec3sp} dealt with matrices restricted by size, we now consider matrices of arbitrary dimension but with restrictions on the number of nonzero entries in rows/columns. The following generalisation of Proposition~\ref{prop:switchRS} is immediate:

\begin{thm}
\label{thm:alls}
Let $A \in \mathbb{R}^{n \times m}$ be such that $G_{A,A^t}$ is steady. 
\begin{itemize}
\item[(i)] If $A$ has no more than two nonzero entries in each column then, for any $B \in \mathcal{Q}_0(A^t)$, $AB$ and $(AB)^{[2]}$ are $P_0$-matrices and so $AB$ is positive semistable. 
\item[(ii)] If $A$ has no more than two nonzero entries in each row then, for any $B \in \mathcal{Q}_0(A^t)$, $AB$ is positive semistable.
\end{itemize}
\end{thm}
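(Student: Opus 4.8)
The plan is to mirror the proof of Proposition~\ref{prop:switchRS}: once we know that $G_{A,B}$ and $G^{[2]}_{A,B}$ are both odd$^*$, parts (i) and (ii) fall out of Theorem~\ref{mainP0thm}, Lemma~\ref{lemposstable0}, and (for (ii)) Lemma~\ref{spectranposelemma}. The only place where the present hypothesis differs from that of Proposition~\ref{prop:switchRS} is in establishing steadiness: there all edge-labels were $1$, whereas here steadiness of $G_{A,A^t}$ is assumed outright and steadiness of $G^{[2]}_{A,A^t}$ is deduced from Corollary~\ref{cor:scycles[2]}. Everything else, including the degree bounds on R-vertices, carries over.

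I would first reduce to $B\in\mathcal{Q}(A^t)$: since the set of $P_0$-matrices is closed and the maps $B\mapsto AB$ and $B\mapsto(AB)^{[2]}=\overline{\mathbf{L}}^A\underline{\mathbf{L}}^B$ are continuous, it suffices to prove the two $P_0$-matrix assertions for $B$ in the qualitative class, and positive semistability of $AB$ then follows from Lemma~\ref{lemposstable0}. For $B\in\mathcal{Q}(A^t)$ one has $G_{A,B}=G_{A,A^t}$ and $G^{[2]}_{A,B}=G^{[2]}_{A,A^t}$ by Remarks~\ref{remsignpat} and~\ref{remsignpat1}.

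For part (i): the column hypothesis says every R-vertex of $G_{A,A^t}$ has degree at most $2$, so by Remark~\ref{remoddintersect} no two of its cycles have odd intersection; since $G_{A,A^t}$ is steady its e-cycles are s-cycles, so $G_{A,A^t}$ is odd$^*$ (condition C6), and Theorem~\ref{mainP0thm} gives that $AB$ is a $P_0$-matrix. For $(AB)^{[2]}=\overline{\mathbf{L}}^A\underline{\mathbf{L}}^B$ I would invoke Corollary~\ref{cor:scycles[2]} to get that $G^{[2]}_{A,A^t}$ is steady, and then recall from the proof of Proposition~\ref{prop:switchRS} that an edge $(\{i,l\},k^l)$ of $G^{[2]}_{A,A^t}$ exists exactly when $A_{ik}\neq 0$; hence the neighbours of the R-vertex $k^l$ are the S-vertices $\{i,l\}$ with $i\neq l$ and $A_{ik}\neq 0$, of which there are at most two. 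Thus every R-vertex of $G^{[2]}_{A,A^t}$ has degree at most $2$, no two of its cycles have odd intersection, steadiness makes its e-cycles s-cycles, and so $G^{[2]}_{A,A^t}=G_{\overline{\mathbf{L}}^A,\underline{\mathbf{L}}^{A^t}}$ is odd$^*$ --- this is condition C2 for the matrix pair $(\overline{\mathbf{L}}^A,\underline{\mathbf{L}}^{A^t})$. Then the implication C2~$\Rightarrow$~C4 of Theorem~\ref{mainP0thm} gives that $\overline{\mathbf{L}}^A C'$ is a $P_0$-matrix for all $C'\in\mathcal{Q}_0(\underline{\mathbf{L}}^{A^t})$; taking $C'=\underline{\mathbf{L}}^B$, which lies in $\mathcal{Q}(\underline{\mathbf{L}}^{A^t})$ because $B\in\mathcal{Q}(A^t)$, shows that $(AB)^{[2]}$ is a $P_0$-matrix. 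With both $AB$ and $(AB)^{[2]}$ being $P_0$-matrices, Lemma~\ref{lemposstable0} gives that $AB$ is positive semistable.

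For part (ii) I would apply part (i) to $A^t$: if $A$ has at most two nonzero entries per row then $A^t$ has at most two per column, and $G_{A^t,(A^t)^t}=G_{A^t,A}$ is obtained from $G_{A,A^t}$ by interchanging the S- and R-vertex classes, which leaves the edge set, the labels, and the alternating $2$-colouring of the edges of each cycle unchanged and hence preserves steadiness. So $A^tB^t$ and $(A^tB^t)^{[2]}$ are $P_0$-matrices and $A^tB^t$ is positive semistable; since $\mathrm{Spec}(AB)\backslash\{0\}=\mathrm{Spec}(BA)\backslash\{0\}=\mathrm{Spec}(A^tB^t)\backslash\{0\}\subseteq\overline{\mathbb{C}_{+}}$ by Lemma~\ref{spectranposelemma} and $0\in\overline{\mathbb{C}_{+}}$, it follows that $AB$ is positive semistable. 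I do not expect a genuine obstacle: the argument is a routine adaptation of Proposition~\ref{prop:switchRS}, the only new ingredient being the single appeal to Corollary~\ref{cor:scycles[2]}. The points deserving care are the degree count for the R-vertices of $G^{[2]}_{A,A^t}$ and the observation that being steady is symmetric under swapping the two vertex classes of an SR graph.
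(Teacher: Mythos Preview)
Your proposal is correct and follows essentially the same route as the paper: the paper's proof simply notes that $G_{A,A^t}$ is steady by assumption, that $G^{[2]}_{A,A^t}$ is steady by Corollary~\ref{cor:scycles[2]}, and then refers back verbatim to the proof of Proposition~\ref{prop:switchRS}. Your write-up is more explicit than the paper's---in particular you spell out why steadiness of $G_{A,A^t}$ passes to $G_{A^t,A}$ for part~(ii), a point the paper leaves implicit---but the substance is identical.
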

\begin{proof}
$G_{A,A^t}$ is steady by assumption and $G_{A,A^t}^{[2]}$ is steady by Corollary \ref{cor:scycles[2]}. The proof now proceeds exactly as that of Proposition~\ref{prop:switchRS}.
\end{proof}

\end{subsection}

\begin{subsection}{When the DSR graph is acyclic}

\begin{thm}
\label{thm:tree}
Let $C \in \mathbb{R}^{n \times m}$ be such that $G_{C,C^t}$ is acyclic. Then, for each $A \in \mathcal{Q}_0(C)$ and $B \in \mathcal{Q}_0(C^t)$, $AB$ and $(AB)^{[2]}$ are $P_0$-matrices and $AB$ is positive semistable. 
\end{thm}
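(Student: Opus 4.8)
The plan is to derive both statements — that $AB$ and $(AB)^{[2]}$ are $P_0$-matrices — from Lemma~\ref{lem:scyclePmat}, after which positive semistability of $AB$ is immediate from Lemma~\ref{lemposstable0}. Two preliminary observations underpin everything. First, since $A,B^t\in\mathcal{Q}_0(C)$, the underlying graph of $G_{A,B}$ is a subgraph of that of $G_{C,C^t}$ (and $A_{ij}B_{ji}\ge 0$ for all $i,j$, so no $2$-cycles occur); more generally, for every $D\in\mathcal{Q}(C)$ the graph $G_{D,D^t}$ shares its underlying bipartite graph with $G_{C,C^t}$, hence is acyclic, hence vacuously steady. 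Second, from the explicit form (\ref{eq:matrep}) the sign pattern of $\overline{\mathbf{L}}^X$ depends only on the sign pattern of $X$, and $X\mapsto\overline{\mathbf{L}}^X$ is linear, hence continuous; moreover $\underline{\mathbf{L}}^B=(\overline{\mathbf{L}}^{B^t})^t$, so $(\underline{\mathbf{L}}^B)^t=\overline{\mathbf{L}}^{B^t}$.

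For the claim that $AB$ is a $P_0$-matrix I would apply Lemma~\ref{lem:scyclePmat} with $M'=C$ and $\mathcal{M}=\mathcal{Q}(C)$: every $M\in\mathcal{M}$ has $G_{M,M^t}$ steady by the first observation, while $\overline{\mathcal{M}}=\mathcal{Q}_0(C)$ contains both $A$ and $B^t$, so $AB$ is a $P_0$-matrix.

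The substantive step is $(AB)^{[2]}$, for which I would use the factorisation $(AB)^{[2]}=\overline{\mathbf{L}}^A\,\underline{\mathbf{L}}^B$ of (\ref{eq:J2decomp}) and again invoke Lemma~\ref{lem:scyclePmat}, this time with $M'=\overline{\mathbf{L}}^C$ and $\mathcal{M}=\{\,\overline{\mathbf{L}}^D : D\in\mathcal{Q}(C)\,\}$. The second observation gives $\mathcal{M}\subseteq\mathcal{Q}(M')$. For $M=\overline{\mathbf{L}}^D\in\mathcal{M}$ one has $M^t=\underline{\mathbf{L}}^{D^t}$, so $G_{M,M^t}=G_{\overline{\mathbf{L}}^D,\,\underline{\mathbf{L}}^{D^t}}=G^{[2]}_{D,D^t}$, which is steady by Corollary~\ref{cor:scycles[2]} because $G_{D,D^t}$ is acyclic. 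Continuity of $X\mapsto\overline{\mathbf{L}}^X$ together with $A,B^t\in\mathcal{Q}_0(C)=\overline{\mathcal{Q}(C)}$ gives $\overline{\mathbf{L}}^A\in\overline{\mathcal{M}}$ and $(\underline{\mathbf{L}}^B)^t=\overline{\mathbf{L}}^{B^t}\in\overline{\mathcal{M}}$, so Lemma~\ref{lem:scyclePmat} yields that $\overline{\mathbf{L}}^A\,\underline{\mathbf{L}}^B=(AB)^{[2]}$ is a $P_0$-matrix. Finally, Lemma~\ref{lemposstable0} applied to $AB$ gives positive semistability.

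I expect the only real obstacle to be packaging rather than depth: recognising that $(AB)^{[2]}$, written as $\overline{\mathbf{L}}^A\,\underline{\mathbf{L}}^B$, is precisely the kind of product of sign-coupled, class-constrained factors to which Lemma~\ref{lem:scyclePmat} applies — the key point being the identity $G_{\overline{\mathbf{L}}^D,\,(\overline{\mathbf{L}}^D)^t}=G^{[2]}_{D,D^t}$ combined with Corollary~\ref{cor:scycles[2]}. This route deliberately avoids trying to establish that $G^{[2]}_{A,B}$ is \emph{odd}$^*$, which would further require ruling out odd intersections of e-cycles in $G^{[2]}_{A,B}$; routing through the ``steady'' condition and Lemma~\ref{lem:scyclePmat} means that harder verification never arises. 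One should also dispatch the degenerate case $n=1$ separately: there $(AB)^{[2]}$ is the empty matrix (vacuously $P_0$) while $AB=\sum_j A_{1j}B_{j1}\ge 0$, each summand being nonnegative.
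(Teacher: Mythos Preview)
Your argument is correct and follows essentially the same route as the paper: both factor $(AB)^{[2]}=\overline{\mathbf{L}}^A\underline{\mathbf{L}}^B$, observe that $G_{\overline{\mathbf{L}}^D,(\overline{\mathbf{L}}^D)^t}=G^{[2]}_{D,D^t}$ is steady for $D\in\mathcal{Q}(C)$ by Corollary~\ref{cor:scycles[2]}, invoke Lemma~\ref{lem:scyclePmat}, and finish with Lemma~\ref{lemposstable0}. The one cosmetic difference is that for the claim that $AB$ itself is $P_0$, the paper uses ``acyclic $\Rightarrow$ odd'' together with Theorem~\ref{mainP0thm}, whereas you use ``acyclic $\Rightarrow$ steady'' together with Lemma~\ref{lem:scyclePmat}; your version is equally valid and has the minor aesthetic virtue of using a single lemma for both halves.
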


\begin{proof}
Since $G_{C,C^t}$ is acyclic, it is odd, and so by Theorem~\ref{mainP0thm} $AB$ is a $P_0$-matrix for each $A, B^t \in \mathcal{Q}_0(C)$. Observe that (i) given any $A \in \mathcal{Q}(C)$, since $G_{A,A^t}$ is acyclic, $G^{[2]}_{A, A^t} = G_{\overline {\mathbf{L}}^A, (\overline {\mathbf{L}}^A)^t}$ is steady by Corollary~\ref{cor:scycles[2]} and (ii) given any $A,B^t \in \mathcal{Q}(C)$, $\overline {\mathbf{L}}^A \in \mathcal{Q}(\overline {\mathbf{L}}^C)$ and $\underline {\mathbf{L}}^B \in \mathcal{Q}((\overline {\mathbf{L}}^C)^t)$. By Lemma~\ref{lem:scyclePmat} (i) and (ii) imply that $(AB)^{[2]} = \overline {\mathbf{L}}^A\underline {\mathbf{L}}^B$ is a $P_0$-matrix for each $A, B^t \in \mathcal{Q}(C)$ and the result extends by closure to $A, B^t \in \mathcal{Q}_0(C)$. Since for each $A, B^t \in \mathcal{Q}_0(C)$ both $AB$ and $(AB)^{[2]}$ are $P_0$-matrices, the conclusion that $AB$ is positive semistable now follows from Lemma~\ref{lemposstable0}.
\end{proof}

\begin{remark}
\label{rem:lemcolun}
Lemma~\ref{lem:columnvec} is clearly a special case of Theorem~\ref{thm:tree}. If $C$ is a column vector then $G_{C, C^t}$ is in fact a star. 
\end{remark}

\begin{remark}
\label{rem:notoddstar}
If $G_{C,C^t}$ is acyclic, this does not imply that $G^{[2]}_{C,C^t}$ is odd$^*$. While most of the examples presented have involved situations where the DSR$^{[2]}$ graph is odd or odd$^*$, Theorem~\ref{thm:tree} illustrates that the DSR$^{[2]}$ graph may provide useful information even in the case where it fails to be odd$^*$. 
\end{remark}

\begin{remark}
\label{rem:notacyclic}
Observe that Theorem~\ref{thm:tree} does not imply that if $G_{A,B}$ is acyclic, then $AB$ is positive semistable. However, given $A,B^t \in \mathbb{R}^{n \times m}$, there exists $C\in \mathbb{R}^{n \times m}$ with $G_{C,C^t}$ acyclic and such that $A \in \mathcal{Q}_0(C)$, $B \in \mathcal{Q}_0(C^t)$ if and only if the underlying undirected graph of $G_{A,B}$ is acyclic (i.e., $G_{A,B}$ has no semicycles). Theorem~\ref{thm:tree} could thus be rephrased: ``If the underlying undirected graph of $G_{A,B}$ is acyclic, then $AB$ and $(AB)^{[2]}$ are $P_0$-matrices and $AB$ is positive semistable.''
\end{remark}

\end{subsection}

\subsection{Limitations of the theory developed so far}

There are a number of ways in which naive application of the theory described here can fail to provide information, even when a set of matrices structurally forbids the passage of a pair of nonreal eigenvalues through the imaginary axis. We illustrate with a couple of examples. Following up on Remark~\ref{rem:notoddstar}, the next example illustrates that even when $G^{[2]}_{A,B}$ is not odd$^*$ and in fact $(AB)^{[2]}$ is not necessarily a $P_0$-matrix, it may still be possible to infer the absence of Hopf bifurcation from the DSR$^{[2]}$ graph:

\begin{example}
\label{exmatchings}
Consider the matrices
\[
A=\left(\begin{array}{ccc}a&0&0\\0&b&0\\0&0&c\end{array}\right)\,, \quad B=\left(\begin{array}{ccc}0&-d&0\\-e&0&f\\g&0&h\end{array}\right)\,,
\]
with $a,b,c,d,e,f,g,h > 0$. $G^{[2]}_{A, B}$ is shown:
\begin{center}
\begin{tikzpicture}
[scale=0.8, place/.style={circle,draw=black!30,fill=black!0 ,thick,inner sep=0pt,minimum size=5.3mm},
transition/.style={rectangle,draw=black!30,fill=black!0,thick,inner sep=0pt,minimum size=5mm},
pre/.style={<,shorten <=2pt,>=stealth',semithick},
post/.style={shorten >=1pt,>=stealth',semithick}]
\node[place] (12) at (-1.732,-1) {$12$};
\node[place] (23) at (1.732,-1) {$23$};
\node[place] (13) at (0, 2) {$13$};
\node[transition] ([13])  at (1.732,1)  {$1^3$}
edge [->, dashed,bend right = 20, post]  (13)
edge [<-, bend left = 20, post]  (23);
\node[transition] ([12])  at (0,- 2) {$1^2$}
edge [->, dashed,bend left = 20, post]  (12);

\node[transition] ([11])  at (-1.732,1) {$1^1$}
edge [<-, dashed, bend right = 20, post]  (12);

\node[transition] ([23])  at (1.5*1.732,1.5*1)  {$2^3$}
edge [<-, bend right = 30, post]  (13)
edge [->, dashed, bend left = 30, post]  (23);
\node[transition] ([22])  at (1.5*0,- 1.5*2) {$2^2$}
edge [<-, bend left = 30, post]  (12)
edge [<-, bend right = 30, post]  (23);
\node[transition] ([21])  at (-1.5*1.732,1.5*1) {$2^1$}
edge [->, bend right = 30, post]  (12)
edge [<-, bend left = 30, post]  (13);

\node[transition] ([33])  at (2*1.732,2*1)  {$3^3$}

edge [<-, dashed,bend right = 40, post] (13);
\node[transition] ([32])  at (2*0,- 2*2) {$3^2$}
edge [<-, dashed, bend left = 40, post] (12)
edge [bend right = 40, post] (23);
\node[transition] ([31])  at (-2*1.732,2*1) {$3^1$}

edge [bend left = 40, post] (13);
\end{tikzpicture}

\end{center}
Labels have been omitted, but it is easy to check that $G^{[2]}_{A, B}$ fails to be odd$^*$ as a consequence of the cycle $C = (13, 2^3, 23, 1^3, 13)$ which is even but, involving $\infty$ labels, not an s-cycle. Indeed $(AB)^{[2]}$ is not necessarily a $P_0$-matrix for all values of the entries. However computation reveals that $(AB)^{[2]}$ is nonsingular and applying Lemma~\ref{J2nonsingular}, $\mathrm{Spec}\,(AB)\backslash\{0\}$ does not intersect the imaginary axis, forbidding Hopf bifurcation. 

In fact, more careful application of the theory developed in \cite{banajicraciun2} allows us to infer from $G^{[2]}_{A, B}$, and without direct computation of $\mathrm{det}((AB)^{[2]})$, that $\mathrm{det}((AB)^{[2]}) > 0$: the argument is only sketched here. There are exactly two cycles in $G^{[2]}_{A, B}$, an odd cycle $D = (12, 3^2, 23, 1^3, 13, 2^1, 12)$ and the even cycle $C$. However $C$ does not contribute any negative terms to the determinant: this follows from the fact that no undirected edges are incident into S-vertex $12$ and consequently there is no set of three R-vertices matched with the three S-vertices in both $G^{[2]}_{A, 0}$ and $G^{[2]}_{0, B}$ in such a way that the union of these matchings includes $C$. See Remark~\ref{rem:P0proof}; see also the discussion on ``good'' cycles \cite{Vazirani} (resp. ``central'' cycles in \cite{RobertsonSeymourThomas}) and Pfaffian orientations. Since $C$ is the only even cycle in $G^{[2]}_{A, B}$ and contributes nothing to the determinant, $\mathrm{det}((AB)^{[2]}) \geq 0$. That $\mathrm{det}((AB)^{[2]}) \neq 0$ follows from the existence of $D$, which traverses each S-vertex and thus contributes a nonzero term to $\mathrm{det}((AB)^{[2]})$. 
\end{example} 

\begin{remark}
\label{rem:adddiag1}
Remark~\ref{rem:adddiag} does not apply to Example~\ref{exmatchings}, since $(AB)^{[2]}$ has not been proved to be a $P_0$-matrix. 
\end{remark}

The next example demonstrates that $(AB)^{[2]}$ may be a $P_0$-matrix for all $B \in \mathcal{Q}_0(A^t)$ although this does not follow from examination of $G^{[2]}_{A,A^t}$ using any of the theory presented so far. It seems likely that this information can indeed be inferred from $G^{[2]}_{A,A^t}$, but that new techniques will be needed. 

\begin{example}
\label{ex4by5}
Consider the matrix
\[
A=\left(\begin{array}{rrrr}1&0&0&0\\-1&1&0&0\\0&-1&1&0\\-1&0&-1&1\\0&0&0&-1\end{array}\right).
\]
Symbolic computation confirms that $(AB)^{[2]}$ is a $P_0$-matrix for arbitrary $B \in \mathcal{Q}_0(A^t)$ . Since $G_{A, A^t}$ is odd, $AB$ is also a $P_0$-matrix, and so, by Lemma~\ref{lemposstable0}, $AB$ is in fact positive semistable. However, neither $G^{[2]}_{A, A^t}$ nor $G^{[2]}_{A^t, A}$ is odd$^*$. The picture on the left shows two e-cycles of $G^{[2]}_{A, A^t}$ with odd intersection. On the right $G^{[2]}_{\tilde A^t, \tilde A}$ is depicted, where $\tilde A$ denotes the submatrix of $A$ obtained by removing its first and last rows. The e-cycles $(13, 2^1, 12, 3^2, 23, 1^3, 13)$ and $(13, 3^1, 14, 1^4, 24, 3^2, 23, 1^3, 13)$ have odd intersection, and $G^{[2]}_{\tilde A^t, \tilde A}$ fails to be odd$^*$.  By Lemma~\ref{lem:leafremove}, $G^{[2]}_{A^t, A}$ is not odd$^*$ either.

\begin{center}
\begin{tikzpicture}
[scale=2.1, place/.style={circle,draw=black!30,fill=black!0,thick,inner sep=0pt,minimum size=5.3mm},
transition/.style={rectangle,draw=black!30,fill=black!0,thick,inner sep=0pt,minimum size=5mm},
place1/.style={circle,draw=black!0,fill=black!0,thick,inner sep=0pt,minimum size=5.5mm},
transition1/.style={rectangle,draw=black!0,fill=black!0,thick,inner sep=0pt,minimum size=5mm},
pre/.style={shorten <=1pt,shorten >=1pt,>=stealth',semithick},
pres/.style={->,shorten <=-1pt,shorten >=-1pt,>=stealth',semithick},
pre1/.style={<-,shorten <=1pt,shorten >=1pt,>=stealth',semithick},
pres1/.style={<-,shorten <=-1pt,shorten >=0pt,>=stealth',semithick}];
\node[place] (15) at (30:1) {$15$};
\node[place] (14) at (90:1) {$14$};
\node[place] (24) at (150:1) {$24$};
\node[place] (34) at (210:1) {$34$};
\node[place] (35) at (270:1) {$35$};
\node[place] (45) at (330:1) {$45$};
\node[place] (25) at (-10:.4) {$25$};
\node[transition] ([42]) at (160:.4) {$4^2$}
edge[pre,bend right=7, dashed, semithick] (25)
edge[pre,bend left=7,semithick] (24);
\node[transition] ([15]) at (0:1) {$1^5$}
edge[pre,bend right=10, dashed,semithick] (15)
edge[pre,bend left=7,semithick] (25)
edge[pre,bend left=10,semithick] (45);
\node[transition] ([41]) at (60:1) {$4^1$}
edge[pre,bend right=10,semithick] (14)
edge[pre,bend left=10, dashed,semithick] (15);
\node[transition] ([14]) at (120:1) {$1^4$}
edge[pre,bend right=10,semithick] (24)
edge[pre,bend left=10, dashed,semithick] (14);
\node[transition] ([24]) at (180:1) {$2^4$}
edge[pre,bend right=10,semithick] (34)
edge[pre,bend left=10, dashed,semithick] (24);
\node[transition] ([43]) at (240:1) {$4^3$}
edge[pre,bend right=10, dashed,semithick] (35)
edge[pre,bend left=10,semithick] (34);
\node[transition] ([35]) at (300:1) {$3^5$}
edge[pre,bend right=10,semithick] (45)
edge[pre,bend left=10, dashed,semithick] (35);
\end{tikzpicture}
\hspace{2cm}
\begin{tikzpicture}
[scale=1.3, place/.style={circle,draw=black!30,fill=black!0 ,thick,inner sep=0pt,minimum size=5.3mm},
place1/.style={circle,draw=white!5,fill=white!9,thick,inner sep=0pt,minimum size=5.3mm},
transition1/.style={rectangle,draw=black!30,fill=black!0,thick,inner sep=0pt,minimum size=5mm},
transition/.style={rectangle,draw=black!30,fill=black!0,thick,inner sep=0pt,minimum size=5mm},
pre/.style={shorten >=2pt,>=stealth',semithick,dashed},
post/.style={shorten >=1pt,>=stealth',semithick}]
\draw[rounded corners=10pt, dashed] (1,1.5) -- (3.3,1.5)--(3.3,0);
\draw[rounded corners=10pt] (1,1.5) -- (-1.25,1.5)--(-1.25,.86);
\draw[rounded corners=10pt] (1,-1.5) -- (3.3,-1.5)--(3.3,0);
\draw[rounded corners=10pt] (1,-1.5) -- (-1.25,-1.5)--(-1.25,-.86);
\node[place,xshift=-1cm] (24) at (0:1) {$24$};
\node[place, xshift=-1cm] (14) at (120:1) {$14$};
\node[place, xshift=-1cm] (34) at (240:1) {$34$};
\node[place, xshift=3cm] (23) at (120:1) {$23$};
\node[place,xshift=3cm] (12) at (240:1) {$12$};
\node[place,xshift=3cm] (13) at (0:1) {$13$};
\node[transition,xshift=-1cm] ([14])  at (60:1)  {$1^4$}
edge [post, bend right=15,semithick] (14)
edge [pre,bend left=15,semithick] (24);
\node[transition,xshift=-1cm,semithick] ([34])  at (180:1)  {$3^4$}
edge [post, bend left=15,semithick] (14)
edge [post, bend right=15,semithick] (34);
\node[transition,xshift=-1cm] ([24])  at (300:1)  {$2^4$}
edge [post, bend right=15,semithick] (24)
edge [pre, bend left=15,semithick] (34);
\node[transition, xshift=3cm] ([32])  at (180:1)  {$3^2$}
edge [post, bend right=15,semithick] (12)
edge [post] (24)
edge [pre,bend left=15] (23);
\node[transition,xshift=3cm] ([21])  at (300:1)  {$2^1$}
edge [pre, bend left=15] (12)
edge [post, bend right=15] (13);
\node[transition,xshift=3cm] ([13])  at (60:1)  {$1^3$}
edge [post,bend left=15] (13)
edge [pre,bend right=15] (23);
\node[transition] ([31])  at (1,1.5)  {$3^1$};
\node[transition] ([33])  at (1,-1.5)  {$3^3$};
\end{tikzpicture}
\end{center}

Notice that although $G^{[2]}_{A, A^t}$ is steady, Lemma~\ref{lem:scyclePmat} cannot be applied since $G^{[2]}_{B^t, B}$ is not in general steady. An interesting feature of this example is that some products of the form $\overline{\mathbf{L}}^A[\alpha|\beta]\underline{\mathbf{L}}^B[\beta|\alpha]$ in (\ref{eqCB2}) contain negative terms, but all such negative terms are cancelled by positive terms arising from other products. 
\end{example} 

The last two examples demonstrate that DSR$^{[2]}$ graphs have more to offer than is explored in this paper. In particular a more complete analysis of projection and lifting may provide new techniques to rule out Hopf bifurcation in families of matrices. 

We conclude by remarking that criteria such as ``$(AB)^{[2]}$ is a $P_0$-matrix'' or even ``$\mathrm{det}((AB)^{[2]}) > 0$'' which forbid the passage of a real eigenvalue of $(AB)^{[2]}$ through zero are sufficient but not necessary to preclude Hopf bifurcation. Indeed a real eigenvalue of $(AB)^{[2]}$ may pass through zero as a consequence of migration of eigenvalues of $AB$ on the real axis without any possibility of Hopf bifurcation. This limitation may be circumvented by resorting to conditions that distinguish imaginary pairs of eigenvalues of $AB$ from pairs of real eigenvalues summing to zero, for example based on subresultant criteria \cite{Guckenheimer1997aa}. Natural extensions of the work here involve exploiting such criteria, and also examining other graphs related to $AB$ whose cycle structure contains information about Hopf bifurcation. 

\end{subsection}

\end{section}

\section*{Acknowledgements}
The authors acknowledge support from Leverhulme grant F/07 058/BU ``Structural conditions for oscillation in chemical reaction networks''. This work was completed during Casian Pantea's stay as a research associate in the Department of Electrical and Electronic Engineering, Imperial College London.

\bibliographystyle{siam}

\end{document}